\newtheorem{teorema}{Theorem}
\newtheorem{defini}[teorema]{Definition}
\newtheorem{lema}[teorema]{Lemma}
\newtheorem{prop}[teorema]{Proposition}
\newtheorem{obs}[teorema]{Remark}
\numberwithin{equation}{section}
\newcommand{\T}{\mathbb{T}}
\begin{document}

\title[On the quadratic NLS in compact manifolds with boundary]{ Local well--posedness for the  quadratic Schr\"odinger equation  in two--dimensional compact manifolds with boundary}
\author{ Marcelo Nogueira}
\address{Department of Mathematics, State University of Campinas, 13083-859, Campinas, SP, Brazil}
\email{marcelonogueira19@gmail.com }
\thanks{M. Nogueira was supported by CNPq, Brazil.}
\author{Mahendra Panthee}
\address{Department of Mathematics, State University of Campinas, 13083-859, Campinas, SP,  Brazil}
\email{mpanthee@ime.unicamp.br}
\thanks{M. Panthee was partially supported by CNPq (308131/2017-7) and FAPESP (2016/25864-6) Brazil.}

\keywords{ Quadratic   Schr{\"o}dinger equation, Initial value problem, local   and global well-posedness, Bilinear estimates, Bourgain's spaces }
\subjclass[2000]{35Q35, 35Q53}

\begin{abstract}
We consider the quadractic NLS posed on a bidimensional compact Riemannian manifold $(M, g)$ with $ \partial M \neq \emptyset$. Using  bilinear and gradient bilinear Strichartz estimates for Schr\"odinger operators in two-dimensional compact manifolds proved by J. Jiang in \cite{JIANG} 
we deduce a new evolution bilinear estimates. Consequently, using Bourgain's spaces,  we obtain a  local well-posedness result for given data $u_0\in H^s(M)$ whenever $s> \frac{2}{3}$ in such manifolds. 
\end{abstract}

\maketitle

\section{Introduction}

 Let $(M,g)$ be a compact Riemannian manifold with boundary of dimension $2$. Denote by $\Delta_{g}$ the Beltrami-Laplace operator with respect to metric $g$ on $M$. We consider the Dirichlet or Neumann problem for the quadratic Schr\"odinger equation  on $M$,
\begin{equation}\label{SDS1}
\begin{cases}
i \partial_{t} u+ \Delta_{g} u =    \alpha  u^{2} + \beta  \overline{u}^2 + \gamma |u|^2,  \qquad{ }   \mbox{in }  [0, \infty) \times (M \setminus \partial M) \\
u(0,x) = u_{0}(x), \\
B u(t, x) = 0, \qquad \mbox{ on } \partial M,
\end{cases}
\end{equation}
where $u = u(t,x)$ is a complex function, $\alpha , \beta , \gamma \in \mathbb{C}$ are complex constants, $B$ is the boundary operator, either $B f = f \mid_{\partial M}$ in the Dirichlet case
or $B f = \partial_{\nu} f \mid_{\partial M}$ in the Neumann case, with $\partial_{\nu}$ denoting the  normal derivative\footnote{We have $\partial_{\nu} u = \frac{\partial u}{\partial \nu} = \langle \nabla u, \nu\rangle$ (Normal component of $\nabla u$), where $\nu$ is the unit outward-pointing normal to $\partial M$.} along the boundary  $\partial M$.

The study of well-posedness issues to initial value problem (IVP) associated with the  nonlinear Schr\"odinger (NLS) equation with quadratic nonlinearities has attracted  attention of several mathematicians in the last decades (see for example \cite {BT2006,KPV96,KIS08} and references therein).  In these works such issues are addressed considering Euclidean spatial domains.  Very few is known when the problem is posed on general manifolds. In this work, we are interested  in addressing the well-posedness issues for the IVP \eqref{SDS1} when $M$ is a compact Riemannian manifold with boundary.

As in the Euclidean case, Strichartz's type inequalities play a vital role while dealing with the well-posedness issues for given data with low Sobolev regularity. In the case of general manifold global in time Strichartz's type inequalities are not available.  In a pioneer work \cite{B1993}, Bourgain obtained local in time Strichartz's type estimate for the NLS equation posed on a standard flat 2-torus $\T^2$. In recent time, much attention has been drawn to find such estimates  with loss of derivative  for  solutions of the linear Schr\"odinger equation
\begin{equation}\label{lin-NLS}
i \partial_{t} u + \Delta_{g} u = 0 ;  \mbox{ } u(x, 0) = f(x),
\end{equation}
 posed on general manifolds of dimension $d \geq 2$. In  such spaces, these estimates are given by 
 \begin{equation}\label{Strichartz}
  \|e^{it \Delta} f \|_{L^{p}(I; L^{q}(M))} \leq C(d, p, q, I) \|f\|_{H^{\ell}(M)},   
 \end{equation}
 for some  $0 \leq \ell < \frac{2}{p}$ where $(p,q)$ is a $d$- admissible pair, i.e., $2/p + d/q = d/2$ with $q < \infty$ and $I \subset \mathbb{R}$ is a finite interval. The number $\ell$ depends on the geometry of $M$ and is called the loss of regularity index. In the flat case, $M = (\mathbb{R}^d, \delta_{ij})$ we have $\ell = 0$ and one can take $I = \mathbb{R}$. For a complete discussion we refer the readers to \cite{LP2015B,TAO2006B} and references therein. 
 
  For a pioneer work concerning the  Strichartz's type  estimate  considering non-Euclidean geometries, we refer to  \cite{BGT}, where a local in time version of \eqref{Strichartz} with loss of derivative ($\ell = 1/ p$) for the solution of \eqref{lin-NLS} posed on compact manifold without boundary was derived. Such estimates for the NLS equation posed on manifolds with boundary can be found in \cite{BSS2008} and \cite{BSS2012}.

A powerful refinement of the estimate \eqref{Strichartz} is known as bilinear Strichartz's type estimate
\begin{equation}\label{BilinearEstimates}
\left(\int_{[0,1] \times M} |e^{it \Delta} f(x) \mbox{ } e^{it \Delta} h(x)|^{2} dt dx\right)^{\frac{1}{2}} \leq C (\min (\Gamma, \Lambda))^{s} \|f\|_{L^{2}} \|h\|_{L^{2}},
\end{equation}
that  hold for  $s > s_{0}(M)$ and spectrally localized  $f$, $h$  in
dyadic intervals of order $\Gamma, \Lambda$ respectively, i.e., 
\[
\textbf{1}_{\Lambda \leq \sqrt{- \Delta} \leq 2 \Lambda}(f) = f, \qquad{ }\textbf{1}_{\Gamma \leq \sqrt{- \Delta} \leq 2 \Gamma}(h) = h.
\]

The claim that the bilinear version \eqref{BilinearEstimates} is a  refinement of \eqref{Strichartz} can be 
justified with the help of the following remark. If $d = 2$, then $(4, 4)$  is a $2$- admissible pair. Considering $h = f$ in \eqref{BilinearEstimates}, we obtain 
\[
\|e^{it \Delta} f \|_{L^4([0,1] \times M)}^2 \lesssim \Lambda^s \| f \|_{L^2(M)}^2,
\]
and consequently
\[
\|e^{it \Delta} f \|_{L^4([0,1] \times M)} \lesssim  \| f \|_{H^{s/2}(M)}.
\]
The estimates \eqref{BilinearEstimates} 
has proven to be one of the most important tools  to obtain the local well-posedness results. More precisely, taking $M = \mathbb{S}^{2}$  endowed with its standard metric, using the precise knowledge of its spectrum $\{\lambda _{k} = k (k + 1) \}_{k \in \mathbb{N}}$  and estimates about spectral projectors of the form
\[
\chi_{\lambda} f =  \sum_{k} \chi(\lambda_{k} - \lambda) P_{k}f,
\]
acting on functions $f$ over $M$, where $\chi \in C^{\infty}_{0}(\mathbb{R})$, Burq, G\'erard and Tzvetkov  in \cite{BGT2},   showed that \eqref{BilinearEstimates}  is true for every $s > s_{0}(M) := \frac{1}{4}$. The authors in \cite{BGT2} also proved the validity of \eqref{BilinearEstimates}  for  bidimensional Zoll manifolds in same range of $s$.

In the case of manifolds with boundary,  $\partial M \neq \emptyset$,  we do not have the precise knowledge of the eigenvalues as  in the cases of 
flat torus and sphere, where we know eigenvalues of the Laplacean precisely. In these cases it is possible  to use  the arithmetic
property of these eigenvalues. For general manifolds with boundary, our poor knowledge of spectrum does not allow us to use the same technique.

 Recently,  Anton \cite{RAMONA} considered manifolds with boundary, $M= \mathbb{B}^{3}$ (the three dimensional ball)  and proved  \eqref{BilinearEstimates} and the following estimate involving gradient
\begin{equation}\label{GradientBilinearEstimate}
   \| (\nabla_{x} ( e^{i t \Delta}f) ) \mbox{ } e^{i t \Delta} h \|_{L^{2}([0,1] \times M)} \leq C\, \Lambda\,(\min (\Lambda, \Gamma))^{s} \| f \|_{L^{2}(M)} \| h \|_{L^{2}(M)}
\end{equation}
for  $s_{0}(\mathbb{B}^{3}) = \frac{1}{2}$. Using  these bilinear estimates, the authors in  \cite{RAMONA}, \cite{BGT2}, obtained local and  well-posedness results for the nonlinear cubic Schr\"odinger equations for initial data in $H^{s}(M)$ for $s > s_{0}(M)$ on such manifolds. Observe that the author  in  \cite{RAMONA} proved the local well-posedness result for the cubic nonlinear  Schr\"odinger equation with Dirichlet boundary condition and radial data in $H^{s}$ for every $s > \frac{1}{2}$. Later, Jiang in \cite{JIANG} considered two dimensional compact manifold with boundary and showed  validity of the  estimates  \eqref{BilinearEstimates} and  \eqref{GradientBilinearEstimate}  for $s > s_{0}(M^{2}) = \frac{2}{3}$ and consequently  obtained local well-posedness of the cubic NLS in $H^s(M)$, $s>\frac23$. 

In this work, we use the techniques used in \cite{RAMONA} and \cite{JIANG} to get crucial bilinear estimates corresponding to the quadratic NLS \eqref{SDS1} posed on a two dimensional compact manifold with boundary and prove the following local well-posedness result.

\begin{teorema}\label{local-Th}
Let $(M, g)$ be a two dimensional compact manifold with boundary. For any $u_{0} \in H^{s}(M) $, with
 $s>s_{0}:= \frac{2}{3}$, there exist $T = T(\|u_0\|_{H^{s}(M)}) > 0$
and a unique solution $u(t)$ of the initial value problem \eqref{SDS1}, on the time interval $[0,T]$, such that
\begin{enumerate}
\item[$(i)$] $ u \in X^{s,b}(M);$
\item[$(ii)$] $ u \in C([0, T], H^{s}(M);$
\end{enumerate}
 for suitable $b$ close to $\frac{1}{2}+$. Moreover the map $u_{0} \mapsto u(t)$ is locally Lipschitz
 from $ H^{s}(M)$ into $C([0, T], H^{s}(M))$.
\end{teorema}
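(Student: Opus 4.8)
The plan is to run a standard contraction-mapping argument in Bourgain's spaces $X^{s,b}(M)$ adapted to the manifold $M$. First I would set up the functional framework: define $X^{s,b}(M)$ using the spectral decomposition of $-\Delta_g$ with the relevant boundary condition $B$ (Dirichlet or Neumann), namely $\|u\|_{X^{s,b}}^2 = \sum_k \langle \lambda_k\rangle^{2s}\int_\R \langle \tau + \lambda_k\rangle^{2b}|\widehat{u_k}(\tau)|^2\,d\tau$ where $u_k$ is the projection of $u$ onto the $k$-th eigenspace; record the standard embedding $X^{s,b}\hookrightarrow C([0,T],H^s)$ for $b>\tfrac12$, the linear estimates $\|\psi(t)e^{it\Delta_g}u_0\|_{X^{s,b}}\lesssim \|u_0\|_{H^s}$ and the Duhamel (inhomogeneous) estimate $\|\psi(t)\int_0^t e^{i(t-t')\Delta_g}F(t')\,dt'\|_{X^{s,b}}\lesssim \|F\|_{X^{s,b-1}}$ together with the time-localization gain $\|\psi_T u\|_{X^{s,b'}}\lesssim T^{b-b'}\|u\|_{X^{s,b}}$ for $b'<b$, where $\psi,\psi_T$ are suitable cutoffs. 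These are all classical once the bilinear estimate is in hand.

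The heart of the matter is the trilinear-free, purely bilinear nonlinear estimate: for $s>\tfrac23$ there exist $b>\tfrac12$ and $b'<\tfrac12$ (with $b+b'<1$) such that
\begin{equation*}
\|uv\|_{X^{s,-b'}(M)} \lesssim \|u\|_{X^{s,b}(M)}\|v\|_{X^{s,b}(M)},
\end{equation*}
and the analogous bound with $u$ or $v$ replaced by $\bar u$, $\bar v$ — this covers all three nonlinear terms $\alpha u^2+\beta\bar u^2+\gamma|u|^2$ since complex conjugation only reflects the modulation variable $\tau\mapsto-\tau$ and leaves the $X^{s,b}$-norm invariant. I would prove this by dyadic decomposition: write $u=\sum_N u_N$, $v=\sum_L v_L$ with $u_N,v_L$ spectrally localized at frequencies $\sim N,\sim L$, reduce by symmetry to $L\le N$, and pair against a test function $w$ localized at frequency $\sim K$. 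The output frequency $K$ cannot exceed $\sim N$ (it is $\lesssim N$), so the derivative loss $\langle K\rangle^s\lesssim N^s$ must be absorbed. One then estimates $\int uv\bar w$ by Cauchy–Schwarz in spacetime and invokes Jiang's bilinear Strichartz estimate \eqref{BilinearEstimates} (and, when one factor carries the output derivative and must be paired with a gradient, the gradient bilinear estimate \eqref{GradientBilinearEstimate}) to bound $\|u_N v_L\|_{L^2([0,1]\times M)}\lesssim L^{s_0+}\|u_N\|_{L^2}\|v_L\|_{L^2}$ for any $s_0>\tfrac23$; transferring from free evolutions to $X^{s,b}$ functions costs the usual $b>\tfrac12$ via the transference principle $\|F\|_{L^2_{t,x}}\lesssim \|F\|_{X^{0,b}}$ together with a Fubini/averaging argument over the modulation. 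Summing the resulting geometric series in $N$ and $L$ requires the small power $s-s_0>0$ to beat the $L^{s_0}$ (and the $N^s$ versus the gain) — this is exactly where $s>\tfrac23$ enters and forces the endpoint to be excluded.

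With the bilinear estimate established, I would close the argument in the usual way: on the ball $\{\|u\|_{X^{s,b}}\le R\}$ with $R\sim\|u_0\|_{H^s}$, define the map
\begin{equation*}
\Phi(u)(t)=\psi(t)e^{it\Delta_g}u_0 - i\,\psi_T(t)\int_0^t e^{i(t-t')\Delta_g}\bigl(\alpha u^2+\beta\bar u^2+\gamma|u|^2\bigr)(t')\,dt',
\end{equation*}
and use the linear, Duhamel, time-localization and bilinear estimates to get $\|\Phi(u)\|_{X^{s,b}}\le C\|u_0\|_{H^s} + CT^{\theta}\|u\|_{X^{s,b}}^2$ and a matching difference estimate $\|\Phi(u)-\Phi(v)\|_{X^{s,b}}\le CT^{\theta}(\|u\|_{X^{s,b}}+\|v\|_{X^{s,b}})\|u-v\|_{X^{s,b}}$ for some $\theta=\theta(b,b')>0$; choosing $T=T(\|u_0\|_{H^s})$ small makes $\Phi$ a contraction, giving existence and uniqueness in the ball, and a standard argument upgrades uniqueness to the whole space. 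The embedding $X^{s,b}\hookrightarrow C([0,T],H^s)$ gives $(ii)$, and the same difference estimate gives the locally Lipschitz dependence of $u_0\mapsto u$. The main obstacle, and the only genuinely new ingredient over \cite{JIANG}, is the bilinear $X^{s,b}$ estimate for the non-resonant product $uv$ (as opposed to the cubic $|u|^2u$): one must verify that Jiang's bilinear and gradient bilinear Strichartz inequalities are strong enough to control the worst frequency interaction $K\sim N\gg L$ in the quadratic term, where the full output derivative $N^s$ lands on the high-frequency factor and one has only the single low-frequency gain $L^{s_0+}$ to spend — I expect this to go through precisely because $s_0=\tfrac23$ and one can always route the derivative onto a gradient factor and use \eqref{GradientBilinearEstimate}, but checking the bookkeeping on all frequency regimes is the technical crux.
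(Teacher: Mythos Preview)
Your overall strategy (contraction in $X^{s,b}$ via a bilinear product estimate) is the same as the paper's, but there is a genuine gap in your proposed proof of the bilinear estimate, and it is exactly the place where the gradient bilinear estimate is actually needed.

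The claim ``the output frequency $K$ cannot exceed $\sim N$'' is false on a general compact manifold. Unlike $\mathbb{R}^d$ or $\mathbb{T}^d$, there is no convolution structure for the spectral decomposition of $-\Delta_g$: if $u_N$ and $v_L$ are spectrally localized at $\sim N$ and $\sim L$, the product $u_N v_L$ can have nontrivial spectral projections at arbitrarily high frequencies $K\gg N$. This regime (in the paper's notation $N_0 > C N_1$, with $N_0$ the test-function frequency and $N_1=\max(N,L)$) cannot be dismissed; it must be estimated. The paper handles it by writing the high-frequency test piece as $w_{N_0}=-N_0^{-2}\Delta_g(Tw_{N_0})$ and integrating by parts via Green's formula (boundary terms vanish under either Dirichlet or Neumann conditions), so that $\Delta_g$ falls on the product $u_{N_1}v_{N_2}$. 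Expanding $\Delta_g(fg)=g\Delta_g f+f\Delta_g g+2(\nabla_g f,\nabla_g g)_g$ yields three terms, and it is the cross term that requires Jiang's \emph{gradient} bilinear estimate; the payoff is a factor $(N_1/N_0)^2$ making the sum over $N_0$ convergent. So the role of the gradient estimate is not to ``route the output derivative onto a gradient factor'' in the interaction $K\sim N\gg L$ --- that case is handled by the ordinary bilinear estimate alone, since $K^s\sim N^s$ is absorbed by $\|u_N\|_{X^{s,b}}$ and the low-frequency gain $L^{s_0+}$ suffices once $s>s_0$ --- but to control the genuinely non-Euclidean regime $K\gg N$.

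A second, more technical, omission: directly transferring Jiang's bilinear Strichartz estimate places all three factors in $X^{0,b}$ with $b>\tfrac12$, which by duality means the test function sits in $X^{-s,b'}$ with $b'>\tfrac12$, and then $1-b-b'<0$ gives no positive power of $T$. The paper fixes this by also proving a crude bound $|I(N)|\lesssim N_2^{3/2}\prod_j\|u^{(j)}_{N_j}\|_{X^{0,1/6}}$ via the Sobolev embedding $H^{3/2}(M)\hookrightarrow L^\infty(M)$ together with $X^{0,1/6}\hookrightarrow L^3_tL^2_x$, and then interpolating with the bilinear-Strichartz bound to land at some $b'<\tfrac12$ at the price of an arbitrarily small loss in the exponent of $N_2$ (their Lemma on the choice of interpolation parameters). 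Without this interpolation step the contraction does not close.
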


Having proved the local well-posedness of the IVP \eqref{SDS1} in Theorem \ref{local-Th}, a natural question to ask is about the global well-posedness. Generally, conserved quantities play a vital role to answer such question. Recall that, one of the important properties of solutions of the nonlinear Schr\"odinger equations with nonlinearity of the form $N_{p}(u) := |u|^{p-1} u$ $(p>1)$  is that mass and energy of the solutions are conserved (at least for smooth solutions) by the flow. However, in the case of \eqref{SDS1}, by multiplying the equation by $\overline{u}$, integrating and taking the imaginary part, one can easily conclude that the mass is conserved during evolution of system \eqref{SDS1} if 
\begin{equation}\label{MassCond}
Im ( \alpha u^{2} \overline{u} + \beta \overline{u}^{3} + \gamma |u|^{2} \overline{u} ) = 0.    
\end{equation}
For instance, the condition \eqref{MassCond} is satisfied if we take  $\alpha = \gamma \in \mathbb{R}$ and $\beta = 0$. On the other hand, it is much more difficult to obtain a condition for energy conservation. In fact, by multiplying the equation \eqref{SDS1} by $\overline{\partial_{t}u}$ integrating and taking the real  parts we obtain that 
\begin{equation}\label{EnergyCond}
    - \int_{M} \partial_{t}(|\nabla u|^{2}_{g}) dx = \int_{M} 2 Re [\overline{\partial_{t}u} ( \alpha  u^{2} + \beta  \overline{u}^2 + \gamma |u|^2)] dx.
\end{equation}
Looking at the RHS of \eqref{EnergyCond}, we see 
that it is  nontrivial to rewrite it as a derivative of a function involving $u$.  This is one of the main differences between  the equation \eqref{SDS1} and the NLS equation with nonlinearity $N_{p}(u)$. This fact constitutes  an obstacle in the development of a global well-posedness theory for the equation under investigation in this work. However, in some cases where  one has  non-trivial pertubations of the NLS equation,  it is possible to obtain an \textit{a priori} estimate which leads to global well-posedness results (see for instance \cite{NP2020}). 

Before leaving this section, we record some notations that will be used throughout this work. We write $A \sim B$ if there are constants $c_{1}, c_{2} > 0$ such that
$A \leq c_{1} B$ and $B \leq c_{2} A$. Throughout this work we will denote dyadic numbers
$2^{m}$ for $m \in \mathbb{N}$ by capital letters, e.g. $N = 2^{n}, L = 2^{l}, \ldots$. The letter
$C$ will be used to denote a positive constant that may vary from line to line. We use $\|\cdot\|_{L^{p}}$
to denote $L^{p}(M)$ norm and $(\cdot, \cdot)_{L^{2}}$ to denote the inner product in $L^{2}(M)$. Moreover, we denote by $\langle x \rangle :=  \sqrt{1 + x^{2}}$.

\section{Function Spaces and Preliminary Results}

\subsection{Spectral properties of the Laplace-Beltrami operator}
Consider the following eigenvalues problems when $M$ is compact:
\begin{enumerate}
\item[$i)$] Closed problem $-\Delta_{g} f = \lambda f$ in $M$; $\partial M = \emptyset$;
\item[$ii)$] Dirichlet problem $-\Delta_{g} f = \lambda f$ in $M$;  $f|_{\partial M} = 0$;
\item[$iii)$] Neumann problem $-\Delta_{g} f = \lambda f$ in $M$; $ \partial_{\nu} f|_{\partial M} = 0$. 
\end{enumerate}
In our case, we will deal with $(ii)$ and $(iii)$. We have the following standard result about the spectrum.
\begin{teorema}\label{SpectrumOnManifolds}
Let $M$ be a compact manifold with boundary  $\partial M$ (eventually empty), and consider
one of the above mentioned eigenvalue problems. Then:
\begin{enumerate}
\item[$i)$] The set of eigenvalue consists of an infinite sequence \footnote{Sometimes we denote the eigenvalues by $\lambda_{\ell} = \mu_{\ell}^{2}$. }
\[
0 = \lambda_{0} < \lambda_{1}\leq \lambda_{2}\leq ...\leq \lambda_{\ell} \leq \lambda_{\ell + 1} \leq \ldots \rightarrow + \infty,
\]
where $0$ is not an eigenvalue in the Dirichlet problem.
\item[$ii)$] Each eigenvalue has finite multiplicity and the eigenspaces $\mathcal{E}_{j}:= \{u \mid - \Delta_{g} u = \lambda_{j} u \}$ corresponding to distinct eigenvalues are $L^{2}(M)$-orthogonal;
\item[$iii)$]The direct sum of the eigenspaces $\mathcal{E}_{j}$ is dense in $L^{2}(M)$ for the $L^{2}$-norm topology. Furthemore, each eigenfunction
is $C^{\infty}$- smooth and analytic.
\end{enumerate}
\end{teorema}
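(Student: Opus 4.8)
The plan is to realize $-\Delta_g$, equipped with the chosen boundary condition, as a non-negative self-adjoint operator with compact resolvent and then apply the spectral theorem for compact self-adjoint operators. First I would introduce the Dirichlet quadratic form $Q(u,v) = \int_M \langle \nabla u, \nabla v \rangle_g \, dx$ with form domain $H^1_0(M)$ in the Dirichlet case $(ii)$ and $H^1(M)$ in the Neumann case $(iii)$ (and in the closed case $(i)$). This form is symmetric, non-negative, densely defined and closed, so by the representation (Friedrichs) theorem it is associated with a unique self-adjoint operator $A = -\Delta_g$ whose domain encodes the boundary condition — the essential point being that in case $(iii)$ the Neumann condition $\partial_\nu f|_{\partial M} = 0$ arises as the \emph{natural} boundary condition of $Q$ on $H^1(M)$ rather than being imposed by hand. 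Since $M$ is compact, the form domain embeds compactly into $L^2(M)$ by the Rellich–Kondrachov theorem, hence $(A+1)^{-1}\colon L^2(M)\to L^2(M)$ is a compact, self-adjoint, injective operator (here $-1\in\rho(A)$ because $A\ge 0$).

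Next I would apply the spectral theorem for compact self-adjoint operators to $(A+1)^{-1}$: its spectrum is a sequence of real eigenvalues of finite multiplicity accumulating only at $0$, with an associated orthonormal basis of $L^2(M)$ consisting of eigenvectors. Translating back through $A$ yields eigenvalues $\lambda_\ell \to +\infty$, each of finite multiplicity, whose eigenspaces $\mathcal{E}_j$ have algebraic span dense in $L^2(M)$; this gives $(iii)$ except for smoothness. The $L^2$-orthogonality of $\mathcal{E}_j$ and $\mathcal{E}_k$ for $\lambda_j\neq\lambda_k$ is immediate from self-adjointness, since $\lambda_j(u,v)_{L^2} = (Au,v)_{L^2} = (u,Av)_{L^2} = \lambda_k(u,v)_{L^2}$, giving $(ii)$. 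For the ordering and the bottom of the spectrum in $(i)$: integrating by parts, $\lambda_\ell \|f\|_{L^2}^2 = Q(f,f)\ge 0$, so every eigenvalue is non-negative; $\lambda = 0$ forces $\nabla f \equiv 0$, hence $f$ locally constant. In the Dirichlet case this is ruled out by $f|_{\partial M}=0$, so $\lambda_0 > 0$ (i.e., $0$ is not an eigenvalue); in the Neumann and closed cases the constant function is an eigenfunction, so $\lambda_0 = 0$, and by connectedness of $M$ it is simple.

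Finally, for the smoothness and analyticity claim in $(iii)$ I would invoke interior and boundary elliptic regularity. An $L^2$ eigenfunction solves the elliptic equation $(-\Delta_g - \lambda)f = 0$, so the a priori estimates for $-\Delta_g$ give the bootstrap $f\in H^k_{\mathrm{loc}} \Rightarrow f\in H^{k+2}_{\mathrm{loc}}$, whence $f\in C^\infty$ in the interior; combined with the boundary regularity theory appropriate to the operator $B$ (Dirichlet or Neumann), one gets $f\in C^\infty(M)$ up to the boundary. Real-analyticity then follows from the analytic hypoellipticity of $-\Delta_g-\lambda$, an elliptic operator with real-analytic coefficients on the (assumed real-analytic) manifold, together with analyticity up to a real-analytic boundary carrying analytic boundary conditions.

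I expect the main obstacle not to be any single classical step but the bookkeeping around the boundary: correctly identifying the operator domain produced by the form $Q$ in each of the Dirichlet and Neumann cases, and quoting the precise version of elliptic regularity up to the boundary compatible with each boundary operator $B$. The positivity and orthogonality statements, by contrast, reduce to short integration-by-parts computations.
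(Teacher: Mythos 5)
Your proposal is correct: the paper gives no proof of its own here, merely citing B\'erard, and your argument (Friedrichs extension of the Dirichlet/Neumann form, compact resolvent via Rellich--Kondrachov, the spectral theorem for compact self-adjoint operators, and elliptic regularity for smoothness) is precisely the standard proof that the citation points to. You also correctly flag the two hypotheses the statement leaves implicit --- connectedness of $M$ for the simplicity of $\lambda_{0}=0$, and real-analyticity of the metric for the analyticity of eigenfunctions --- so nothing further is needed.
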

\begin{proof}
See \cite{PIERREBERARD}, p.$53$.
\end{proof}

From now on, we will list the eigenvalues of the problems $(ii)$ and $(iii)$  as
\[
(0 \leq) \lambda_{1} \leq \lambda_{2} \leq \lambda_{3} \leq \ldots\rightarrow +\infty,
\]
with each eigenvalue repeat a number of times equal to its
multiplicity. The third assertion in Theorem $\ref{SpectrumOnManifolds}$ shows that the sequence $\{e_{j} \}_{j \in \mathbb{N}}$ is an orthonormal basis
of $L^{2}(M)$. For any $f \in L^{2}(M)$, one can write $f = \sum_{j} (f \mid e_{j})_{L^{2}} e_{j}$ in $L^{2}$- sense. We finish this subsection by introducing a spectral projector operator. For a dyadic number $\Gamma$, we use $\textbf{1}_{\Lambda \leq \sqrt{- \Delta} \leq 2 \Lambda}$ to denote the spectral projector
$$\sum_{\Lambda\leq\mu_j\leq 2\Lambda}P_j f=\sum_{\Lambda\leq\mu_j\leq 2\Lambda}e_j\int_Mf\bar{e_j}\,dx,$$
where $\mu_j^2=\lambda_j$ are eigenvalues corresponding to eigenvectors $e_j$ of $-\Delta=-\Delta_g$.

\subsection{Function spaces}
\begin{defini}\label{def-HsXs}
Let $(M,g)$ be a compact Riemannian manifold, and consider the Laplace-Beltrami operator $ -\Delta: = - \Delta_{g}$ on $M$. Let $(e_{k})$ be an
$L^{2}$ orthonormal basis formed by the eigenfunctions of $-\Delta$, with eigenvalues $\lambda_{k}:= \mu_{k}^{2}$. Let $P_{k}$ be the orthogonal
projector along $e_{k}$. For $s \geq 0$ we define  the natural Sobolev space  generated by $(1 - \Delta)^{\frac{1}{2}}$,  $H^{s}(M)$, equipped with the following norm\footnote{For $s < 0$ we define $H^{s}(M)$ as the closure of $L^{2}(M)$ under the norm $(\ref{HsNorm})$.}
\begin{equation}\label{HsNorm}
\|u\|_{H^{s}(M)}^{2} :=\|(1 - \Delta)^{\frac{s}{2}} u\|_{L^{2}(M)} =  \sum_{k} \langle \mu_{k}\rangle^{s} \|P_{k} u\|_{L^{2}(M)}^{2}.
\end{equation}

We define the Hilbert spaces  $X^{s,b}(\mathbb{R}\times M)$ as the completion of $C_{0}^{\infty}(\mathbb{R} \times M)$ with respect to the norm
\begin{equation}\label{XsbNorm}
 \|u\|^{2}_{X^{s,b}(\mathbb{R} \times M)} = \sum_{k} \| \langle \tau  + \mu_{k} \rangle^{b} \langle \mu_{k}\rangle^{\frac{s}{2}} \widehat{P_{k} u}(\tau)\|_{L^{2}(\mathbb{R};L^{2}(M))}^{2}
= \| S(-t) u(t, .)\|_{H^{b}(\mathbb R_{\tau}; H^{s}(M))}^{2},
\end{equation}
where $\widehat{P_{k} u}(\tau)$ denotes the Fourier transform of $P_{k}u$ with respect to the time variable.
\end{defini}

\begin{prop}\label{BasicXsb} The following properties are valid 
\begin{enumerate}
\item[(i)] For  $s_{1} \leq s_{2}$ and $b_{1} \leq b_{2}$, one has  $X^{s_{2},b_{2}}(\mathbb{R}\times M) \hookrightarrow X^{s_{1},b_{1}}(\mathbb{R}\times M)$.
\item[(ii)] $X^{0,\frac{1}{6}}(\mathbb{R}\times M) \hookrightarrow L^{3}(\mathbb{R}, L^{2}(M))$. 
\item[(iii)] If  $b > \frac{1}{2}$,  then the inclusion $X^{s,b}(\mathbb{R} \times M) \hookrightarrow C(\mathbb{R}, H^{s}(M))$ holds.
\end{enumerate}
\end{prop}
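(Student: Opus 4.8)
The plan is to treat the three items separately; each follows from standard structural facts about the unitary Schr\"odinger group $S(t) = e^{it\Delta_g}$ together with one-dimensional (time) Sobolev embeddings, after transferring everything through the identity $\|u\|_{X^{s,b}(\mathbb R\times M)} = \|S(-t)u\|_{H^b(\mathbb R_\tau; H^s(M))}$ recorded in Definition \ref{def-HsXs}. For item (i) I would argue directly from the series definition \eqref{XsbNorm}: since $s_1\le s_2$, $b_1\le b_2$ and $\langle\mu_k\rangle\ge 1$, $\langle\tau+\mu_k\rangle\ge 1$, we have the pointwise (in $\tau$ and $k$) bounds $\langle\mu_k\rangle^{s_1/2}\le\langle\mu_k\rangle^{s_2/2}$ and $\langle\tau+\mu_k\rangle^{b_1}\le\langle\tau+\mu_k\rangle^{b_2}$; summing over $k$ and integrating in $\tau$ yields $\|u\|_{X^{s_1,b_1}}\le\|u\|_{X^{s_2,b_2}}$, hence the continuous inclusion.

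For item (ii) I would set $v=S(-t)u$. Since $S(t)$ is unitary on $L^2(M)$ for each fixed $t$, one has $\|u(t,\cdot)\|_{L^2(M)}=\|v(t,\cdot)\|_{L^2(M)}$ for every $t$, so $\|u\|_{L^3(\mathbb R;L^2(M))}=\|v\|_{L^3(\mathbb R;L^2(M))}$; since also $\|v\|_{H^{1/6}(\mathbb R_\tau;L^2(M))}=\|u\|_{X^{0,1/6}}$, it suffices to prove the vector-valued Sobolev embedding $H^{1/6}(\mathbb R;L^2(M))\hookrightarrow L^3(\mathbb R;L^2(M))$. This is the one-dimensional endpoint $H^{\sigma}(\mathbb R)\hookrightarrow L^q(\mathbb R)$ with $\sigma=\tfrac12-\tfrac1q=\tfrac16$, which remains valid for Hilbert-space-valued functions: one decomposes $v$ into Littlewood--Paley pieces in the time-frequency $\tau$, applies Bernstein's inequality $\|\Delta_j v\|_{L^3_tL^2_x}\lesssim 2^{j/6}\|\Delta_j v\|_{L^2_tL^2_x}$ to each piece and sums, or equivalently reproduces the scalar Hardy--Littlewood--Sobolev / fractional-integration proof, which is insensitive to the target being a Hilbert space.

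For item (iii), again with $v=S(-t)u$ and $b>\tfrac12$, the Cauchy--Schwarz inequality gives
\[
\int_{\mathbb R}\|\widehat v(\tau,\cdot)\|_{H^s(M)}\,d\tau\le\Big(\int_{\mathbb R}\langle\tau\rangle^{-2b}\,d\tau\Big)^{1/2}\,\|v\|_{H^b(\mathbb R_\tau;H^s(M))}<\infty,
\]
so $\widehat v\in L^1(\mathbb R;H^s(M))$ and, by Fourier inversion in time, $v\in C_b(\mathbb R;H^s(M))$ with $\sup_t\|v(t)\|_{H^s(M)}\lesssim\|u\|_{X^{s,b}}$. Next, from the spectral decomposition of Theorem \ref{SpectrumOnManifolds} and Definition \ref{def-HsXs} one checks that $S(t)$ is a strongly continuous unitary group on each $H^s(M)$; hence $u(t)=S(t)v(t)$ is continuous from $\mathbb R$ into $H^s(M)$, via the splitting $\|S(t)v(t)-S(t_0)v(t_0)\|_{H^s}\le\|v(t)-v(t_0)\|_{H^s}+\|(S(t)-S(t_0))v(t_0)\|_{H^s}$, and $\|u(t)\|_{H^s}=\|v(t)\|_{H^s}$ gives the uniform bound. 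Since all of the above estimates are proven for $u\in C_0^\infty(\mathbb R\times M)$, which is dense in $X^{s,b}(\mathbb R\times M)$, the embeddings extend to the full completion by continuity.

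\textbf{Expected main obstacle.} There is no substantial difficulty: these are the standard mapping properties of the $X^{s,b}$ spaces. The only points deserving care are the sharpness of the time-Sobolev exponent in (ii) (that $\tfrac16=\tfrac12-\tfrac13$ is exactly the endpoint for the $L^3$ target) and the verification that $e^{it\Delta_g}$ is genuinely a strongly continuous unitary group on every $H^s(M)$ --- both immediate from the spectral data above --- together with the routine density argument needed to pass from $C_0^\infty(\mathbb R\times M)$ to the completion defining $X^{s,b}$.
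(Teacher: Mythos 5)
Your argument is correct and follows essentially the same route as the paper's (very terse) proof: item (i) from monotonicity of the weights in the defining norm \eqref{XsbNorm}, item (ii) from the identity $\|u\|_{X^{0,b}}=\|S(-t)u\|_{H^{b}(\mathbb{R};L^{2}(M))}$ combined with the Sobolev embedding $H^{1/6}(\mathbb{R})\hookrightarrow L^{3}(\mathbb{R})$ (in its Hilbert-space-valued form), and item (iii) from the same transference plus $H^{b}(\mathbb{R})\hookrightarrow C_{b}(\mathbb{R})$ for $b>\tfrac12$. You have simply supplied the standard details (unitarity of $S(t)$, strong continuity of the group on $H^{s}(M)$, and the density argument) that the paper leaves implicit.
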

\noindent
\begin{proof}The part $(i)$ follows directly from the definition of the $X^{s,b}$-norm in \ref{XsbNorm}. The part $(ii)$ follows
from the fact that $u \in X^{s, b}(\mathbb{R} \times M)$, if  and only if,  $S(-t) u (t, \cdot) \in H^{b}(\mathbb{R}, H^{s}(M))$
and from the immersion $H^{1/6}(\mathbb{R}) \hookrightarrow L^{3}(\mathbb{R}) $.
The proof of $(iii)$, is also a consequence of  \eqref{XsbNorm}.
\end{proof}

In order to use a contraction mapping argument to obtain local existence, we need to define local in time version of $X^{s,b}$.

\begin{defini}
For every compact interval $I \subset \mathbb{R}$, we define the restriction space $X^{s,b}(I \times M)$ as the
space of functions $u$ on $I \times M$ that admit extensions to $\mathbb{R} \times M$ in $X^{s, b}(\mathbb{R} \times M)$. The space $X^{s,b}(I \times M)$ is equipped
with the restriction norm
\[
\|u\|_{X^{s,b}(I \times M)} = \inf_{w \in X^{s,b}(\mathbb{R} \times M)} \{ \|w\|_{X^{s,b}(\mathbb{R} \times M)} \mid w = u \mbox{ on } I\}.
\]
\end{defini}

Another property we are going to use frequently refers to the dyadic decompositions and their relation to the norm of the $X^{s,b}$ spaces. More explicitly,  considering $u$, we can decompose with respect to the space variables as
\[
 u = \sum_{N} u_{N} = \sum_{N} \textbf{1}_{\sqrt{- \Delta} \sim N} (u)
\]
where $N$ denotes the sequence of dyadic integers. Using the definition of the operator $\textbf{1}_{\sqrt{- \Delta} \sim N}$ we can establish the norm equivalence relation
\begin{equation}\label{A1}
 \|u\|_{X^{s,b}}^{2} \sim \sum_{N} N^{2s} \|u_{N}\|_{X^{0,b}}^{2} \sim  \sum_{N} \|u_{N}\|_{X^{s,b}}^{2}.   
\end{equation}

In an  analogous manner, we can decompose $u$ with respect to the ``time-space"  variable
\[
u = \sum_{L} u_{L} = \sum_{L} \textbf{1}_{\langle \tau + \mu_{k}\rangle \sim L}(u)
\]
where $L$ denotes the sequence of dyadic integers. Also, using the definition of the  operator $\textbf{1}_{\langle \tau + \mu_{k}\rangle \sim L}$ we can establish the following norm equivalence
\begin{equation}\label{A2}
\|u\|_{X^{0,b}}^{2} \sim \sum_{L} L^{2b} \|u_{L}\|_{L^{2}(\mathbb{R} \times M)}^{2} \sim  \sum_{L} \|u_{L}\|_{X^{0,b}}^{2} .    
\end{equation}

\subsubsection{Linear estimates in the function spaces}
\begin{prop}\label{LinearEstimates1} (Linear estimates in the $X^{s,b}$ spaces).
Let $b,s >0$ and $u_{0} \in H^{s}(M)$. Then
\begin{enumerate}
\item[$(i)$]
\begin{equation}\label{XsbLinearEstimateA}
 \| S(t) u_{0}\|_{X_{T}^{s,b}(\mathbb{R}\times M)} \lesssim T^{\frac{1}{2} - b} \|u_{0}\|_{H^{s}(M)}
\end{equation}
\item[$(ii)$] Let $0 < b' < \frac{1}{2}$ and $0 < b < 1 - b'$. Then for all $F \in X_{T}^{s, -b'}(M)$ ,
\begin{equation}\label{XsbLinearEstimateB}
\left\| \int_{0}^{t} S(t-t')F(t') dt'\right\|_{X_{T}^{s,b}(\mathbb{R}\times M)} \lesssim  T^{1-b-b'}  \|F\|_{X_{T}^{s,-b'}(\mathbb{R}\times M)},
\end{equation}
provided  $0 < T \leq 1$.
\end{enumerate}
\end{prop}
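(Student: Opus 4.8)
These are the standard linear estimates in Bourgain's spaces, and the plan is to derive them from the two expressions for the $X^{s,b}$--norm in \eqref{XsbNorm}: conjugating by the unitary group $S(t)=e^{it\Delta}$ reduces an $X^{s,b}$--bound to a one--dimensional Fourier statement about $H^{s}(M)$--valued functions of $t$ alone (cf.\ \cite{TAO2006B}). The two elementary inputs I would isolate at the outset are: \emph{(a)} for a fixed $\psi\in C_{0}^{\infty}(\mathbb R)$ with $\psi\equiv 1$ on $[-1,1]$ and $\psi_{T}:=\psi(\cdot/T)$, the scaling bound $\|\psi_{T}\|_{H^{b}(\mathbb R)}\lesssim T^{\frac12-b}$ for $0<T\le 1$ and $b\ge0$, which follows from $\widehat{\psi_{T}}(\tau)=T\widehat\psi(T\tau)$ and the pointwise inequality $\langle\tau\rangle^{2}\le T^{-2}\langle T\tau\rangle^{2}$ (valid for $T\le1$), as well as its variant $\|(s^{k}\psi)(\cdot/T)\|_{H^{b}}\lesssim_{k}T^{\frac12-b}$; and \emph{(b)} multiplication by a function of $C_{0}^{\infty}(\mathbb R)$ is bounded on $H^{\sigma}(\mathbb R)$ for every $\sigma\in\mathbb R$.

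\emph{Part $(i)$.} Since $\psi_{T}(t)S(t)u_{0}$ agrees with $S(t)u_{0}$ on $[-T,T]\supset[0,T]$ it is an admissible extension, and $S(-t)\big[\psi_{T}(t)S(t)u_{0}\big]=\psi_{T}(t)u_{0}$; hence, by the definition of the restriction norm and the second formula in \eqref{XsbNorm},
\[
\|S(t)u_{0}\|_{X_{T}^{s,b}}\le\|\psi_{T}u_{0}\|_{H^{b}(\mathbb R;H^{s}(M))}=\|\psi_{T}\|_{H^{b}(\mathbb R)}\|u_{0}\|_{H^{s}(M)}\lesssim T^{\frac12-b}\|u_{0}\|_{H^{s}(M)},
\]
which is \eqref{XsbLinearEstimateA}.

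\emph{Part $(ii)$.} Because $(1-\Delta)^{s/2}$ commutes with $S(t)$, with multiplication by $\psi_{T}$, and with the Duhamel operator $\mathcal D F(t):=\int_{0}^{t}S(t-t')F(t')\,dt'$, and is an isometry $X^{s,b}\to X^{0,b}$ and $X^{s,-b'}\to X^{0,-b'}$, I may assume $s=0$. Choosing an extension $\widetilde F$ of $F$ with $\|\widetilde F\|_{X^{0,-b'}}\le 2\|F\|_{X_{T}^{0,-b'}}$ and replacing it by $\psi_{T}\widetilde F$ (still equal to $F$ on $[0,T]$, with $X^{0,-b'}$--norm $\lesssim\|F\|_{X_{T}^{0,-b'}}$ by \emph{(b)}), I reduce to the case where $F$ is supported in $\{|t|\le CT\}$; since $\psi_{T}\mathcal D F$ is then an admissible extension of $(\mathcal D F)|_{[0,T]}$, it suffices to bound $\|\psi_{T}\mathcal D F\|_{X^{0,b}}$. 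Conjugating, set $G(t)=S(-t)F(t)$, so $\|G\|_{H^{-b'}(\mathbb R;L^{2}(M))}=\|F\|_{X^{0,-b'}}$, $G$ is supported in $\{|t|\le CT\}$, and
\[
S(-t)\mathcal D F(t)=\int_{0}^{t}G(t')\,dt'=\int_{\mathbb R}\frac{e^{it\tau}-1}{i\tau}\,\widehat G(\tau)\,d\tau .
\]
The key idea is to split $\widehat G=\widehat G\,\mathbf 1_{|\tau|\le 1/T}+\widehat G\,\mathbf 1_{|\tau|>1/T}$, at the scale of the time cut-off. On $|\tau|>1/T$ I write $\frac{e^{it\tau}-1}{i\tau}=\frac{e^{it\tau}}{i\tau}-\frac1{i\tau}$: the first piece is $\psi_{T}$ times $\mathcal F^{-1}\big[\mathbf 1_{|\tau|>1/T}\widehat G/(i\tau)\big]$, whose $H^{b}$--norm is $\lesssim\big(\sup_{|\tau|>1/T}\langle\tau\rangle^{2b+2b'-2}\big)^{1/2}\|G\|_{H^{-b'}}\lesssim T^{1-b-b'}\|G\|_{H^{-b'}}$ since $b+b'<1$; the second piece is constant in $t$, its $L^{2}(M)$--norm bounded (Cauchy--Schwarz in $\tau$) by $\big(\int_{|\tau|>1/T}|\tau|^{2b'-2}d\tau\big)^{1/2}\|G\|_{H^{-b'}}\lesssim T^{\frac12-b'}\|G\|_{H^{-b'}}$, where $b'<\tfrac12$ is precisely what makes this integral finite, so after multiplication by $\psi_{T}$ and \emph{(a)} it contributes $\lesssim T^{\frac12-b}T^{\frac12-b'}\|G\|_{H^{-b'}}=T^{1-b-b'}\|G\|_{H^{-b'}}$. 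On $|\tau|\le 1/T$ one has $|t\tau|\lesssim1$ on $\supp\psi_{T}$, so expanding $\frac{e^{it\tau}-1}{i\tau}=\sum_{k\ge0}\frac{i^{k}t^{k+1}\tau^{k}}{(k+1)!}$ and combining $\|\psi_{T}t^{k+1}\|_{H^{b}}\lesssim_{k}T^{k+1}T^{\frac12-b}$ with $\big|\int_{|\tau|\le1/T}\tau^{k}\widehat G(\tau)\,d\tau\big|\lesssim_{k}T^{-k-b'-\frac12}\|G\|_{H^{-b'}}$ shows that each term is $\lesssim_{k}T^{1-b-b'}\|G\|_{H^{-b'}}$ with $k$--summable constants. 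Adding the three contributions gives $\|\psi_{T}\mathcal D F\|_{X^{0,b}}\lesssim T^{1-b-b'}\|F\|_{X^{0,-b'}}$, and undoing the reductions yields \eqref{XsbLinearEstimateB}.

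The conceptual content here is minimal once inputs \emph{(a)}--\emph{(b)} are in hand; I expect the only mildly delicate point to be choosing the correct splitting scale ($|\tau|\sim 1/T$, dictated by the time--support of $F$) and bookkeeping the powers of $T$ so that all three pieces balance at $T^{1-b-b'}$ — and, within that, recognizing that the hypothesis $b'<\tfrac12$ is exactly what guarantees convergence of the integral $\int_{|\tau|>1/T}|\tau|^{2b'-2}\,d\tau$ produced by the removable singularity of $\frac{e^{it\tau}-1}{i\tau}$ at $\tau=0$.
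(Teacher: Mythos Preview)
Your sketch is correct and reproduces precisely the standard argument (conjugation by $S(t)$ to reduce to a one--dimensional Fourier statement, splitting of $\widehat G$ at the scale $|\tau|\sim 1/T$, and Taylor expansion for the low--frequency piece) that the paper itself does not write out but simply defers to \cite{BGT1}, \cite{JIANG}, and \cite{GINIBRE}. The only place worth an extra word of care is the uniform--in--$T$ boundedness of multiplication by $\psi_{T}$ in $H^{b}$ on the piece $\mathcal F^{-1}\big[\mathbf 1_{|\tau|>1/T}\widehat G/(i\tau)\big]$ when $b>\tfrac12$, but this follows from the Fourier--support constraint $|\tau|>1/T$ (which yields $\langle\sigma\rangle\lesssim\langle\tau\rangle\langle T(\sigma-\tau)\rangle$ after convolution with $\widehat{\psi_{T}}$) and is part of the standard computation in the cited references.
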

\begin{proof}
For the proof of this proposition we refer to \cite{BGT1}, \cite{JIANG} or \cite{GINIBRE}.
\end{proof}

 From $(\ref{XsbLinearEstimateA})$ we know that $\|S(t) u_{0}\|_{X^{s,b}_{1}(\mathbb{R}\times M)} \leq C \|u_{0}\|_{H^{s}(M)}$, for some $C > 0$. From the definition
of $X^{s,b}_{T}$ spaces we know that $T_{1} < T_{2}$ implies $X_{T_{2}}^{s,b} \subset X_{T_{1}}^{s,b}$. Therefore for $T \leq 1$,

\begin{equation}\label{XsbLinearEstimateAA}
\| S(t) u_{0}\|_{X_{T}^{s,b}(\mathbb{R}\times M)} \leq C \|u_{0}\|_{H^{s}(M)}.
\end{equation}

\subsection{Bilinear Strichartz estimates and applications}

In this subsection we record some estimates obtained in \cite{JIANG} while working on the well-posedness of the cubic NLS equation
\[
i \partial_{t} u + \Delta_{g} u = 0 ;  \mbox{ } u(x, 0) = f(x),
\]
posed on bi-dimensional compact manifolds with boundary. We start with the following lemma.

\begin{lema}\label{LemaA}
Let $(M,g)$ be a two dimensional compact manifold with boundary. If for any $f , h \in L^{2}(M)$ we have
\begin{equation}
 \textbf{1}_{\Lambda \leq \sqrt{- \Delta} \leq 2 \Lambda}(f) = f, \qquad{ }\textbf{1}_{\Gamma \leq \sqrt{- \Delta} \leq 2 \Gamma}(h) = h,
\end{equation}
where $\Lambda$ and $\Gamma$ are dyadic integers, then for any $s > s_{0} = \frac{2}{3}$, there exists  $C > 0$ such that
\begin{equation}\label{A}
\|e^{i t \Delta} f \mbox{ } e^{i t \Delta} h \|_{L^{2}([0,1] \times M)} \leq C (\min (\Gamma,\Lambda))^{s} \| f \|_{L^{2}(M)} \| h \|_{L^{2}(M)}.
\end{equation}
\end{lema}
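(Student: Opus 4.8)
The plan is to reduce the bilinear estimate \eqref{A} to the known bilinear Strichartz estimate from Jiang's work \cite{JIANG}, which in the present excerpt is precisely the content we are invited to assume as ``recorded from \cite{JIANG}''. Since Lemma \ref{LemaA} states exactly inequality \eqref{BilinearEstimates} for two-dimensional compact manifolds with boundary at the regularity threshold $s_0 = \tfrac{2}{3}$, the substance of the proof is to invoke the spectral-localization argument of \cite{JIANG}: one writes $e^{it\Delta}f = \sum_{\Lambda \le \mu_j \le 2\Lambda} e^{-it\mu_j^2} P_j f$ and similarly for $h$, squares the $L^2_{t,x}$ norm of the product, and expands. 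After an orthogonality-in-time reduction (grouping the frequencies $\mu_j^2 + \mu_k^2$ into unit-length clusters, using that the time interval is $[0,1]$ so that $\mathbb{Z}$-separated frequencies are almost orthogonal), the estimate is localized to the interaction of a single eigenvalue cluster against the full dyadic block of size $\Gamma$.

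The key steps, in order, would be: (1) by symmetry assume $\Lambda \le \Gamma$, so that $\min(\Lambda,\Gamma) = \Lambda$; (2) expand $\|e^{it\Delta}f\, e^{it\Delta}h\|_{L^2_{t,x}}^2$ using the eigenfunction decomposition and Fubini, writing the time integral over $[0,1]$ of $e^{-it(\mu_j^2 + \mu_k^2 - \mu_{j'}^2 - \mu_{k'}^2)}$, which is $O(\langle \mu_j^2 + \mu_k^2 - \mu_{j'}^2 - \mu_{k'}^2 \rangle^{-1})$ and hence induces an almost-orthogonal decomposition of the sum into blocks where $\mu_j^2 + \mu_k^2$ lies in a fixed interval of length $1$; (3) on each such block, use the spectral-cluster estimates for the projectors $\chi_\mu$ (the analogue of the estimates of \cite{BGT2} and the manifold-with-boundary refinements used in \cite{JIANG}, coming from Sogge-type $L^p$ bounds for spectral clusters together with the squashing argument that gives the gain $(\min)^{s_0}$ with $s_0 = \tfrac23$ in dimension $2$); (4) sum the blocks back up, using Cauchy--Schwarz in the cluster index and the $\ell^2$-almost-orthogonality from step (2), to recover $\|f\|_{L^2}\|h\|_{L^2}$ on the right; (5) finally absorb the $\varepsilon$-loss by taking $s > s_0$ rather than $s = s_0$, which makes the dyadic sums over the second frequency $\Gamma$ converge and removes logarithmic losses.

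The main obstacle — and the reason the manifold-with-boundary case is genuinely harder than $\mathbb{S}^2$ or $\mathbb{T}^2$ — is step (3): without arithmetic control of the eigenvalues one cannot count lattice points, so the gain in $\Lambda$ must come entirely from $L^p$ spectral-cluster bounds. The delicate point is obtaining the exponent $\tfrac{2}{3}$ rather than the cruder exponent one would get from a naive Sobolev embedding; this requires the refined parametrix / wave-packet analysis near the boundary from \cite{BSS2008,BSS2012} as used by Jiang, controlling the number of eigenvalues in a window and the concentration of eigenfunctions, including those (in the Dirichlet/Neumann case) whose mass concentrates along geodesics hitting $\partial M$. All of this is carried out in \cite{JIANG}; here we simply quote it, so that the role of Lemma \ref{LemaA} in the present paper is only to package the estimate in a form convenient for the bilinear $X^{s,b}$ estimates for the quadratic nonlinearity that follow.
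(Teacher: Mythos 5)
Your proposal is correct and matches the paper's treatment: the paper proves Lemma \ref{LemaA} simply by citing \cite{JIANG} (p.\ 85), which is exactly what you do, and your sketch of the internal mechanism of Jiang's argument (almost-orthogonality in time plus spectral-cluster bounds near the boundary) is a faithful, if not independently verified, outline of that reference. No gap to report.
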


\begin{proof}
See \cite{JIANG}, p.85.
\end{proof}

\begin{lema}\label{LemaB}
Let $s>s_{0}=\frac23$, and  $\Gamma, \Lambda$ be dyadic integers. The following statements are equivalent:
\begin{enumerate}
\item[$(1)$] For any $f, h \in L^{2}(M)$ satisfying
\[
\textbf{1}_{\Lambda \leq \sqrt{- \Delta} \leq 2 \Lambda} f = f, \qquad{ } \textbf{1}_{\Gamma \leq \sqrt{- \Delta} \leq 2 \Gamma} h = h
\]
one has
\[
\|e^{i t \Delta} f \mbox{ } e^{i t \Delta} h \|_{L^{2}([0,1] \times M)} \leq C (\min (\Gamma,\Lambda))^{s} \| f \|_{L^{2}(M)} \| h \|_{L^{2}(M)}
\]
\item[$(2)$] For any $b > \frac{1}{2}$ and any $f, h \in X^{0,b}(\mathbb{R} \times M)$ satisfying
\[
\textbf{1}_{\Lambda \leq \sqrt{- \Delta} \leq 2 \Lambda} f = f, \qquad{ } \textbf{1}_{\Gamma \leq \sqrt{- \Delta} \leq 2 \Gamma} h = h
\]
one has
\begin{equation}\label{B}
    \|fh\|_{L^{2}(\mathbb{R} \times M)} \leq C (\min (\Gamma, \Lambda))^{s} \|f\|_{X^{0,b}(\mathbb{R} \times M)} \|h\|_{X^{0,b}(\mathbb{R} \times M)}
\end{equation}
\end{enumerate}
\end{lema}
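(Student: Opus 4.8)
The plan is to prove the equivalence $(1)\Leftrightarrow (2)$ of Lemma~\ref{LemaB} by a standard transference (or Fubini-in-frequency) argument between the linear propagator $e^{it\Delta}$ and the Bourgain space $X^{0,b}$, exploiting that $X^{0,b}$ functions can be written as superpositions of modulated free evolutions. The implication $(2)\Rightarrow(1)$ is immediate: given $f,h\in L^2(M)$ with the stated spectral localizations, the functions $\tilde f(t,x):=\psi(t)e^{it\Delta}f(x)$ and $\tilde h(t,x):=\psi(t)e^{it\Delta}h(x)$, where $\psi\in C_0^\infty(\mathbb{R})$ equals $1$ on $[0,1]$, are spectrally localized in the same dyadic blocks and satisfy $\|\tilde f\|_{X^{0,b}}\lesssim \|f\|_{L^2(M)}$, $\|\tilde h\|_{X^{0,b}}\lesssim\|h\|_{L^2(M)}$ for any $b$ (the time cutoff contributes a finite factor since $S(-t)\tilde f(t,\cdot)=\psi(t)f$ is smooth and compactly supported in $t$). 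Applying $(2)$ to $\tilde f,\tilde h$ and restricting the space-time integral to $[0,1]\times M$ gives $(1)$.

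The substance is in $(1)\Rightarrow(2)$. First I would reduce to functions of the form $f(t,x)=\int_{\mathbb{R}}e^{it\tau}g_\tau(x)\,d\tau$ — more precisely, write $f(t,\cdot)=S(t)\big(S(-t)f(t,\cdot)\big)$ and set $G(\tau,x)$ to be the time-Fourier transform of $S(-t)f(t,\cdot)$, so that $f(t,x)=\int_{\mathbb{R}} e^{it\tau} S(t)G(\tau,\cdot)(x)\,d\tau$, i.e.\ $f$ is a continuous superposition over $\tau$ of modulated free solutions $e^{it\tau}e^{it\Delta}G(\tau,\cdot)$. Note that $\|f\|_{X^{0,b}}^2 = \int_{\mathbb{R}}\langle\tau\rangle^{2b}\|G(\tau,\cdot)\|_{L^2(M)}^2\,d\tau$ (using that $G(\tau,\cdot)$ inherits the dyadic spectral localization of $f$, so the $\langle\tau+\mu_k\rangle$ weight becomes $\langle\tau\rangle$ up to harmless constants, as $|\mu_k|\sim\Lambda$ is fixed on the block — actually one keeps $\langle\tau\rangle$ directly since the definition already puts the weight on $S(-t)f$). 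Writing similarly $h(t,x)=\int_{\mathbb{R}}e^{it\sigma}e^{it\Delta}H(\sigma,\cdot)(x)\,d\sigma$, the product is
\[
f(t,x)h(t,x)=\int_{\mathbb{R}^2} e^{it(\tau+\sigma)}\,\big(e^{it\Delta}G(\tau,\cdot)\big)(x)\,\big(e^{it\Delta}H(\sigma,\cdot)\big)(x)\,d\tau\,d\sigma.
\]
By Minkowski's integral inequality in $L^2(\mathbb{R}\times M)$ and the observation that the modulation factor $e^{it(\tau+\sigma)}$ has modulus one (so it does not affect any $L^2_{t,x}$ norm over all of $\mathbb{R}$ — here one needs a Plancherel/orthogonality step to glue the $[0,1]$-estimate into an estimate on $\mathbb{R}$, typically done by tiling $\mathbb{R}$ into unit intervals and using the time-translation invariance of $e^{it\Delta}$), we bound
\[
\|fh\|_{L^2(\mathbb{R}\times M)}\le \int_{\mathbb{R}^2}\big\|(e^{it\Delta}G(\tau,\cdot))(e^{it\Delta}H(\sigma,\cdot))\big\|_{L^2(\mathbb{R}\times M)}\,d\tau\,d\sigma.
\]
Each inner factor is controlled, via hypothesis $(1)$ and the tiling argument, by $C(\min(\Gamma,\Lambda))^s\|G(\tau,\cdot)\|_{L^2(M)}\|H(\sigma,\cdot)\|_{L^2(M)}$. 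Then I would close with Cauchy--Schwarz in $(\tau,\sigma)$ against the weights $\langle\tau\rangle^{-b}\langle\tau\rangle^{b}$ and $\langle\sigma\rangle^{-b}\langle\sigma\rangle^{b}$, using $b>\frac12$ so that $\int_{\mathbb{R}}\langle\tau\rangle^{-2b}\,d\tau<\infty$; this produces exactly $C(\min(\Gamma,\Lambda))^s\|f\|_{X^{0,b}}\|h\|_{X^{0,b}}$.

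The main obstacle is the passage from the compact-interval estimate $\|e^{it\Delta}f\,e^{it\Delta}h\|_{L^2([0,1]\times M)}$ in $(1)$ to a global-in-time control $\|e^{it\Delta}G(\tau,\cdot)\,e^{it\Delta}H(\sigma,\cdot)\|_{L^2(\mathbb{R}\times M)}$, which a priori could be infinite. The standard resolution — and the one I would use — is to recall that $X^{0,b}$ estimates are inherently local in time (or to insert a time cutoff $\psi(t)$ from the start, replacing $\mathbb{R}$ by $[0,1]$ everywhere in statement $(2)$, which is the version actually needed for the contraction argument), or, if one insists on the global statement, to decompose $\mathbb{R}=\bigcup_{n\in\mathbb{Z}}[n,n+1]$ and use translation invariance of $e^{it\Delta}$ together with $\ell^2$-summability coming from the $X^{0,b}$ structure. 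One must be slightly careful that the modulation $e^{it\tau}$ interacts correctly with this tiling, but since $|e^{it\tau}|=1$ it only shifts phases and does not spoil the $L^2_{t,x}$ norms. Everything else is routine Fourier analysis in the time variable plus Cauchy--Schwarz, so I expect the write-up to be short once the tiling/cutoff convention is fixed.
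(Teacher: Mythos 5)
Your $(2)\Rightarrow(1)$ direction is fine, and your overall plan for $(1)\Rightarrow(2)$ (write $f,h$ as superpositions of modulated free evolutions, apply Minkowski, then Cauchy--Schwarz in the modulation variables using $b>\tfrac12$) is the standard transference argument that the paper delegates to Jiang (who in turn follows Burq--G\'erard--Tzvetkov). However, the chain of inequalities you actually display contains a genuine gap: after Minkowski you claim that each inner factor
\[
\bigl\| \bigl(e^{it\Delta}G(\tau,\cdot)\bigr)\bigl(e^{it\Delta}H(\sigma,\cdot)\bigr)\bigr\|_{L^{2}(\mathbb{R}\times M)}
\]
is controlled by $C(\min(\Gamma,\Lambda))^{s}\|G(\tau,\cdot)\|_{L^{2}(M)}\|H(\sigma,\cdot)\|_{L^{2}(M)}$ ``via hypothesis $(1)$ and the tiling argument.'' This is false: by time-translation invariance the estimate $(1)$ holds on every unit interval $[n,n+1]$ with the \emph{same} right-hand side, so squaring and summing over $n\in\mathbb{Z}$ diverges --- the product of two nonzero free solutions on a compact manifold is never in $L^{2}(\mathbb{R}\times M)$. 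So the quantity you are integrating in $(\tau,\sigma)$ is generically $+\infty$, and the Minkowski step, applied globally in time before any localization, cannot be closed.

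The correct order of operations is the reverse of yours: first localize $f$ and $h$ in time, then superpose. Concretely, write $\|fh\|_{L^{2}(\mathbb{R}\times M)}^{2}=\sum_{n}\|fh\|_{L^{2}([n,n+1]\times M)}^{2}$, replace $f,h$ on $[n,n+1]$ by $\psi(t-n)f,\ \psi(t-n)h$ with $\psi\in C_{0}^{\infty}$ equal to $1$ on $[0,1]$, run your superposition--Minkowski--Cauchy--Schwarz argument on each compactly supported piece (where only the $L^{2}$ norm over a fixed bounded time interval is needed, so $(1)$ plus finitely many translations suffices), and then resum using Cauchy--Schwarz in $n$ together with the almost-orthogonality property $\sum_{n}\|\psi(\cdot-n)u\|_{X^{0,b}}^{2}\lesssim\|u\|_{X^{0,b}}^{2}$ (proved for $0\le b\le 1$ by interpolating between $b=0$ and $b=1$, using the finite overlap of the supports). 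You do mention both the cutoff and the tiling as possible resolutions in your final paragraph, but the version you fold into the displayed estimates is the one that fails; the write-up needs the localization of $f,h$ to come first, and needs the almost-orthogonality lemma stated and justified, before the argument is complete.
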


\begin{proof}
See \cite{JIANG}, p.99.
\end{proof}

\begin{lema}\label{LemaC}
Let $(M,g)$ be a two dimensional compact manifold with boundary. If for any $f , h \in L^{2}(M)$ we have
\[
\textbf{1}_{\Lambda \leq \sqrt{- \Delta} \leq 2 \Lambda}(f) = f, \qquad{ }\textbf{1}_{\Gamma \leq \sqrt{- \Delta} \leq 2 \Gamma}(h) = h.
\]
Then for any $s > s_{0} = \frac{2}{3}$, there exists  $C > 0$ such that

\begin{equation}\label{C}
 \| (\nabla_{x} ( e^{i t \Delta}f) ) \mbox{ } e^{i t \Delta} h \|_{L^{2}([0,1] \times M)} \leq C \Lambda(\min (\Lambda, \Gamma))^{s} \| f \|_{L^{2}(M)} \| h \|_{L^{2}(M)}
\end{equation}

\end{lema}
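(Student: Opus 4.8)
The plan is to obtain \eqref{C} from the ordinary bilinear estimate \eqref{A} of Lemma \ref{LemaA}, the guiding heuristic being that applying $\nabla_x$ to a function spectrally localized at frequency $\sim\Lambda$ costs a factor $O(\Lambda)$. I would argue first for smooth $f,h$ — so that $u:=e^{it\Delta}f$ and $v:=e^{it\Delta}h$ are smooth up to $\partial M$ — and recover the general case by density. Two structural facts get used throughout: for every $t$, $u$ and $v$ lie in the domain of the boundary value problem (Dirichlet or Neumann) under consideration, so boundary contributions in Green's identity vanish; and $-\Delta_g u=e^{it\Delta}(-\Delta_g f)$ is again localized at $\sim\Lambda$, with $\|\Delta_g f\|_{L^2(M)}\le 4\Lambda^{2}\|f\|_{L^2(M)}$ (and symmetrically for $v,h,\Gamma$).

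The first step is to square the left side of \eqref{C} and integrate by parts in $x$:
\[
\|(\nabla_x u)\,v\|_{L^{2}([0,1]\times M)}^{2}=\int_{[0,1]\times M}|\nabla_x u|^{2}\,|v|^{2}
=-\int(\Delta_g u)\,\bar u\,|v|^{2}-\int\bar u\,\bigl(\nabla_x u\cdot\nabla_x|v|^{2}\bigr)=:\mathrm{I}+\mathrm{II},
\]
where the boundary term $\int_{\partial M}(\partial_\nu u)\,\bar u\,|v|^{2}$ vanishes (because $\bar u|_{\partial M}=0$ in the Dirichlet case, $\partial_\nu u|_{\partial M}=0$ in the Neumann case). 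For $\mathrm{I}$: since $|\Delta_g u\cdot\bar v|=\bigl|e^{it\Delta}(\Delta_g f)\;e^{it\Delta}h\bigr|$ and $|\bar u\,v|=\bigl|e^{it\Delta}f\;e^{it\Delta}h\bigr|$ pointwise, Cauchy--Schwarz followed by two applications of Lemma \ref{LemaA} (to the pairs $(\Delta_g f,h)$ and $(f,h)$) gives $|\mathrm{I}|\lesssim\Lambda^{2}\bigl(\min(\Lambda,\Gamma)\bigr)^{2s}\|f\|_{L^{2}}^{2}\|h\|_{L^{2}}^{2}$, which is exactly the square of the bound claimed in \eqref{C}. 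Hence the whole difficulty concentrates in the cross term $\mathrm{II}$.

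Controlling $\mathrm{II}$ is the step I expect to be the main obstacle. Writing $\nabla_x|v|^{2}=\bar v\,\nabla_x v+v\,\nabla_x\bar v$ and using only Cauchy--Schwarz gives $|\mathrm{II}|\lesssim\|(\nabla_x u)v\|_{L^{2}}\,\|(\nabla_x v)u\|_{L^{2}}$, which couples $\mathrm{II}$ to the companion quantity with $u$ and $v$ interchanged, so a naive bootstrap does not close; nor do the further integrations by parts one is tempted to try (using $\operatorname{Re}(\bar u\,\nabla_x u)=\tfrac12\nabla_x|u|^{2}$, or the local mass identity $\nabla_x\!\cdot\!\operatorname{Im}(\bar u\,\nabla_x u)=-\tfrac12\partial_t|u|^{2}$ together with integration by parts in $t$), each of which regenerates a term of the same shape. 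The resolution I would ultimately carry out — following \cite{JIANG}, building on \cite{BGT}, \cite{BGT2}, \cite{RAMONA} — is not to estimate $\mathrm{II}$ in isolation but to revisit the proof of Lemma \ref{LemaA} itself and let $\nabla_x$ act on the semiclassical parametrix (equivalently, inside the $TT^{*}$ argument) for $e^{it\Delta}$ restricted to frequencies $\sim\Lambda$: there $\nabla_x$ simply multiplies the amplitude of each wave packet by a quantity of size $O(\Lambda)$, so the proof of \eqref{A} goes through with that single modification and produces the extra factor $\Lambda$ in \eqref{C}. I would cross-check this against the identity $\mathrm{I}+\mathrm{II}=\|(\nabla_x u)v\|^2$ above, and finally remove the smoothness assumption on $f,h$ by density. (I would not try to shortcut through the $X^{0,b}$ form \eqref{B} of Lemma \ref{LemaB} by treating $\nabla_x u$ as one of the factors, since $\nabla_x(e^{it\Delta}f)$ is not a free solution and its $X^{0,b}$ norm exceeds $\Lambda\|f\|_{L^2}$ by a further power of the modulation; it is the frequency‑localization structure of the parametrix, not a bare Sobolev count, that supplies the clean $O(\Lambda)$.)
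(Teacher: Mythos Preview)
The paper does not supply its own argument for this lemma: its entire proof is the one-line citation ``See \cite{JIANG}, p.~86.'' Your proposal, after the integration-by-parts exploration, lands in exactly the same place --- you conclude that the estimate must be obtained by reopening the parametrix/$TT^{*}$ machinery behind Lemma~\ref{LemaA} and tracking the extra factor of $\Lambda$ that $\nabla_x$ contributes at the level of the oscillatory-integral amplitude, which is precisely what \cite{JIANG} does. So the resolutions agree.

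What is worth flagging is that the first two-thirds of your write-up is a detour rather than a proof. You correctly show that the term $\mathrm{I}$ is harmless, and you correctly diagnose that $\mathrm{II}$ cannot be closed by Cauchy--Schwarz or by further integrations by parts (it just regenerates the symmetric quantity with $u,v$ swapped). That diagnosis is accurate and useful as motivation for why \eqref{C} is not a formal corollary of \eqref{A}, but it does not itself advance toward \eqref{C}; the actual content of the proof is entirely deferred to the parametrix argument you only sketch in words. If you intend this as a proof rather than a heuristic discussion, the integration-by-parts paragraph should be cut and replaced by the parametrix computation (or, as the paper does, by a bare citation to \cite{JIANG}). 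Your closing parenthetical --- that one cannot shortcut via the $X^{0,b}$ estimate \eqref{B} because $\nabla_x(e^{it\Delta}f)$ is not a free Schr\"odinger evolution --- is a correct and worthwhile caution.
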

\begin{proof}
See \cite{JIANG}, p.86.
\end{proof}

\begin{lema}\label{LemaD}
Let $s>s_{0}=\frac23$, and  $\Gamma, \Lambda$ be dyadic integers.  The following statements are equivalent:
\begin{enumerate}
\item[$(1)$] For any $f, h \in L^{2}(M)$ satisfying
\[
\textbf{1}_{\Lambda \leq \sqrt{- \Delta} \leq 2 \Lambda} (f) = f, \qquad{ } \textbf{1}_{\Gamma \leq \sqrt{- \Delta} \leq 2 \Gamma} (h) = h
\]
one has
\[
\|(\nabla_{x} e^{i t \Delta} f) \mbox{ } e^{i t \Delta} h \|_{L^{2}([0,1] \times M)} \leq C \Lambda(\min (\Lambda, \Gamma))^{s} \| f \|_{L^{2}(M)} \| h \|_{L^{2}(M)}
\]
\item[$(2)$] Let $b > \frac{1}{2}$. Then, for any $f, h \in X^{0,b}(\mathbb{R} \times M)$ satisfying
\[
\textbf{1}_{\Lambda \leq \sqrt{- \Delta} \leq 2 \Lambda} (f) = f, \qquad{ } \textbf{1}_{\Gamma \leq \sqrt{- \Delta} \leq 2 \Gamma}(h) = h
\]
one has
\begin{equation}\label{D}
    \|(\nabla_{x} f)h\|_{L^{2}(\mathbb{R} \times M)} \leq C \Lambda (\min (\Lambda, \Gamma))^{s} \|f\|_{X^{0,b}(\mathbb{R} \times M)} \|h\|_{X^{0,b}(\mathbb{R} \times M)}.
\end{equation}

\end{enumerate}
\end{lema}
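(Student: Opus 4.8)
The plan is to prove both implications by the standard Bourgain--space transfer principle, essentially transcribing the argument used for Lemma~\ref{LemaB}; the extra factor $\Lambda$ carried by the gradient plays no structural role, since $\nabla_x$ commutes with every time--frequency operation performed below as well as with multiplication by functions of $t$ alone.

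\emph{Proof of $(2)\Rightarrow(1)$.} This is immediate. Fix $\psi\in C_{0}^{\infty}(\mathbb{R})$ with $\psi\equiv 1$ on $[0,1]$, and given $f_{0},h_{0}\in L^{2}(M)$ with $\textbf{1}_{\Lambda\leq\sqrt{-\Delta}\leq2\Lambda}f_{0}=f_{0}$ and $\textbf{1}_{\Gamma\leq\sqrt{-\Delta}\leq2\Gamma}h_{0}=h_{0}$, set $f(t):=\psi(t)S(t)f_{0}$ and $h(t):=\psi(t)S(t)h_{0}$. Since $S(t)$ commutes with the spectral projectors, $f$ and $h$ remain spectrally localized in the required dyadic shells, and from $S(-t)f(t)=\psi(t)f_{0}$ one gets $\|f\|_{X^{0,b}(\mathbb{R}\times M)}=\|\psi\|_{H^{b}(\mathbb{R})}\|f_{0}\|_{L^{2}(M)}$, and likewise for $h$. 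Applying the hypothesis $(\ref{D})$ and restricting the resulting spacetime norm from $\mathbb{R}\times M$ to $[0,1]\times M$, where $\psi\equiv1$, yields exactly the estimate in $(1)$.

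\emph{Proof of $(1)\Rightarrow(2)$.} One decomposes in the time--frequency variable. For $f\in X^{0,b}(\mathbb{R}\times M)$ write $w:=S(-\cdot)f\in H^{b}(\mathbb{R};L^{2}(M))$, so that $f(t)=c\int_{\mathbb{R}}e^{it\tau}S(t)\widehat{w}(\tau)\,d\tau$, where $\widehat{w}$ is the time--Fourier transform; since $S(-t)$ commutes with the spectral projectors, each $\widehat{w}(\tau)$ is localized to $\{\sqrt{-\Delta}\sim\Lambda\}$. Do the same for $h$, with $v:=S(-\cdot)h$ and $\widehat{v}(\sigma)$ localized to $\{\sqrt{-\Delta}\sim\Gamma\}$. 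Pick a smooth partition of unity $1=\sum_{n\in\mathbb{Z}}\rho(t-n)^{2}$ with $\rho\in C_{0}^{\infty}(-1,1)$; since $\rho$ depends only on $t$ it commutes with $\nabla_{x}$, and as the supports $(n-1,n+1)$ have bounded overlap,
\[
\|(\nabla_{x}f)h\|_{L^{2}(\mathbb{R}\times M)}^{2}\ \sim\ \sum_{n}\big\|\bigl(\nabla_{x}[\rho(t-n)f]\bigr)\,\bigl(\rho(t-n)h\bigr)\big\|_{L^{2}(\mathbb{R}\times M)}^{2}.
\]
For each fixed $n$ the inner factor is supported in $t\in(n-1,n+1)$; inserting the expansions above, applying Minkowski's integral inequality in $(\tau,\sigma)$ (the unimodular phases $e^{it(\tau+\sigma)}$ drop out), covering $(n-1,n+1)$ by two unit intervals, and using time--translation invariance together with $(1)$ applied to the free evolutions $S(t)\widehat{w}(\tau)$ and $S(t)\widehat{v}(\sigma)$, one obtains
\[
\big\|\bigl(\nabla_{x}[\rho(t-n)f]\bigr)\bigl(\rho(t-n)h\bigr)\big\|_{L^{2}}\ \lesssim\ \Lambda(\min(\Lambda,\Gamma))^{s}\Bigl(\int_{\mathbb{R}}\|\widehat{w_{n}}(\tau)\|_{L^{2}}\,d\tau\Bigr)\Bigl(\int_{\mathbb{R}}\|\widehat{v_{n}}(\sigma)\|_{L^{2}}\,d\sigma\Bigr),
\]
where $w_{n}:=S(-\cdot)(\rho(t-n)f)$ and $v_{n}:=S(-\cdot)(\rho(t-n)h)$. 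Since $b>\tfrac12$, Cauchy--Schwarz in $\tau$ gives $\int_{\mathbb{R}}\|\widehat{w_{n}}(\tau)\|_{L^{2}}\,d\tau\lesssim_{b}\|\rho(t-n)f\|_{X^{0,b}}$, and similarly for $v_{n}$. Summing in $n$ and using the standard facts that $\|\rho(t-n)f\|_{X^{0,b}}\lesssim\|f\|_{X^{0,b}}$ uniformly in $n$ and $\sum_{n}\|\rho(t-n)f\|_{X^{0,b}}^{2}\lesssim\|f\|_{X^{0,b}}^{2}$ (multiplication by translates of a fixed Schwartz function is uniformly bounded on $X^{0,b}$, and these operators are almost orthogonal in $n$) closes the estimate and produces $(\ref{D})$.

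The only genuinely delicate point is the final reassembly in $(1)\Rightarrow(2)$: one must combine the local (unit time interval) free--evolution estimate of Lemma~\ref{LemaC} with the \emph{global} $X^{0,b}$ norm, which forces the time partition of unity and the square--summability $\sum_{n}\|\rho(t-n)f\|_{X^{0,b}}^{2}\lesssim\|f\|_{X^{0,b}}^{2}$; everything else is a routine repetition of the proof of Lemma~\ref{LemaB}, the harmless extra factor $\Lambda$ being supplied by Lemma~\ref{LemaC}.
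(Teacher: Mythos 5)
Your proof is correct: the paper itself gives no argument for this lemma, merely citing \cite{JIANG} (p.~100), and the proof there (following the transfer principle of Burq--G\'erard--Tzvetkov) is exactly what you carry out --- the cut-off free evolution $\psi(t)S(t)f_{0}$ for $(2)\Rightarrow(1)$, and for $(1)\Rightarrow(2)$ the representation $f(t)=c\int e^{it\tau}S(t)\widehat{w}(\tau)\,d\tau$ combined with a unit-scale time partition, time-translation invariance of the free estimate, Cauchy--Schwarz in $\tau$ using $b>\tfrac12$, and square-summability of $\|\rho(\cdot-n)f\|_{X^{0,b}}$ in $n$. The gradient indeed only contributes the harmless factor $\Lambda$, exactly as you observe.
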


\begin{proof}
See \cite{JIANG}, p.100.
\end{proof}

\section{Bilinear Estimates}
It is well known that in the framework of Bourgain's
spaces the local well-posedness results can usually be reduced to the proof of adequate
$k$-linear estimates.  In our case, due to the non-linear structure of \eqref{SDS1} it is necessary to prove the bilinear estimates.  These  bilinear estimates are crucial  to perform the contraction argument as we shall see in the next section.

In this section, we use the linear  and the nonlinear estimates stated in the previous section to prove  the following crucial bilinear estimates.

\begin{prop}\label{FirstBilinearEstimate}
Let $s_{0} <  s  $, $b' = \frac{1}{2}-$. Then there exists $C > 0$, such that the bilinear estimate
\begin{equation}\label{FirstBilinearEstimateInequality}
\|u_{1} u_{2}\|_{X^{s, -b'}} \leq C \|u_{1}\|_{X^{s, b}} \|u_{2}\|_{X^{s, b}}
\end{equation}
holds provided  $b = \frac{1}{2} + \varepsilon_{1}$, for an adequate
selection of the parameters $0 <  \varepsilon_{1} << 1$.
\end{prop}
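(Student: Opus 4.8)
The plan is to prove the bilinear estimate \eqref{FirstBilinearEstimateInequality} by dyadic decomposition in both the spatial frequencies and the modulation variables, reducing everything to the bilinear Strichartz estimate \eqref{B} of Lemma \ref{LemaB}. First I would write $u_i = \sum_{N_i} u_{i,N_i}$ where $u_{i,N_i} = \textbf{1}_{\sqrt{-\Delta}\sim N_i}(u_i)$, so that by \eqref{A1} it suffices to control $\|u_{1,N_1} u_{2,N_2}\|_{X^{s,-b'}}$ with summable constants in $N_1, N_2$. By symmetry I may assume $N_1 \geq N_2$; then the output frequency of the product is $\lesssim N_1$, so $\|u_{1,N_1}u_{2,N_2}\|_{X^{s,-b'}} \lesssim N_1^{s}\|u_{1,N_1}u_{2,N_2}\|_{X^{0,-b'}}$. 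Dropping the $\langle \tau+\mu_k\rangle^{-b'}$ weight (since $-b'<0$) gives $\|u_{1,N_1}u_{2,N_2}\|_{X^{0,-b'}} \lesssim \|u_{1,N_1}u_{2,N_2}\|_{L^2(\mathbb R\times M)}$, and now Lemma \ref{LemaB} applies with $\Lambda = N_1$, $\Gamma = N_2$, $\min(\Gamma,\Lambda)=N_2$:
\[
\|u_{1,N_1}u_{2,N_2}\|_{L^2(\mathbb R\times M)} \leq C N_2^{\sigma}\|u_{1,N_1}\|_{X^{0,b}}\|u_{2,N_2}\|_{X^{0,b}},
\]
for any $\sigma$ with $s_0 < \sigma < s$, provided $b>\frac12$.

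Combining, the $(N_1,N_2)$-piece is bounded by $C N_1^{s} N_2^{\sigma}\|u_{1,N_1}\|_{X^{0,b}}\|u_{2,N_2}\|_{X^{0,b}}$. To close, I write $N_1^s = N_1^{s}$ and $N_2^\sigma = N_2^{s}\,N_2^{\sigma-s}$ with $\sigma - s < 0$, so the factor $N_2^{\sigma-s}$ (with $N_2\le N_1$) provides an off-diagonal gain $(N_2/N_1)^{?}$ — more precisely, I keep one copy of $N_1^s\|u_{1,N_1}\|_{X^{0,b}}$ and $N_2^s\|u_{2,N_2}\|_{X^{0,b}}$, which are the $X^{s,b}$ dyadic norms, and I am left with a summable geometric factor $N_2^{\sigma - s}$ times nothing on the $N_1$ side — wait, this requires a small spectral gap. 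The correct bookkeeping: since $\sigma < s$, choose $\varepsilon_0 = s - \sigma > 0$; then the summand is $\lesssim N_2^{-\varepsilon_0}\,\big(N_1^s\|u_{1,N_1}\|_{X^{0,b}}\big)\big(N_2^s\|u_{2,N_2}\|_{X^{0,b}}\big)$ when $N_2 \le N_1$... but this still has no decay in $N_1$. To genuinely sum I instead use that $\min(N_1,N_2)^\sigma \le \min(N_1,N_2)^{s}$ trivially won't help; rather I exploit $\sigma<s$ to write $N_1^{s}N_2^{\sigma} = (N_1 N_2)^{s}\,(N_2/N_1)^{s}N_2^{\sigma - s}\cdot N_1^{s}$ — the clean route is: the product frequency is $\sim N_1$ but is also localized, and I bound $N_1^{s}N_2^{\sigma}\le N_1^{s}N_2^{s}\cdot N_2^{\sigma-s}$, then Cauchy–Schwarz in $N_2$ using $\sum_{N_2\le N_1} N_2^{2(\sigma - s)} < \infty$ and Cauchy–Schwarz in $N_1$ against the square-summable sequence $\{N_1^{s}\|u_{1,N_1}\|_{X^{0,b}}\}$, with the $N_1$-sum absorbed because for fixed output dyadic block only $O(\log)$—in fact $O(1)$ after a second dyadic summation—pairs $(N_1,N_2)$ contribute; here one uses the standard Schur-type argument that $\sum_{N_1} a_{N_1} b_{N_1} \le \|a\|_{\ell^2}\|b\|_{\ell^2}$ after the $N_2$-sum has already produced an $\ell^2_{N_1}$ sequence.

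Finally I pass from $X^{0,b}$ back to $X^{s,b}$: by \eqref{A1}, $\sum_{N_1} N_1^{2s}\|u_{1,N_1}\|_{X^{0,b}}^2 \sim \|u_1\|_{X^{s,b}}^2$ and likewise for $u_2$, which closes the estimate. The main obstacle I anticipate is precisely the frequency summation in the high–low interaction: the gain from Lemma \ref{LemaB} lives only on $\min(N_1,N_2)$, so one must carefully spend the surplus $s - \sigma > 0$ (available because the hypothesis is $s_0 < s$, strictly, and \eqref{B} holds for every exponent above $s_0$) to get $\ell^2$-summability in \emph{both} dyadic parameters simultaneously; the modulation variables, by contrast, cause no trouble since we only ever discard the negative weight $\langle\tau+\mu_k\rangle^{-b'}$ on the output and use $b>\frac12$ on the inputs, which is guaranteed by the choice $b = \frac12 + \varepsilon_1$. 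One should also check that the output spatial localization is genuinely $\lesssim N_1$ even in the manifold-with-boundary setting — this is where the spectral projector $\textbf{1}_{\sqrt{-\Delta}\sim N}$ and the norm equivalence \eqref{A1} are used, rather than any Fourier-support convolution structure.
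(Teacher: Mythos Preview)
Your argument has a genuine gap at the step ``the output frequency of the product is $\lesssim N_1$''. On $\mathbb{R}^d$ this follows from the convolution structure of the Fourier transform, but on a compact manifold with boundary the spectral projectors $\textbf{1}_{\sqrt{-\Delta}\sim N}$ enjoy no such product rule: the product $u_{1,N_1}u_{2,N_2}$ can have nontrivial components at arbitrarily high output frequencies $N_0 \gg N_1$. You flag this yourself in the last sentence, but the resolution you suggest (``the spectral projector and the norm equivalence \eqref{A1} are used'') does not exist --- \eqref{A1} is only an $\ell^2$-orthogonality statement and says nothing about where products land spectrally. Consequently, once you pass to the trilinear form by duality, the region $N_0 > C\max(N_1,N_2)$ is nonempty and must be handled, and in that region dropping the weight $\langle\tau+\mu_k\rangle^{-b'}$ and applying Lemma~\ref{LemaB} leaves you with a factor $N_0^{s}$ on the dual function and no compensating decay.

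The paper's proof confronts exactly this obstacle in the sums $\Sigma_2$ and $\Sigma_4$. The mechanism is not Fourier support but integration by parts: write $u_{N_0}^{(0)} = -N_0^{-2}\Delta_g T(u_{N_0}^{(0)})$, apply Green's theorem (the boundary terms vanish by the Dirichlet/Neumann condition), and expand $\Delta_g(u_{N_1}^{(1)}u_{N_2}^{(2)})$ via the product rule. This produces a gain of $(N_1/N_0)^2$, but at the cost of a gradient landing on the inputs, which is why the \emph{gradient} bilinear Strichartz estimate (Lemmas~\ref{LemaC}--\ref{LemaD}) is needed here and not just Lemma~\ref{LemaB}. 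Your proposal never invokes Lemmas~\ref{LemaC}--\ref{LemaD}, and without them (or some substitute for the missing frequency localization) the high-output region cannot be summed.
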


\begin{prop}\label{SecondBilinearEstimate}
Let $s_{0} < s $. Then the bilinear inequality
\begin{equation}\label{SecondBilinearEstimateInequality}
\|u_{1} \overline{u_{2}}\|_{X^{s, -b}} \leq C \|u_{1}\|_{X^{s,b_{2}}} \|u_{2}\|_{X^{s,b_{2}}},
\end{equation}
holds true for some $C > 0$, provided $b= \frac{1}{2} - \varepsilon$, $b_{2} = \frac{1}{2} + \varepsilon_{2}$, for an adequate
selection of the parameters $0 < \varepsilon, \varepsilon_{2} << 1$.
\end{prop}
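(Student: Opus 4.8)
The plan is to reduce the estimate \eqref{SecondBilinearEstimateInequality} to the $L^2$-duality formulation and then to a sum over dyadic blocks, exactly as one does for Proposition \ref{FirstBilinearEstimate}, but keeping careful track of the fact that the conjugation $\overline{u_2}$ flips the sign of the frequency in the Schrödinger symbol, which is what allows us to work with the weaker index $b=\frac12-\varepsilon$ on the left. First I would write $\|u_1\overline{u_2}\|_{X^{s,-b}} = \sup \{ |(u_1\overline{u_2},w)_{L^2(\mathbb R\times M)}| : \|w\|_{X^{-s,b}}\le 1\}$ and decompose each of $u_1$, $u_2$, $w$ into spatial dyadic pieces $u_{1,N_1}$, $u_{2,N_2}$, $w_{N_3}$ (via $\mathbf 1_{\sqrt{-\Delta}\sim N_j}$) using \eqref{A1}. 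By the usual frequency‑localization considerations for a quadratic interaction, the trilinear form $\int u_{1,N_1}\overline{u_{2,N_2}}\,w_{N_3}$ vanishes unless the two largest of $N_1,N_2,N_3$ are comparable; one then splits into the regimes $N_3\lesssim N_1\sim N_2$ (high‑high to low) and $N_1\sim N_3\gtrsim N_2$ (and its symmetric counterpart $N_2\sim N_3\gtrsim N_1$).

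The core estimate in each regime is obtained from Lemma \ref{LemaB}, i.e. the $X^{0,b}$ bilinear Strichartz estimate \eqref{B}: pairing $u_{1,N_1}$ with $w_{N_3}$ by Cauchy–Schwarz in space-time and applying \eqref{B} to the product of those two spectrally localized factors, then estimating the remaining factor $u_{2,N_2}$ by Proposition \ref{BasicXsb}(ii) (the embedding $X^{0,1/6}\hookrightarrow L^3_tL^2_x$) together with a Bernstein‑type/Sobolev bound, or alternatively by a second application of \eqref{B}. The key arithmetic point is that, after inserting the Sobolev weights $N_1^{s}N_2^{s}N_3^{-s}$ coming from the $X^{s,b}$ and $X^{-s,b}$ norms, the factor $(\min(N_1,N_3))^{s}$ gained from the bilinear estimate together with the power of $\min(N_1,N_2)$ or of the low frequency beats the weights precisely when $s>s_0=\frac23$, leaving a geometric series in the dyadic parameters that sums. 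The conjugate structure does not change the $L^2_{t,x}$ bilinear Strichartz bound (the modulus inside the integral in \eqref{A} is insensitive to conjugation), so Lemma \ref{LemaB} applies verbatim to $\|u_{1,N_1}\overline{w_{N_3}}\|_{L^2}$; what it buys us is room to absorb the loss incurred by taking $b=\frac12-\varepsilon$ rather than $\frac12+$ on the output norm, at the cost of $b_2=\frac12+\varepsilon_2$ on the inputs, via a small interpolation/Hölder‑in‑time argument between the $X^{0,b}$ levels.

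Concretely, the steps are: (1) dualize and perform the spatial dyadic decomposition; (2) identify the surviving frequency regimes; (3) in each regime, apply Cauchy–Schwarz and Lemma \ref{LemaB} to two of the three factors and Proposition \ref{BasicXsb}(ii) (plus a Sobolev embedding $H^{\sigma}(M)\hookrightarrow L^{\infty}$ type bound with $\sigma$ slightly above $1$ on a two‑dimensional manifold, or a direct third application of the bilinear estimate) to the third; (4) collect the dyadic weights and verify that for $s>\frac23$ and $\varepsilon,\varepsilon_2$ small enough the exponents are strictly summable; (5) sum the geometric series and use \eqref{A1} to return to the full $X^{s,\cdot}$ norms. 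The main obstacle I anticipate is step (4) in the high‑high‑to‑low regime $N_3\ll N_1\sim N_2$: here the weight $N_1^{s}N_2^{s}N_3^{-s}\sim N_1^{2s}N_3^{-s}$ is large, and one must be sure that the gain $(\min(N_1,N_3))^{s}=N_3^{s}$ from \eqref{B} applied to the pair carrying $N_1$ and $N_3$, combined with the second factor's contribution $N_2^{\,0+}$, still produces a net negative power of $N_1$; this forces the precise threshold $s>\frac23$ and is where the choice of which two factors to pair in the bilinear estimate, and the small budget provided by $b_2-\frac12=\varepsilon_2$, must be spent carefully. A secondary technical point is handling the time‑localized restriction norms $X^{s,b}_T$ and the passage $b=\frac12-\varepsilon<1-b'$ so that the linear estimate \eqref{XsbLinearEstimateB} of Proposition \ref{LinearEstimates1} can later be invoked; this is routine given $\varepsilon,\varepsilon_2$ small.
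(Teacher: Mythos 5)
Your outline follows the paper's general strategy (dualize, spatial dyadic decomposition, bilinear Strichartz via Lemma \ref{LemaB}, interpolation with the $L^3_t L^2_x$/Sobolev bound, dyadic summation via Lemma \ref{DyadicSummation}), but it contains a step that fails and that causes you to miss the genuinely hard part of the argument. You assert that the trilinear form $\int u_{1,N_1}\overline{u_{2,N_2}}\,w_{N_3}$ ``vanishes unless the two largest of $N_1,N_2,N_3$ are comparable.'' That is a convolution-support fact about the Fourier transform on $\mathbb{R}^d$ or $\T^d$; on a general compact manifold with boundary the product of two spectrally localized functions is \emph{not} spectrally localized, and there is no such vanishing. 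Consequently the regime in which the dual frequency $N_0$ is much larger than both $N_1$ and $N_2$ does not disappear, and it is precisely the dangerous one: after inserting the weights coming from $X^{s,b_2}\times X^{s,b_2}\times X^{-s,b}$ the prefactor is of size $(N_0/N_1)^{s}N_2^{s'-s}$ (in the regime $N_2\le N_1$), which is unbounded when $N_0\gg N_1$ and cannot be summed by Lemma \ref{DyadicSummation} alone. The paper handles this regime (the terms $\widetilde{\Sigma_2}$, $\widetilde{\Sigma_4}$) by writing $u^{(0)}_{N_0}=-N_0^{-2}\Delta_g\bigl(T(u^{(0)}_{N_0})\bigr)$ with the almost-isometric multiplier $T$ of \eqref{TVOperatorsDefinition}, integrating by parts twice via Green's theorem (the boundary terms vanish under the Dirichlet or Neumann conditions), and distributing $\Delta_g$ over the product by \eqref{LaplacianOfProduct}. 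This produces the decisive gain $(N_1/N_0)^2$, turning the weight into $(N_1/N_0)^{2-s}$, which is summable for $s<2$. The cross term $\int T(u^{(0)}_{N_0})(\nabla u^{(1)}_{N_1},\nabla u^{(2)}_{N_2})_g$ then requires the \emph{gradient} bilinear Strichartz estimate of Lemmas \ref{LemaC}--\ref{LemaD}, an ingredient absent from your proposal, which only invokes Lemma \ref{LemaB}.

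A consequence of the same error is that you identify the ``high-high to low'' regime $N_3\ll N_1\sim N_2$ as the main obstacle; in fact that regime sits inside $\widetilde{\Sigma_1}$, where the weight $(N_0/N_1)^{s}$ is bounded and the combination of \eqref{B}, the interpolation Lemma \ref{InterpolationLemma} (which is what rigorously trades $N_2^{3/2}$ against $N_2^{s_0+\delta}$ to get $N_2^{s'}$ with $b'<\tfrac12$, rather than an informal ``Hölder-in-time'' adjustment), and Lemma \ref{DyadicSummation} closes the estimate without difficulty. Your remark that conjugation is harmless for the $L^2_{t,x}$ bilinear estimate is correct and is indeed why Lemmas \ref{LemaA}--\ref{LemaD} apply to $\overline{u_2}$ as well, but the proof as proposed is incomplete without the integration-by-parts mechanism and the gradient bilinear estimate for the output-frequency-dominant regime.
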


Before  providing proofs for  Propositions \ref{FirstBilinearEstimate} and \ref{SecondBilinearEstimate}, we record some auxiliary results. We begin with an interpolation lemma. 

\begin{lema}\label{InterpolationLemma}
For every $s' > s_{0} = \frac{2}{3}$, there exist $(b,b')$  satisfying respectively, $0 < b' < \frac{1}{2} < b$ , $b + b' < 1$; and $0 < \varepsilon < 1$, such that
\[
s' > \frac{3\theta}{2} + (s_{0} + \delta) (1- \theta),
\]
\[
b' > \frac{\theta}{6} + (\frac{1}{2} + \varepsilon) (1 - \theta).
\]
\end{lema}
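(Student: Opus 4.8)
\textbf{Proof proposal for Lemma \ref{InterpolationLemma}.}

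The plan is to treat this as an elementary exercise in choosing parameters so that two linear inequalities in $\theta$ can be satisfied simultaneously by some $\theta \in (0,1)$, uniformly in a small neighbourhood of the endpoints. First I would observe that the role of $\theta$ here is as an interpolation parameter: the quantity $\frac{3}{2}$ is the ``loss'' exponent $s$ appearing in the gradient bilinear estimate (Lemma \ref{LemaC}/\ref{LemaD}) after one cleans up the $\Lambda$ factor, while $s_0+\delta=\frac23+\delta$ is the exponent in the non-gradient bilinear estimate (Lemma \ref{LemaA}/\ref{LemaB}); similarly $\frac16$ is the Strichartz exponent $b$ from Proposition \ref{BasicXsb}(ii) ($X^{0,1/6}\hookrightarrow L^3_tL^2_x$) and $\frac12+\varepsilon$ is the exponent needed for the $X^{0,b}\hookrightarrow C_tL^2_x$ side. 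So the lemma is really asserting that the endpoint exponents $(\frac23,\frac12+\varepsilon)$ and $(\frac32,\frac16)$ straddle the target pair $(s',b')$ in the sense of complex/real interpolation.

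Concretely, I would argue as follows. Fix $s' > \frac23$. Since $s' < \frac32$ may be assumed (the case $s'\ge\frac32$ being only easier, as the first inequality then holds for all $\theta\in[0,1]$ with room to spare), choose $\theta_0 \in (0,1)$ so that $s' > \frac{3\theta_0}{2} + \frac23(1-\theta_0)$ strictly; this is possible because the right-hand side is continuous and decreasing in... no, increasing in $\theta_0$, equals $\frac23$ at $\theta_0=0$, and $s'>\frac23$, so all sufficiently small $\theta_0>0$ work, and by continuity there is even a whole interval $(0,\theta_1)$ of admissible $\theta$. Having fixed such a $\theta$ (taking it as small as convenient), I would then pick $\delta>0$ small enough that $s' > \frac{3\theta}{2} + (\tfrac23+\delta)(1-\theta)$ still holds — again possible by continuity in $\delta$. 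For the second inequality, set the target $b' := \frac{\theta}{6} + (\frac12+\varepsilon)(1-\theta)$ for a parameter $\varepsilon>0$ to be chosen; as $\varepsilon\to 0^+$ this tends to $\frac{\theta}{6}+\frac12(1-\theta) = \frac12 - \frac{\theta}{3} < \frac12$, so for $\varepsilon$ sufficiently small we have $b' < \frac12$, and clearly $b' > \frac{\theta}{6}\cdot\mathbf 1 > 0$ — in fact $b'>0$ is automatic. Then simply define $b := 1 - b' - \eta$ for a tiny $\eta>0$, or more carefully pick $b\in(\frac12, 1-b')$ (nonempty since $b'<\frac12$ forces $1-b'>\frac12$), which secures $0<b'<\frac12<b$ and $b+b'<1$. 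Finally one checks the second displayed inequality is not merely an equality but can be made strict by replacing $b'$ with $b'+\eta'$ for small $\eta'>0$ while keeping $b'+\eta'<\frac12$; since the statement only requires $\ge$... actually it requires strict $>$, so I would carry the small slack $\eta'$ explicitly from the start, i.e. choose $b'$ strictly between $\frac{\theta}{6}+(\frac12+\varepsilon)(1-\theta)$ and $\frac12$.

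I do not anticipate a genuine obstacle here — the content is bookkeeping. The one point requiring mild care is the \emph{order of quantifiers}: $\theta$, $\varepsilon$, $\delta$, $b$, $b'$ must be chosen in a consistent order so that each constraint, once imposed, is preserved by the later choices. The safe order is: given $s'$, first pick $\theta$ small; then pick $\delta$ small (depending on $\theta, s'$) to keep the first inequality strict; then pick $\varepsilon$ small (depending on $\theta$) so that $\frac{\theta}{6}+(\frac12+\varepsilon)(1-\theta)<\frac12$; then pick $b'$ in the resulting open interval $\bigl(\frac{\theta}{6}+(\frac12+\varepsilon)(1-\theta),\ \frac12\bigr)$; and finally pick $b\in(\frac12,1-b')$. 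Writing a short table of these dependencies and verifying each of the four conditions $0<b'<\frac12<b$, $b+b'<1$, and the two displayed strict inequalities would complete the proof. I would present this in about ten lines, emphasising only the continuity/monotonicity facts that make each choice possible and leaving the arithmetic verification to the reader.
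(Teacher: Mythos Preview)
Your proposal is correct and follows essentially the same elementary parameter-selection argument as the paper. The only cosmetic difference is in bookkeeping: the paper fixes $\delta$ first by writing $s' = s_0 + 2\delta$ and then chooses $\theta$ small, whereas you choose $\theta$ first and then $\delta$; and the paper ties $b,b'$ together via the explicit parametrisation $b=\tfrac12+\varepsilon$, $b'=\tfrac12-2\varepsilon$ (which automatically gives $b+b'=1-\varepsilon<1$) and then solves for the admissible range $\varepsilon<\tfrac{\theta}{9-3\theta}$, while you pick $b'$ freely in the open interval and then $b\in(\tfrac12,1-b')$. Both orderings work and the content is the same.
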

\begin{proof} Let $\frac{2}{3} < s' < 2$ and write $s' = s_{0} + 2 \delta$ for some $(\delta > 0)$. Choose $\theta \in (0,1)$ such that
\[
\frac{3 \theta}{2} + (s_{0} + \delta) (1 - \theta) < s',
\]
then $(\frac{5}{6} - \delta) \theta < s' - (s_{0} + \delta)$, (any choice works if $\frac{5}{6} \leq \delta$, otherwise $\theta$ has to be close enough to $0$). Next, we choose $\varepsilon$ such that $b' > \frac{\theta}{6} + (1 - \theta) b$ with $b' = \frac{1}{2} - 2 \varepsilon$, $b = \frac{1}{2} + \varepsilon$. That happens when
\[
(\frac{1}{2} - 2 \varepsilon) > \frac{\theta}{6} + (1 - \theta) (\frac{1}{2} + \varepsilon) \Longleftrightarrow \varepsilon < \frac{\theta}{ 9 - 3 \theta}.
\]
\end{proof}

To deal with dyadic summations, the following lemma proved in  \cite{BGT1}, page 282,  will be very useful.   
\begin{lema}\label{DyadicSummation}
For every $\gamma > 0$, every $\theta > 0$ there exists $C > 0$ such that if $(c_{N})$ and $(d_{N'})$
are two sequences of non-negative numbers indexed by the dyadic integers, then,
\begin{equation}\label{DyadicInequality}
\sum_{N \leq \gamma N'} \left(\frac{N}{N'}\right)^{\theta} c_{N} d_{N'} \leq C \left(\sum_{N} c_{N}^{2}\right)^{\frac{1}{2}}  \left(\sum_{N'} d_{N'}^{2}\right)^{\frac{1}{2}}.
\end{equation}
\end{lema}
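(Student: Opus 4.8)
The plan is to prove \eqref{DyadicInequality} by a symmetrically weighted Cauchy--Schwarz inequality, using the fact that on the dyadic scale the constraint $N \le \gamma N'$ combined with the decay factor $(N/N')^\theta$ produces a one-sided geometric series. First I would split the kernel evenly, writing
\[
\mathbf 1_{N \le \gamma N'}\Big(\tfrac{N}{N'}\Big)^{\theta} c_N d_{N'} = \Big[\Big(\tfrac{N}{N'}\Big)^{\theta/2} c_N\Big]\,\Big[\Big(\tfrac{N}{N'}\Big)^{\theta/2} d_{N'}\Big]\,\mathbf 1_{N \le \gamma N'},
\]
and applying Cauchy--Schwarz in the double index $(N,N')$. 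This bounds the left-hand side of \eqref{DyadicInequality} by
\[
\Big(\sum_{N \le \gamma N'} \big(\tfrac{N}{N'}\big)^{\theta} c_N^{2}\Big)^{1/2} \Big(\sum_{N \le \gamma N'} \big(\tfrac{N}{N'}\big)^{\theta} d_{N'}^{2}\Big)^{1/2}.
\]

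Next I would estimate the two one-variable inner sums. For the second factor, fix $N'$ and sum over the admissible $N$: these are the dyadic integers $N \le \gamma N'$, i.e. $N \in \{N^{*}, N^{*}/2, N^{*}/4, \dots\}$ where $N^{*}$ is the largest dyadic integer not exceeding $\gamma N'$; hence
\[
\sum_{N \le \gamma N'}\Big(\tfrac{N}{N'}\Big)^{\theta} = \Big(\tfrac{N^{*}}{N'}\Big)^{\theta}\sum_{k \ge 0} 2^{-k\theta} \le \gamma^{\theta}\,\frac{1}{1 - 2^{-\theta}} =: C_1,
\]
since $N^{*} \le \gamma N'$. Interchanging the order of summation, the second factor is at most $C_1^{1/2}\big(\sum_{N'} d_{N'}^{2}\big)^{1/2}$. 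Symmetrically, for the first factor fix $N$ and sum over the admissible $N'$: these satisfy $N' \ge N/\gamma$, so $N' \in \{N_{*}, 2N_{*}, 4N_{*}, \dots\}$ with $N_{*}$ the smallest dyadic integer $\ge N/\gamma$, and $N/N_{*} \le \gamma$ gives $\sum_{N' \ge N/\gamma}(N/N')^{\theta} \le \gamma^{\theta}/(1-2^{-\theta}) =: C_2$; thus the first factor is at most $C_2^{1/2}\big(\sum_{N} c_N^{2}\big)^{1/2}$. Combining the two estimates yields \eqref{DyadicInequality} with $C = (C_1 C_2)^{1/2} = \gamma^{\theta}/(1 - 2^{-\theta})$. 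Equivalently, this is just Schur's test applied to the kernel $K_{N,N'} = \mathbf 1_{N \le \gamma N'}(N/N')^{\theta}$, the two required bounds $\sup_{N'}\sum_{N} K_{N,N'} \le C_1$ and $\sup_{N}\sum_{N'} K_{N,N'} \le C_2$ being exactly the geometric-series estimates above.

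I do not anticipate a genuine obstacle here: the argument is routine, and the only point demanding a little care is the bookkeeping of the dyadic indices, namely that the admissible values of $N$ (respectively $N'$) form a one-sided geometric progression of ratio $2$ emanating from the extreme admissible value, and that this extreme value differs from $\gamma N'$ (respectively from $N/\gamma$) by a factor lying in $[\tfrac12,1]$, which is what keeps $C_1,C_2$ uniform in $N,N'$. The hypothesis $\theta > 0$ enters precisely to guarantee convergence of $\sum_{k\ge0} 2^{-k\theta}$, and correspondingly the constant degenerates as $\theta \to 0^{+}$; the parameter $\gamma$ only affects the constant and not the structure of the proof.
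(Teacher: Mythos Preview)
Your argument is correct: the weighted Cauchy--Schwarz/Schur-test approach handles this lemma cleanly, and your bookkeeping on the dyadic sums is sound (the only slightly loose remark is that $N_*$ need not lie within a factor $2$ of $N/\gamma$ when $N/\gamma<1$, but the inequality $N/N_*\le\gamma$ that you actually use holds regardless, since $N_*\ge N/\gamma$ by definition). Note that the paper does not prove this lemma at all---it simply cites \cite{BGT1}, p.~282---so your proof supplies the details the paper omits; the argument you give is in fact the standard one and is essentially what appears in \cite{BGT1}.
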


Now, we provide proof of the bilinear estimate stated in Proposition \ref{FirstBilinearEstimate}.

\begin{proof}[Proof of  Proposition \ref{FirstBilinearEstimate}]
Using  the  duality relation between $X^{s, -b'}$ and $(X^{s, -b'})^{\ast}\approx X^{-s, b'}$  (see appendix for the proof of this fact) to prove \eqref{FirstBilinearEstimateInequality}, it  suffices to establish the following inequality
\begin{equation}\label{DualityInequalityEquivalence}
|\int_{\mathbb{R} \times M} u_{1} u_{2} \overline{u_{0}} dx dt| \leq C \|u_{1}\|_{X^{s, b_{1}}} \|u_{2}\|_{X^{s, b_{1}}} \|u_{0}\|_{X^{-s, b'}},
\end{equation}
for all $u_{0} \in X^{-s, b'}$. We start inserting the dyadic decompositions on the spatial frequencies of
\[
u_{j} = \sum_{j} u_{j N_{j}}, \; \; (j = 0, 1, 2) 
\]in the left hand side of \ref{DualityInequalityEquivalence}), where 
\[
u_{j N_{j}} := \sum_{k: N_{j} \leq \mu_{k} \leq 2 N_{j}} P_{k} u.
\]
Related to this dyadic decomposition, we have the following equivalences of norms
 \begin{equation}\label{XsbDyadicNormEquivalence}
    \|u_{j}\|_{X^{s,b}}^{2} \sim \sum_{N_{j} } \|u_{j N_{j}}\|_{X^{s,b}}^{2} \sim \sum_{N_{j} } N_{j}^{2s} \|u_{j N_{j}}\|_{X^{0,b}}^{2},
 \end{equation}
 for $j = 0,1, 2$,  which will be very useful in the next steps of the proof.
 
 Let 
 \begin{equation}\label{I}
  I := \int_{\mathbb{R} \times M} u_{1} u_{2} \overline{u_{0}} dx dt.  
 \end{equation}
 Using the triangle inequality, we obtain
\begin{equation}\label{ISplit}
|I| \leq \sum_{N_{0}, N_{1}, N_{2}}  |\int_{\mathbb{R} \times M} u_{1 N_{1}} u_{2N_{2}} \overline{u_{0N_{0}}} dx dt|.
\end{equation}

 Observe that the summation in $(\ref{ISplit})$  is over all triples of dyadic numbers  $(N_{0}, N_{1}, N_{2})$. To simplify the notation, we write $N = (N_{0}, N_{1}, N_{2})$  and

\begin{equation}\label{IN}
I(N) := \int_{\mathbb{R} \times M} u_{1 N_{1}} u_{2N_{2}} \overline{u_{0N_{0}}} dx dt.
\end{equation}

Under these considerations, we split the summation $\sum_{N}|I(N)|$ in the following manner
\begin{equation}\label{FirstSlip}
 \sum_{N} |I(N)| \leq \sum_{N: N_{2} \leq N_{1}} |I(N)| + \sum_{N: N_{1} < N_{2}} |I(N)|.
\end{equation}

Moreover, we split  the summation $\sum_{N: N_{2} \leq N_{1}} |I(N)|$ in two frequency regimes
\begin{equation}\label{SummationN2LeqN1}
 \sum_{N: N_{2} \leq N_{1}} |I(N)| \leq \sum_{N: N_{2} \leq N_{1}, N_{0} \leq C N_{1}}|I(N)| + \sum_{N: N_{2} \leq N_{1}, N_{0} > C N_{1}}|I(N)| =: \Sigma_{1} + \Sigma_{2}.
\end{equation}
Similarly, the summation $\sum_{N: N_{1} < N_{2}} |I(N)| $ is also splitted in two frequency regimes
\begin{equation}\label{SummationN1LeqN2}
  \sum_{N:  N_{1} < N_{2} } |I(N)| \leq \sum_{N: N_{1} < N_{2}, N_{0} \leq C N_{2}}|I(N)| + \sum_{N:  N_{1} < N_{2}, N_{0} > C N_{2}}|I(N)|=: \Sigma_{3} + \Sigma_{4}.
\end{equation}

Therefore, combining $(\ref{ISplit}), (\ref{FirstSlip}), (\ref{SummationN2LeqN1})$ and  $(\ref{SummationN1LeqN2})$ we arrive
at
\begin{equation}\label{IMajorationSigma1234}
|I| \leq \Sigma_{1} + \Sigma_{2} + \Sigma_{3} + \Sigma_{4}.
\end{equation}

In what follows, we estimate  $\Sigma_{1}$,  $\Sigma_{2}$ and $\Sigma_{3}$, $\Sigma_{4}$ in  the four frequency regimes that appear in  \eqref{SummationN2LeqN1}) and \eqref{SummationN1LeqN2} respectively. To  simplify  the notations further, in sequel, for $j = 1,2$ we define $u^{(j)}_{N_{j}} := u_{j N_{j}}$ and  $u^{(0)}_{N_{0}} := \overline{u_{0 N_{0}}}$.

First we will estimate   $\Sigma_{1}$ and  $\Sigma_{2}$ considering the frequency regimes with condition $N_{2} \leq N_{1} $. Using symmetry,  estimates for the terms  $\Sigma_{3}$ and $\Sigma_{4}$  in the frequency regimes with condition $N_{2} > N_{1} $ follow with  analogous arguments.
For this purpose, we need  the following lemma.

\begin{lema}\label{BoundForI1}
Let, $N_{2} \leq N_{1}$ and $I(N)$ be as defined in \eqref{IN}. If \eqref{B} and \eqref{C} hold for $s > s_{0}$, then for all $s' > s_{0}$ there exist
$0 < b',\,b_{1} < \frac{1}{2}$ and  $C > 0$ such that the following estimates hold
\begin{equation}\label{FirstBoundForI1}
   |I(N)| \leq C N_{2}^{s'}  \|u^{(0)}_{N_{0}}\|_{X^{0, b_{1}}} \|u^{(1)}_{N_{1}}\|_{X^{0, b_{1}}} \|u^{(2)}_{N_{2}}\|_{X^{0, b_{1}}},
\end{equation}

\begin{equation}\label{SecondBoundForI1}
|I(N)|\leq C \Big(\frac{N_{1}}{N_{0}}\Big)^{2} N_{2}^{s'} \| u^{(1)}_{N_{1}}\|_{X^{0,b'}} \| u^{(0)}_{N_{0}}\|_{X^{0,b'}} \| u^{(2)}_{N_{2}}\|_{X^{0,b'}}.
\end{equation}
\end{lema}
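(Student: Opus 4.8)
The plan is to derive each of the two bounds by interpolating, in the time--regularity exponent, between an \emph{expensive} estimate that uses the bilinear Strichartz inequalities \eqref{B}--\eqref{D} (hence is only available for exponent $>\tfrac12$) and a \emph{cheap} estimate that uses only H\"older's inequality, the Sobolev embeddings $H^{3/2}(M)\hookrightarrow L^\infty(M)$ and $H^{1/6}(\mathbb{R})\hookrightarrow L^3(\mathbb{R})$, and the inclusion $X^{0,1/6}\hookrightarrow L^3(\mathbb{R};L^2(M))$ of Proposition \ref{BasicXsb}(ii) (hence is available with exponent as low as $\tfrac16$). Lemma \ref{InterpolationLemma} is arranged exactly so that, given $s'>s_0$, one can pick the interpolation parameter $\theta$ and a small $\varepsilon$ for which the interpolated power of $N_2$ is below $s'$ while the interpolated time exponent $\tfrac{\theta}{6}+(\tfrac12+\varepsilon)(1-\theta)$ stays below $\tfrac12$; that last number is the $b_1$ (resp.\ $b'$) of the statement. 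Since $N_2\le N_1$, in every use of \eqref{B} and \eqref{D} one has $\min(N_1,N_2)=N_2$.

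For \eqref{FirstBoundForI1}: the expensive bound is Cauchy--Schwarz in $L^2(\mathbb{R}\times M)$ pairing $u^{(1)}_{N_1}u^{(2)}_{N_2}$ against $u^{(0)}_{N_0}$, followed by \eqref{B} on the first factor and $\|u^{(0)}_{N_0}\|_{L^2}=\|u^{(0)}_{N_0}\|_{X^{0,0}}\le\|u^{(0)}_{N_0}\|_{X^{0,b}}$, giving $|I(N)|\lesssim N_2^{s_0+\delta}\prod_j\|u^{(j)}_{N_j}\|_{X^{0,b}}$ with $b=\tfrac12+\varepsilon$. The cheap bound is H\"older in time with exponents $(3,3,3)$, then H\"older in $x$ placing the lowest--frequency factor $u^{(2)}_{N_2}$ in $L^\infty(M)$ (so $\|u^{(2)}_{N_2}\|_{L^\infty(M)}\lesssim\|u^{(2)}_{N_2}\|_{H^{3/2}(M)}\lesssim N_2^{3/2}\|u^{(2)}_{N_2}\|_{L^2(M)}$) and the other two in $L^2(M)$, and finally Proposition \ref{BasicXsb}(ii), giving $|I(N)|\lesssim N_2^{3/2}\prod_j\|u^{(j)}_{N_j}\|_{X^{0,1/6}}$. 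Complex interpolation of the trilinear form $I(N)$ in the common time exponent between these two bounds, with the $\theta$ of Lemma \ref{InterpolationLemma}, yields \eqref{FirstBoundForI1}.

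For \eqref{SecondBoundForI1} the extra gain $(N_1/N_0)^2$ comes from integrating by parts twice, using that $u^{(0)}_{N_0}$ lives on eigenspaces with $\mu_k\sim N_0$: writing $\overline{u^{(0)}_{N_0}}=(-\Delta)\overline{w}$ with $w:=\sum_{\mu_k\sim N_0}\mu_k^{-2}P_k u_0$, so that $\|w\|_{L^2}\lesssim N_0^{-2}\|u^{(0)}_{N_0}\|_{L^2}$ and $w$ obeys the same boundary condition as $u^{(1)}_{N_1}u^{(2)}_{N_2}$, Green's identity gives $I(N)=\int(-\Delta(u^{(1)}_{N_1}u^{(2)}_{N_2}))\overline{w}$ with no boundary contribution --- the one place the boundary value problem structure enters. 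Expanding $-\Delta(fg)=(-\Delta f)g+f(-\Delta g)-2\,g(\nabla f,\nabla g)$, using $\|-\Delta u^{(j)}_{N_j}\|_{X^{0,b}}\lesssim N_j^2\|u^{(j)}_{N_j}\|_{X^{0,b}}$ (the factor $-\Delta u^{(j)}_{N_j}$ is again localized at $\mu\sim N_j$), one bounds the first two terms by \eqref{B} and the mixed term by \eqref{D}; since $N_2\le N_1$ all three are $\lesssim N_1^2 N_2^{s_0+\delta}\|u^{(1)}_{N_1}\|_{X^{0,b}}\|u^{(2)}_{N_2}\|_{X^{0,b}}$, whence the expensive bound $|I(N)|\lesssim (N_1/N_0)^2 N_2^{s_0+\delta}\prod_j\|u^{(j)}_{N_j}\|_{X^{0,b}}$, $b=\tfrac12+\varepsilon$. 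The cheap counterpart is the same computation with the three Leibniz terms estimated by naive Sobolev bounds ($H^{3/2}(M)\hookrightarrow L^\infty(M)$ on the lowest--frequency factor, $\|-\Delta u^{(j)}_{N_j}\|_{L^2}\lesssim N_j^2\|u^{(j)}_{N_j}\|_{L^2}$, $\|\nabla u^{(j)}_{N_j}\|_{L^2}\lesssim N_j\|u^{(j)}_{N_j}\|_{L^2}$), H\"older in time, and Proposition \ref{BasicXsb}(ii); using $N_2\le N_1$ the stray powers collapse and $|I(N)|\lesssim (N_1/N_0)^2 N_2^{3/2}\prod_j\|u^{(j)}_{N_j}\|_{X^{0,1/6}}$. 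As $(N_1/N_0)^2$ is common to both endpoints, interpolating preserves it and pushes the time exponent below $\tfrac12$, which is \eqref{SecondBoundForI1}.

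The step I expect to be most delicate is the mixed gradient term $g(\nabla u^{(1)}_{N_1},\nabla u^{(2)}_{N_2})$ produced by the double integration by parts: \eqref{D} is stated for $(\nabla_x f)\,h$ with $h$ \emph{spectrally localized in a dyadic window}, whereas here both factors carry a derivative, so \eqref{D} does not apply verbatim. The remedy is to note that $\nabla u^{(2)}_{N_2}$ is, up to rapidly decaying tails, concentrated at frequencies $\mu\lesssim N_2$ (e.g.\ via the Bochner identity $\Delta\nabla f=\nabla\Delta f+\mathrm{Ric}(\nabla f)$, which makes $\nabla e_k$ an almost--eigenfunction of the connection Laplacian with eigenvalue $\mu_k^2+O(1)$), decompose it dyadically, apply \eqref{D} to each piece, and recombine by Cauchy--Schwarz; this recovers the factor $N_1 N_2^{1+s_0+\delta}\lesssim N_1^2 N_2^{s_0+\delta}$ needed above. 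A secondary, purely bookkeeping, point is to be sure that both cheap endpoint estimates come out as pure powers of $N_2$ (times exactly $(N_1/N_0)^2$ in the second case), with no leftover $N_0$ or $N_1$ powers; this is why the lowest--frequency factor must go into $L^\infty(M)$ and why the full two--derivative integration by parts is used even in the cheap bound for \eqref{SecondBoundForI1}.
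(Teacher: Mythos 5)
Your overall architecture is the same as the paper's: for \eqref{FirstBoundForI1} an interpolation between a ``cheap'' H\"older--Sobolev--$X^{0,1/6}$ bound and an ``expensive'' Cauchy--Schwarz plus \eqref{B} bound, and for \eqref{SecondBoundForI1} the trick of writing $u^{(0)}_{N_0}=-N_0^{-2}\Delta_g(T(u^{(0)}_{N_0}))$ (your $w$ is exactly $N_0^{-2}T(u^{(0)}_{N_0})$), Green's theorem with vanishing boundary terms, the Leibniz expansion of $\Delta_g(u^{(1)}_{N_1}u^{(2)}_{N_2})$ into three terms as in \eqref{ThreeIntegrals}, and the same two-endpoint interpolation on each piece. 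Two points deserve comment. First, the interpolation step cannot be carried out by simply taking the geometric mean of the two trilinear bounds: that mean is controlled by $\prod_j\|u^{(j)}_{N_j}\|_{X^{0,1/6}}^{\theta}\|u^{(j)}_{N_j}\|_{X^{0,b}}^{1-\theta}$, which by log-convexity \emph{dominates} $\prod_j\|u^{(j)}_{N_j}\|_{X^{0,b_1}}$, i.e.\ the inequality points the wrong way. Your phrase ``complex interpolation of the trilinear form'' is salvageable if you genuinely invoke multilinear complex interpolation together with the identity $[X^{0,1/6},X^{0,b}]_{\theta}=X^{0,b_1}$; the paper instead makes this elementary by a further dyadic decomposition in the modulation variable $\langle\tau+\mu_k^2\rangle\sim L_j$, so that both endpoint bounds \eqref{I12} and \eqref{I22} carry the \emph{same} $L^2$ norms on the right and only the explicit powers of $(L_0L_1L_2)$ get averaged, after which Cauchy--Schwarz in each $L_j$ (using the room $b'<b_1<\tfrac12$) reassembles the $X^{0,b_1}$ norms via \eqref{X0bNormEquivalenceA}. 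You should either do this decomposition or cite the multilinear interpolation theorem explicitly.

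Second, the step you flag as most delicate --- the mixed gradient term --- is where your proposal actually goes astray. There is no need to apply \eqref{D} to a product of two gradient factors, and the remedy you sketch (that $\nabla u^{(2)}_{N_2}$ is spectrally concentrated at frequencies $\lesssim N_2$ up to rapidly decaying tails, via a Bochner identity) is precisely the kind of statement that is problematic on a manifold with boundary, where gradients do not commute nicely with the Dirichlet or Neumann spectral projectors; as written it is not justified. The correct (and simpler) move, which is what the paper does in \eqref{MM15}--\eqref{MM16}, is to apply the pointwise Cauchy--Schwarz bound $|(\nabla u^{(1)}_{N_1},\nabla u^{(2)}_{N_2})_g|\le|\nabla u^{(1)}_{N_1}||\nabla u^{(2)}_{N_2}|$ and then split the $L^2(\mathbb{R}\times M)$ integral so that $\nabla u^{(1)}_{N_1}$ goes alone into $L^2$ (costing $N_1$ by Bernstein) while $\nabla u^{(2)}_{N_2}$ is paired with the \emph{non-differentiated}, genuinely dyadically localized factor $T(u^{(0)}_{N_0})$, to which \eqref{D} applies verbatim with $\Lambda=N_2$, $\Gamma=N_0$. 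This yields the factor $N_1N_2^{1+s_0+\delta}\le N_1^2N_2^{s_0+\delta}$ you want without any commutator analysis. With these two repairs your argument coincides with the paper's proof.
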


\begin{proof} Applying  H\"older's inequality in \eqref{IN}, we obtain
\begin{equation}\label{MM1}
\begin{split}
  |I(N)|  & \leq \int_{\mathbb{R} \times M} \prod_{j = 0}^{2} |u^{(j)}_{N_{j}}| dx dt \leq \|u^{(2)}_{N_{2}}\|_{L^{3}(\mathbb{R}, L^{\infty})} \|u^{(0)}_{N_{0}}\|_{L^{3}(\mathbb{R}, L^{2})} \|u^{(1)}_{N_{1}}\|_{L^{3}(\mathbb{R}, L^{2})}.  \\
\end{split}
\end{equation}
Using the Sobolev embedding $H^{\frac{3}{2}}(M) \hookrightarrow L^{\infty}(M)$, $(\ref{MM1})$ yields

\begin{equation}\label{MM2}
 |I(N)|  \leq  C N_{2}^{\frac{3}{2} } \prod_{ j = 0}^{2} \|u^{(j)}_{N_{j}}\|_{L^{3}(\mathbb{R}, L^{2})}.
\end{equation}
Now, using the property $(ii)$ of  Proposition \ref{BasicXsb}   we obtain 
\begin{equation}\label{I1}
 |I(N)|  \leq  C N_{2}^{3/2} \prod_{ j = 0}^{2} \|u^{(j)}_{N_{j}}\|_{X^{0, \frac{1}{6}}} .
\end{equation}

On the other hand, applying the  Cauchy-Schwarz inequality in $(\ref{IN})$, we obtain
\begin{equation}\label{MM3}
|I(N)|  \leq \|u^{(0)}_{N_{0}} u^{(2)}_{N_{2}}\|_{L^{2}(\mathbb{R} \times M)} \|u^{(1)}_{N_{1}}\|_{L^{2}(\mathbb{R} \times M)}.
\end{equation}
Now, applying  the estimate \eqref{B} in Lemma \ref{LemaB} in the estimate  \eqref{MM3}, we find that for $b> \frac{1}{2}$,

\begin{equation}\label{MM4}
|I(N)| \leq C (\min (N_{0}, N_{2}))^{s_{0} + \delta}  \|u^{(0)}_{N_{0}}\|_{X^{0, b}(\mathbb{R} \times M)} \|u^{(2)}_{N_{2}}\|_{X^{0, b}(\mathbb{R} \times M)}\|u^{(1)}_{N_{1}}\|_{L^{2}(\mathbb{R} \times M)},
\end{equation}
where $s_{0} = \frac{2}{3}$ and $0 < \delta << 1$. 

Hence, from \eqref{MM4} we obtain
\begin{equation}\label{I2}
|I(N)| \leq C N_{2}^{s_{0} + \delta} \prod_{j = 0}^{2} \|u^{(j)}_{N_{j}}\|_{X^{0, b}(\mathbb{R} \times M)}.
\end{equation}

 Now, we further  decompose each $u^{(j)}_{N_{j}}$ (for $j = 0,1,2$) according to the sum of $u^{(j)}_{N_{j}}$ localized in each  spacetime frequency interval of the form $L_{j} \leq \langle \tau + \mu_{k}\rangle < 2 L_{j}$  for  $j = 0,1,2$. More explicitly, for $j = 0, 1, 2$, we decompose
\begin{equation}\label{DyadicDecompostionModulationVariableA}
 u^{(j)}_{N_{j}} = \sum_{L_{j}} u^{(j)}_{N_{j}L_{j}}, \qquad{\mathrm{with} }\qquad u^{(j)}_{N_{j}L_{j}} = \textbf{1}_{L_{j} \leq \langle i \partial_{t} + \Delta \rangle < 2 L_{j}} (u^{(j)}_{N_{j}}),
\end{equation}
where $L_{j}$ denote the dyadic integers. Using \eqref{A2} we obtain the following norm
equivalences for a generic function $u$
\begin{equation}\label{X0bNormEquivalenceA}
 \|u\|_{X^{0,b}}^{2} \sim \sum_{L_{k}} L_{k}^{2b} \|u_{L_{k}}\|_{L^{2}(\mathbb{R} \times M)}^{2} \sim \sum_{L_{k}} \|u_{L_{k}}\|_{X^{0,b}}^{2}.
\end{equation}

Observe that, denoting $L = (L_{0}, L_{1}, L_{2})$, we have
\begin{equation}\label{f-m1}
|I(N)| = |\sum _{L} I(N,L)| \leq \sum _{L} |I(N,L)|,
\end{equation}
where
\begin{equation}\label{f-m2}
I(N,L) := \int_{\mathbb{R} \times M} u^{(0)}_{N_{0}L_{0}}  u^{(1)}_{N_{1}L_{1}}  u^{(2)}_{N_{2}L_{2}} dx dt.
\end{equation}

 Next, using the estimates $(\ref{I1})$ and $(\ref{I2})$, which are also true if we replace the functions $u^{(j)}_{N_{j}}$ by the functions $u^{(j)}_{N_{j}L_{j}}$, and then, using \eqref{X0bNormEquivalenceA} with $u^{(j)}_{N_{j}}$ instead of $u$, we obtain
\begin{equation}\label{I12}
 |I(N, L)| \leq C N_{2}^{3/2} ( L_{0} L_{1} L_{2})^{\frac{1}{6}} \prod_{j = 0}^{2} \|u^{(j)}_{N_{j}L_{j}}\|_{L^{2}(\mathbb{R} \times M)},
\end{equation}
and
\begin{equation}\label{I22}
|I(N, L)| \leq C N_{2}^{s_{0} + \delta} (L_{0} L_{1}L_{2})^{b}  \prod_{ j = 0}^{2} \|u^{(j)}_{N_{j}L_{j}}\|_{L^{2}(\mathbb{R} \times M)}.
\end{equation}
Interpolating the inequalities  $(\ref{I12})$ and $(\ref{I22})$, we have for every $\theta \in (0,1)$ that
\begin{equation}\label{InterpolationBetweenI12andI22}
|I(N, L)| \leq C N_{2}^{\frac{3}{2} \theta + (1 - \theta) (s_{0} + \delta)} ( L_{0} L_{1} L_{2})^{ \frac{\theta}{6} + (1-\theta) b}  \prod_{j = 0}^{2} \|u^{(j)}_{N_{j}L_{j}}\|_{L^{2}(\mathbb{R} \times M)}.
\end{equation}

To continue the interpolation argument we  use Lemma \ref{InterpolationLemma}. It follows from Lemma \ref{InterpolationLemma} that for every $s_{0} < s'$ there exists $b'< 1/2$ such that
\begin{equation}\label{INLAfterInterpolation}
|I(N, L)| \leq C N_{2}^{s'} (L_{0} L_{1} L_{2})^{b'}  \prod_{ j = 0}^{2} \|u^{(j)}_{N_{j}L_{j}}\|_{L^{2}(\mathbb{R} \times M)}.
\end{equation}

Using \eqref{INLAfterInterpolation} in \eqref{f-m1}, we obtain that
\[
|I(N)|  \leq \sum_{L} |I(N, L)|  \leq\sum_{L_{0}, L_{1}, L_{2}} C N_{2}^{s'} (L_{0} L_{1} L_{2})^{b'}  \prod_{ j = 0}^{2} \|u^{(j)}_{N_{j}L_{j}}\|_{L^{2}(\mathbb{R} \times M)}.
\]

Now, choosing $b' < b_{1}$, where $b_{1} \in (b', \frac{1}{2})$ and using the norm equivalence \eqref{X0bNormEquivalenceA}, we have
\begin{equation}\label{f-m3}
\begin{split}
 |I(N)|  & \leq C N_{2}^{s'} \sum_{L_{0}, L_{1}, L_{2}} (L_{0} L_{1} L_{2})^{b' - b_{1}} \|u^{(0)}_{N_{0} L_{0}}\|_{X^{0, b_{1}}}
\|u^{(1)}_{N_{1} L_{1}}\|_{X^{0, b_{1}}} \|u^{(2)}_{N_{2} L_{2}}\|_{X^{0, b_{1}}}\\
 & = C N_{2}^{s'} \sum_{L_{0}, L_{1}} (L_{0} L_{1})^{b'-b_{1}} \|u^{(0)}_{N_{0} L_{0}}\|_{X^{0, b_{1}}}
\|u^{(1)}_{N_{1}L_{1}}\|_{X^{0, b_{1}}} \left(\sum_{L_{2}} L_{2}^{b' - b_{1}} \|u^{(2)}_{N_{2}L_{2}}\|_{X^{0, b_{1}}} \right).
\end{split}
\end{equation}

An application of the Cauchy-Schwarz inequality  in the summation involving $L_{2}$ in \eqref{f-m3} yields
\begin{equation}
\begin{split}\label{f-m4}
 |I(N)| & \leq C N_{2}^{s'} \sum_{L_{0}, L_{1}}  \left(\prod_{j = 0}^{1} L_{j}^{b'-b_{1}}   \|u^{(j)}_{N_{j}L_{j}}\|_{X^{0, b_{1}}}\right) \left(\sum_{L_{2}} L_{2}^{2 (b' - b_{1})}\right)^{\frac{1}{2}}  \left( \sum_{L_{2}} \|u^{(2)}_{N_{2}L_{2}}\|_{X^{0, b_{1}}}^{2}\right)^{\frac{1}{2}} \\
& \leq C N_{2}^{s'} \|u_{N_{2}}^{(2)}\|_{X^{0, b_{1}}}\sum_{L_{0}, L_{1}}  \left(\prod_{j = 0}^{1} L_{j}^{b'-b_{1}}   \|u^{(j)}_{N_{j}L_{j}}\|_{X^{0, b_{1}}}\right) .
\end{split}
\end{equation}

In a similar manner,  using  the Cauchy-Schwarz inequality  in the summations in $L_{0}$ and $L_{1}$ (as was done in the summation involving $L_{2}$) in \eqref{f-m4} and applying
 \eqref{X0bNormEquivalenceA}, we obtain the desired estimate \eqref{FirstBoundForI1}
\begin{equation*}
|I(N)| \leq C N_{2}^{s'}  \|u^{(0)}_{N_{0}}\|_{X^{0, b_{1}}} \|u^{(1)}_{N_{1}}\|_{X^{0, b_{1}}} \|u^{(2)}_{N_{2}}\|_{X^{0, b_{1}}}.
\end{equation*}

Now, we move to prove the estimate \eqref{SecondBoundForI1}. For this,  we need to establish a new estimate for $I(N)$. We start noting that, as the functions $u^{(j)}_{N_{j}}$ are localized at frequency $\sim N_{j}$, we have
\[
u^{(j)}_{N_{j}} = \sum_{\mu_{k} \sim N_{j}} c_{k} e_{k},
\]
where $\mu_{k} \sim N_{j}$ means that $\mu_{k} \in [N_{j}, 2 N_{j}]$ and $c_{k}:=(u^{(j)}_{N_{j}}, e_{k})_{L^{2}}$.
Of course, $e_{k}$ are the eigenfunctions with  eigenvalues $\mu_{k}^{2}$ that is, $-\Delta_{g} e_{k} = \mu_{k}^{2} e_{k}$. For these functions,
we define the operators $T$ and $V$ as follows
\begin{equation}\label{TVOperatorsDefinition}
T(u^{(j)}_{N_{j}}) = \sum_{k: \mu_{k} \sim N_{j}} c_{k} \Big(\frac{N_{j}}{\mu_{k}}\Big)^{2} e_{k}, \qquad{  } \mbox{ and }\qquad{} V(u^{(j)}_{N_{j}}) = \sum_{ k: \mu_{k} \sim N_{j}} c_{k} \Big(\frac{\mu_{k}}{N_{j}}\Big)^{2} e_{k}.
\end{equation}

Observe that, for these operators one has the following  norm equivalences
\begin{equation}\label{TVEquivalenceNorm}
 \|T(u^{(j)}_{N_{j}})\|_{X} \sim \|u^{(j)}_{N_{j}}\|_{X} \sim \|V(u^{(j)}_{N_{j}})\|_{X},
\end{equation}
where $X = L^{2}$ or $ X^{0,b}$. 

Fixing  this terminology, we can write
\[
u^{(j)}_{N_{j}} = - \frac{1}{N_{j}^{2}} \Delta_{g} \Big(\sum_{\mu_{k} \sim N_{j}} c_{k} \Big(\frac{N_{j}}{\mu_{k}}\Big)^{2} e_{k}\Big) = - \frac{1}{N_{j}^{2}}  \Delta_{g} (T(u^{(j)}_{N_{j}})).
\]
In particular,
\[
I(N) = \int_{\mathbb{R} \times M} \prod_{j = 0}^{2} u^{(j)}_{N_{j}}   dx dt = - \frac{1}{N_{0}^{2}} \int_{\mathbb{R} \times M} \Delta_{g} (T(u^{(0)}_{N_{0}})) \prod_{j = 1}^{2} u^{(j)}_{N_{j}}   dx dt.
\]

Now, we apply the Green's theorem \footnote{The Green's theorem states that
\[
\int_{M} (u \Delta v - v \Delta u) dx = \int_{\partial M} (u \frac{\partial v}{\partial \nu} - v \frac{\partial u}{\partial \nu}) d \sigma
\]
where $\partial/\partial \nu$ denotes the normal derivative on the boundary and $d \sigma$ is the induced measure on $\partial M$. Since
we are either assuming Dirichlet or Neumann boundary conditions, all boundary integrals vanish.}
\[
\int_{M} h \Delta f  - f \Delta h dx = \int_{\partial M} h \frac{\partial f}{ \partial \nu}  - f \frac{\partial h}{ \partial \nu} d\sigma,
\]
which, due to the boundary conditions (i.e.,
$u_{N_{j}}^{(j)} \mid_{\partial M} = 0$  
or $  \partial_{\nu} u_{N_{j}}^{(j)} \mid_{\partial M} = 0$ ), leads to the identity
\begin{equation}\label{IntegralI}
I(N) = - \frac{1}{N_{0}^{2}} \int_{\mathbb{R} \times M}  T(u^{(0)}_{N_{0}}) \Delta_{g}\left( u^{(1)}_{N_{1}} u^{(2)}_{N_{2}}\right)   dx dt.
\end{equation}
Next, we apply a general formula \footnote{We have that $( \nabla_{g} f, \nabla_{g}h)_{g}$ is the pointwise scalar product with  respect to the metric  $g$ of $\nabla_{g} f$ and $\nabla_{g} h$.}
\begin{equation}\label{LaplacianOfProduct}
\Delta_{g} (f h ) = h \Delta_{g} f + f \Delta_{g} h + 2 (\nabla_{g} f, \nabla_{g} h)_{g},
\end{equation}
in the identity \eqref{IntegralI}, to get
\begin{equation}\label{INBreak}
 I(N) = I_{1}(N) + I_{2}(N) + I_{3}(N),
\end{equation}
where
\begin{equation}\label{ThreeIntegrals}
\begin{cases}
 I_{1}(N) := - \frac{1}{N_{0}^{2}} \int_{\mathbb{R} \times M}  T(u^{(0)}_{N_{0}}) u^{(1)}_{N_{1}} \Delta_{g}( u^{(2)}_{N_{2}}) dx dt,\\
 I_{2}(N) := - \frac{1}{N_{0}^{2}} \int_{\mathbb{R} \times M}  T(u^{(0)}_{N_{0}}) u^{(2)}_{N_{2}} \Delta_{g}( u^{(1)}_{N_{1}}) dx dt, \\
 I_{3}(N) := - \frac{2}{N_{0}^{2}} \int_{\mathbb{R} \times M}  T(u^{(0)}_{N_{0}}) (\nabla u^{(1)}_{N_{1}},\nabla u^{(2)}_{N_{2}})_{g} dx dt.
\end{cases}
\end{equation}

In what follows, we estimate the terms $ I_{1}(N)$,  $I_{2}(N)$ and $ I_{3}(N)$ separately.

\noindent{\bf Estimate for the Term  $I_{1}(N)$.}
From  $(\ref{TVOperatorsDefinition})$, we can deduce that
\begin{equation}\label{DeltaUjNj}
 \Delta_{g} (u^{(j)}_{N_{j}}) = \sum_{\mu_{k} \sim N_{j}} c_{k} \Delta_{g}(e_{k}) = - N_{j}^{2}\sum_{\mu_{k} \sim N_{j}} c_{k}  \Big(\frac{\mu_{k}}{N_{j}}\Big)^{2}e_{k} = - N_{j}^{2} V(u^{(j)}_{N_{j}}).
\end{equation}
Therefore, if $j  = 2$ in  \eqref{DeltaUjNj},
\begin{equation}\label{MM5}
 \Delta_{g}(u^{(2)}_{N_{2}}) = - N_{2}^{2} V(u^{(2)}_{N_{2}}).
\end{equation}
Now, considering the definition of  the term  $I_{1}$ in $(\ref{ThreeIntegrals})$, using $(\ref{MM5})$ and applying the H\"older's inequality,  we find
\begin{equation}\label{MM6}
 |I_{1}(N)| \leq \Big(\frac{N_{2}}{N_{0}}\Big)^{2}  \int_{\mathbb{R}} \|T(u^{(0)}_{N_{0}})\|_{L^{2}} \|V( u^{(2)}_{N_{2}})\|_{L^{2}} \| u^{(1)}_{N_{1}}\|_{L^{\infty}} dt.
\end{equation}

Using the Sobolev embedding $H^{\frac{3}{2}}(M) \hookrightarrow L^{\infty}(M)$ and  norm equivalence $(\ref{TVEquivalenceNorm})$ with $X = L^{2}$,
we deduce from $(\ref{MM6})$ that
\begin{equation}\label{MM7}
\begin{split}
|I_{1}(N)| & \leq \Big(\frac{N_{2}}{N_{0}}\Big)^{2}N_{1}^{\frac{3}{2}} \int_{\mathbb{R}} \|T(u^{(0)}_{N_{0}})\|_{L^{2}} \|V( u^{(2)}_{N_{2}})\|_{L^{2}} \|u^{(1)}_{N_{1}}\|_{L^{2}} dt\\
 &  \leq \Big(\frac{N_{2}}{N_{0}}\Big)^{2}N_{1}^{\frac{3}{2}} \int_{\mathbb{R}} \|u^{(0)}_{N_{0}}\|_{L^{2}} \|u^{(2)}_{N_{2}}\|_{L^{2}} \|u^{(1)}_{N_{1}}\|_{L^{2}} dt.
\end{split}
\end{equation}

Thus, applying H\"older's inequality in \eqref{MM7} and using the property $(ii)$ in Proposition \ref{BasicXsb}, we get
\begin{equation}\label{I1N2N1a}
|I_{1}(N)| \leq \Big(\frac{N_{1}}{N_{0}} \Big)^{2} N_{2}^{\frac{3}{2}}  \|u^{(0)}_{N_{0}}\|_{X^{0, \frac{1}{6}}} \|u^{(1)}_{N_{1}}\|_{X^{0, \frac{1}{6}}}
\|u^{(2)}_{N_{2}}\|_{X^{0, \frac{1}{6}} }.
\end{equation}

On the other hand, using \eqref{MM5} and the Cauchy-Schwarz inequality in the term  $I_{1}$ in \eqref{ThreeIntegrals}, we obtain
\begin{equation}\label{MM8}
\begin{split}
|I_{1}(N)| &\leq C\Big(\frac{N_{2}}{N_{0}}\Big)^{2} \int_{\mathbb{R}} \| u^{(1)}_{N_{1}}\|_{L^{2}( M)} \| T(u^{(0)}_{N_{0}}) V(u^{(2)}_{N_{2}}) \|_{L^{2}( M)} dt   \\
 & \leq C \Big(\frac{N_{2}}{N_{0}}\Big)^{2}  \| u^{(1)}_{N_{1}}\|_{L^{2}(\mathbb{R} \times M)} \| T(u^{(0)}_{N_{0}}) V(u^{(2)}_{N_{2}}) \|_{L^{2}(\mathbb{R} \times M)}.
\end{split}
\end{equation}

Next, using Lemma \ref{LemaB} and the norm equivalence \eqref{TVEquivalenceNorm} with $X = X^{0,b}$, the estimate
\eqref{MM8} yields
\begin{equation}\label{I1N2N1b}
\begin{split}
|I_{1}(N)| &\leq C \Big(\frac{N_{2}}{N_{0}}\Big)^{2} (\min (N_{0}, N_{2}))^{s_{0}+ \delta} \| u^{(1)}_{N_{1}}\|_{X^{0,b}} \|T( u^{(0)}_{N_{0}})\|_{X^{0,b}} \| V(u^{(2)}_{N_{2}})\|_{X^{0,b}} \\
 & \leq C \Big(\frac{N_{1}}{N_{0}} \Big)^{2} N_{2}^{s_{0}+ \delta} \| u^{(1)}_{N_{1}}\|_{X^{0,b}} \|u^{(0)}_{N_{0}}\|_{X^{0,b}} \| u^{(2)}_{N_{2}}\|_{X^{0,b}} .
\end{split}
\end{equation}

Now, we interpolate $(\ref{I1N2N1a})$ and $(\ref{I1N2N1b})$ as in $(\ref{InterpolationBetweenI12andI22})$, and use Lemma $\ref{InterpolationLemma}$, to obtain
\begin{equation}\label{K1}
  |I_{1}(N)| \leq C \Big(\frac{N_{1}}{N_{0}} \Big)^{2} N_{2}^{s'} \| u^{(1)}_{N_{1}}\|_{X^{0,b'}} \| u^{(0)}_{N_{0}}\|_{X^{0,b'}} \| u^{(2)}_{N_{2}}\|_{X^{0,b'}}
\end{equation}
where we have $s_{0} < s'$ and $b' < \frac{1}{2}$

\noindent{\bf Estimate for the Term  $I_{2}(N)$.}
As in the estimate of the term $I_{1}(N)$, we use \eqref{DeltaUjNj} with $j = 1$  that is,  $\Delta_{g}(u^{(1)}_{N_{1}}) = - N_{1}^{2} V(u^{(1)}_{N_{1}})$ and apply H\"older's inequality in the term $I_{2}$ in \eqref{ThreeIntegrals}, to obtain
\begin{equation}\label{MM9}
 |I_{2}(N)| \leq \Big(\frac{N_{1}}{N_{0}}\Big)^{2}  \int_{\mathbb{R}} \|T(u^{(0)}_{N_{0}})\|_{L^{2}} \|V( u^{(1)}_{N_{1}})\|_{L^{2}} \| u^{(2)}_{N_{2}}\|_{L^{\infty}} dt.
\end{equation}

Now, using the Sobolev embedding $H^{\frac{3}{2}}(M) \hookrightarrow L^{\infty}(M)$ and  norm equivalence $(\ref{TVEquivalenceNorm})$ with $X = L^{2}$,
we deduce from $(\ref{MM9})$ that
\begin{equation}\label{MM10}
\begin{split}
|I_{2}(N)| & \leq \Big(\frac{N_{1}}{N_{0}}\Big)^{2}N_{2}^{\frac{3}{2}} \int_{\mathbb{R}} \|T(u^{(0)}_{N_{0}})\|_{L^{2}} \|V( u^{(1)}_{N_{1}})\|_{L^{2}} \|u^{(2)}_{N_{2}}\|_{L^{2}} dt\\
 &  \leq \Big(\frac{N_{1}}{N_{0}} \Big)^{2}N_{2}^{\frac{3}{2}} \int_{\mathbb{R}} \|u^{(0)}_{N_{0}}\|_{L^{2}} \|u^{(1)}_{N_{1}}\|_{L^{2}} \|u^{(2)}_{N_{2}}\|_{L^{2}} dt.
\end{split}
\end{equation}

Thus, applying H\"older's inequality in \eqref{MM10} and using the property $(ii)$ of Proposition \ref{BasicXsb}, we get
\begin{equation}\label{I2N2N1a}
|I_{2}(N)| \leq \Big(\frac{N_{1}}{N_{0}}\Big)^{2} N_{2}^{\frac{3}{2}}  \|u^{(0)}_{N_{0}}\|_{X^{0, \frac{1}{6}}} \|u^{(2)}_{N_{2}}\|_{X^{0, \frac{1}{6}} } \|u^{(1)}_{N_{1}}\|_{X^{0, \frac{1}{6}}}.
\end{equation}

On the other hand, using Cauchy-Schwarz inequality in the term $I_{2}$ in \eqref{ThreeIntegrals} we obtain that
\begin{equation}\label{MM11}
\begin{split}
|I_{2}(N)| &\leq C \Big(\frac{N_{1}}{N_{0}}\Big)^{2} \int_{\mathbb{R}}  \| T(u^{(0)}_{N_{0}}) u^{(2)}_{N_{2}} \|_{L^{2}( M)} \|V( u^{(1)}_{N_{1}})\|_{L^{2}( M)} dt   \\
 & \leq C \Big(\frac{N_{1}}{N_{0}}\Big)^{2}   \| T(u^{(0)}_{N_{0}}) u^{(2)}_{N_{2}} \|_{L^{2}(\mathbb{R} \times M)} \| V(u^{(1)}_{N_{1}})\|_{L^{2}(\mathbb{R} \times M)}.
\end{split}
\end{equation}

Next, using Lemma \ref{LemaB} in the estimate  \eqref{MM11} and  the relation \eqref{TVEquivalenceNorm} with $X = X^{0,b}$, we get
\begin{equation}\label{I2N2N1b}
\begin{split}
|I_{2}(N)| &\leq C \Big(\frac{N_{1}}{N_{0}} \Big)^{2} (\min (N_{0}, N_{2}))^{s_{0}+ \delta}  \|T( u^{(0)}_{N_{0}})\|_{X^{0,b}} \| u^{(2)}_{N_{2}}\|_{X^{0,b}} \| V(u^{(1)}_{N_{1}})\|_{X^{0,b}} \\
 & \leq C \Big(\frac{N_{1}}{N_{0}} \Big)^{2} N_{2}^{s_{0}+ \delta} \| u^{(0)}_{N_{0}}\|_{X^{0,b}} \|u^{(2)}_{N_{2}}\|_{X^{0,b}} \| u^{(1)}_{N_{1}}\|_{X^{0,b}}.
\end{split}
\end{equation}

Now, we can interpolate \eqref{I2N2N1a} with \eqref{I2N2N1b} as in \eqref{InterpolationBetweenI12andI22}, and use the Lemma \ref{InterpolationLemma} to obtain
\begin{equation}\label{K2}
 |I_{2}(N)| \leq C \Big(\frac{N_{1}}{N_{0}} \Big)^{2} N_{2}^{s'} \| u^{(1)}_{N_{1}}\|_{X^{0,b'}} \| u^{(0)}_{N_{0}}\|_{X^{0,b'}} \| u^{(2)}_{N_{2}}\|_{X^{0,b'}},
\end{equation}
where  $s_{0}< s'$ and  $b' < \frac{1}{2}$.

\noindent{\bf Estimate for the Term $I_{3}(N)$.}
To estimate the term $I_{3}(N)$, we use the inequality
\begin{equation}\label{ScalarProductInequality}
 |(\nabla_{g} f, \nabla_{g} h)_{g}| \leq |\nabla_{g} f| |\nabla_{g} h|,
\end{equation}

with $f = u_{N_{1}}^{(1)}$ and $h = u_{N_{2}}^{(2)} $, and  H\"older's inequality in the identity defining $I_{3}$ in $(\ref{ThreeIntegrals})$, to get
\begin{equation}\label{MM12}
|I_{3}(N)| \leq  \frac{C}{N_{0}^{2}} \int_{\mathbb{R}} \|T(u^{(0)}_{N_{0}})\|_{L^{2}}  \| \nabla u^{(1)}_{N_{1}}\|_{L^{2}} \|\nabla u^{(2)}_{N_{2}}\|_{L^{\infty}} dt
\end{equation}

Now, using  Sobolev embedding $H^{\frac{3}{2}}(M)\hookrightarrow L^{\infty}(M)$ and  the norm equivalence $(\ref{TVEquivalenceNorm})$ with $X = L^{2}$, we obtain from $(\ref{MM12})$
\begin{equation}\label{MM13}
|I_{3}(N)|   \leq  C \frac{N_{2}^{\frac{3}{2}}}{N_{0}^{2}} \int_{\mathbb{R}} \|u^{(0)}_{N_{0}}\|_{L^{2}} \| \nabla u^{(1)}_{N_{1}}\|_{L^{2}} \|\nabla u^{(2)}_{N_{2}}\|_{L^{2}} dt.
\end{equation}
Using the inequality $\|\nabla u_{N}\|_{L^{2}} \leq C N \| u_{N}\|_{L^{2}}$ we deduce from $(\ref{MM13})$ that
\begin{equation}\label{MM14}
|I_{3}(N)|  \leq  C \frac{N_{1} N_{2}^{\frac{3}{2} + 1}}{N_{0}^{2}} \int_{\mathbb{R}} \|u^{(0)}_{N_{0}}\|_{L^{2}} \|  u^{(1)}_{N_{1}}\|_{L^{2}} \| u^{(2)}_{N_{2}}\|_{L^{2}} dt.
\end{equation}

Using H\"older inequality in \eqref{MM14} and  applying the property $(ii)$ in Proposition \ref{BasicXsb}, we get
\begin{equation}\label{I3N2N1a}
 |I_{3}(N)| \leq  C \Big(\frac{N_{1}}{N_{0}}\Big)^{2} N_{2}^{\frac{3}{2}} \|u^{(0)}_{N_{0}}\|_{X^{0, \frac{1}{6}}} \|u^{(1)}_{N_{1}}\|_{X^{0, \frac{1}{6}}}
\|u^{(2)}_{N_{2}}\|_{X^{0,\frac{1}{6}}}.
\end{equation}

 On the other hand, using $(\ref{ScalarProductInequality})$ and  the Cauchy-Schwarz inequality in the term $I_{3}$ of $(\ref{ThreeIntegrals})$, we have
\begin{equation}\label{MM15}
|I_{3}(N)| \leq C \frac{1}{N_{0}^{2}} \| \nabla u^{(1)}_{N_{1}}\|_{L^{2}(\mathbb{R} \times M)} \| (\nabla u^{(2)}_{N_{2}}) T(u^{(0)}_{N_{0}})\|_{L^{2}(\mathbb{R} \times M)}.
\end{equation}

 Next, using the inequality $\|\nabla u_{N}\|_{L^{2}} \leq C N \| u_{N}\|_{L^{2}}$ and the  bilinear estimate \eqref{B} in \eqref{MM15}  we obtain
\begin{equation}\label{MM16}
 |I_{3}(N)|  \leq C \frac{N_{1}}{N_{0}^{2}} N_{2} (\min (N_{0}, N_{2}))^{s_{0} + \delta} \| u^{(1)}_{N_{1}}\|_{L^{2}(\mathbb{R} \times M)} \| u^{(2)}_{N_{2}}\|_{X^{0,b}} \| T(u^{(0)}_{N_{0}})\|_{X^{0,b}}.
\end{equation}

Thus, using \eqref{TVEquivalenceNorm}  we conclude from \eqref{MM16} that
\begin{equation}\label{I3N2N1b}
|I_{3}(N)| \leq C \Big(\frac{N_{1}}{N_{0}}\Big)^{2} N_{2}^{s_{0} + \delta} \| u^{(1)}_{N_{1}}\|_{X^{0,b}} \| u^{(2)}_{N_{2}}\|_{X^{0,b}} \| u^{(0)}_{N_{0}}\|_{X^{0,b}}.
\end{equation}

Now, we can interpolate \eqref{I3N2N1a} and \eqref{I3N2N1b} as in \eqref{InterpolationBetweenI12andI22}, and use  Lemma \eqref{InterpolationLemma} to obtain
\begin{equation}\label{K3}
 |I_{3}(N)| \leq C \Big(\frac{N_{1}}{N_{0}}\Big)^{2} N_{2}^{s'} \| u^{(1)}_{N_{1}}\|_{X^{0,b'}} \| u^{(0)}_{N_{0}}\|_{X^{0,b'}} \| u^{(2)}_{N_{2}}\|_{X^{0,b'}}
\end{equation}
where we have $ s_{0} < s'$ and $b' < \frac{1}{2}$. 

Finally,  combining the estimates obtained for $I_{j}(N)$ $(j =1,2,3)$, in
 \eqref{K1}, \eqref{K2} and \eqref{K3}, we obtain the  required estimate  \eqref{SecondBoundForI1} as follows
\[
|I(N)| \leq |I_{1}(N)| + |I_{2}(N)| + |I_{3}(N)|\leq
 C \Big(\frac{N_{1}}{N_{0}}\Big)^{2} N_{2}^{s'} \| u^{(1)}_{N_{1}}\|_{X^{0,b'}} \| u^{(0)}_{N_{0}}\|_{X^{0,b'}} \| u^{(2)}_{N_{2}}\|_{X^{0,b'}},
\]
where $s_{0} < s'$ and  $b' < \frac{1}{2}$.
\end{proof}

Now, we come back to estimate the terms $\Sigma_{1} $ and $\Sigma_{2}$  using Lemma \ref{BoundForI1}.

\noindent{\bf Estimate  for $\Sigma_{1} $.}
 We saw in   Lemma \ref{BoundForI1} that, for a fixed $s>s_0$, one can find $s'$ with $s_{0} < s' <  s$ such that \eqref{FirstBoundForI1} holds true. Hence, one has
\begin{equation}\label{MM17}
\Sigma_{1} = \sum_{ N: N_{0} \leq C N_{1}, N_{2} \leq N_{1} } |I(N)|  \leq C  \sum_{ N: N_{0} \leq C N_{1} } N_{2}^{s'}  \|u^{(0)}_{N_{0}}\|_{X^{0, b_{1}}} \|u^{(1)}_{N_{1}}\|_{X^{0, b_{1}}} \|u^{(2)}_{N_{2}}\|_{X^{0, b_{1}}} .
\end{equation}
 Using the norm equivalence \eqref{XsbDyadicNormEquivalence}, we obtain from \eqref{MM17} that
\begin{equation}\label{MM18}
\begin{split}
\Sigma_{1} &\leq  C \sum_{ N: N_{0} \leq C N_{1} }  \Big(\frac{N_{0}}{N_{1}}\Big)^{s}   N_{2}^{s'- s} \|u^{(0)}_{N_{0}}\|_{X^{-s, b_{1}}}  \|u^{(1)}_{N_{1}}\|_{X^{s, b_{1}}}  \|u^{(2)}_{N_{2}}\|_{X^{s, b_{1}}}
\\
&=C \sum_{ N_{0}, N_{1}: N_{0} \leq C N_{1} } \Big(\frac{N_{0}}{N_{1}}\Big)^{s} \|u^{(0)}_{N_{0}}\|_{X^{-s, b_{1}}}  \|u^{(1)}_{N_{1}}\|_{X^{s, b_{1}}} \left(\sum_{N_{2}} \|u^{(2)}_{N_{2}}\|_{X^{s, b_{1}}}  N_{2}^{s'- s} \right).
\end{split}
\end{equation}

 Now, using Cauchy-Schwarz inequality in \eqref{MM18} and the norm  equivalence we find
\begin{equation}\label{MM19}
\Sigma_{1} \leq C  \|u_{2}\|_{X^{s, b_{1}}} \sum_{ N_{0}, N_{1}: N_{0} \leq C N_{1} }    \Big(\frac{N_{0}}{N_{1}}\Big)^{s} \|u^{(0)}_{N_{0}}\|_{X^{-s, b_{1}}}  \|u^{(1)}_{N_{1}}\|_{X^{s, b_{1}}}
\end{equation}

Thus, using \eqref{DyadicInequality} in  Lemma \ref{DyadicSummation}  about dyadic summations with $N = N_{0}$ and $N' = N_{1}$ in \eqref{MM19} we conclude that
\begin{equation}\label{Sigma1}
\Sigma_{1} \leq C  \|u_{2}\|_{X^{s, b_{1}}}  \|u_{1}\|_{X^{s, b_{1}}} \|u_{0}\|_{X^{-s, b_{1}}}.
\end{equation}

\noindent{\bf Estimate  for $\Sigma_{2} $.}
We use the estimate \eqref{SecondBoundForI1} and  the norm equivalence \eqref{XsbDyadicNormEquivalence}, to get
\begin{equation}\label{m-f10}
\begin{split}
 \Sigma_{2} = \sum_{ N: N_{0} > C N_{1}, N_{2} \leq N_{1} } |I(N)| & \leq C  \sum_{ N: N_{0} > C N_{1} } \Big(\frac{N_{1}}{N_{0}}\Big)^{2} N_{2}^{s'}  \|u^{(0)}_{N_{0}}\|_{X^{0, b'}} \|u^{(1)}_{N_{1}}\|_{X^{0, b'}} \|u^{(2)}_{N_{2}}\|_{X^{0, b'}}  \\
 & \leq C \sum_{ N: N_{0} > C N_{1} }  \Big(\frac{N_{1}}{N_{0}}\Big)^{2 - s}   N_{2}^{s'- s} \|u^{(0)}_{N_{0}}\|_{X^{-s, b'}}  \|u^{(1)}_{N_{1}}\|_{X^{s, b'}}  \|u^{(2)}_{N_{2}}\|_{X^{s, b'}}\\
 & = \small{ C \sum_{ N_{0}, N_{1}: N_{0} > C N_{1} }   \Big(\frac{N_{1}}{N_{0}} \Big)^{2 -s} \|u^{(0)}_{N_{0}}\|_{X^{-s, b'}}  \|u^{(1)}_{N_{1}}\|_{X^{s, b'}} \left(\sum_{N_{2}} \|u^{(2)}_{N_{2}}\|_{X^{s, b'}}  N_{2}^{s'- s} \right).}\\
\end{split}
\end{equation}

Applying the Cauchy-Schwarz inequality, we obtain from \eqref{m-f10}
\[
\Sigma_{2} \leq C  \|u_{2}\|_{X^{s, b'}} \sum_{ N_{0}, N_{1}: N_{0} > C N_{1} } \Big(\frac{N_{1}}{N_{0}} \Big)^{2-s} \|u^{(0)}_{N_{0}}\|_{X^{-s, b'}}  \|u^{(1)}_{N_{1}}\|_{X^{s, b'}}.
\]

Finally, using Lemma \ref{DyadicSummation}, similarly to \eqref{Sigma1} we conclude that
\begin{equation}\label{Sigma2}
\Sigma_{2} \leq C  \|u_{2}\|_{X^{s, b'}}  \|u_{1}\|_{X^{s, b'}} \|u_{0}\|_{X^{-s, b'}}.
\end{equation}

\noindent{\bf Estimate  for $\Sigma_{3} $ and $\Sigma_{4} $.}
Using symmetry, analogously to  $\Sigma_{1} $ and  $\Sigma_{2} $, one can obtain similar estimates for the terms $\Sigma_{3}$ and $\Sigma_{4}$.

Gathering estimates for  $\Sigma_{1} $,  $\Sigma_{2} $, $\Sigma_{3} $ and $\Sigma_{4} $ in \eqref{IMajorationSigma1234} we conclude the proof of the proposition.
\end{proof}

Now, we move to prove the  second crucial bilinear estimate stated in Proposition  \ref{SecondBilinearEstimate}.

\noindent
\begin{proof}[Proof of Proposition \ref{SecondBilinearEstimate}] By a duality argument, to prove \eqref{SecondBilinearEstimateInequality}, it  suffices to establish the following inequality
\begin{equation}\label{JDualityInequalityEquivalence}
|\int_{\mathbb{R} \times M} \overline{u_{1}} u_{2} u_{0}| \leq C \|u_{1}\|_{X^{s, b_{2}}} \|u_{2}\|_{X^{s, b_{2}}} \|u_{0}\|_{X^{-s, b}}.
\end{equation}
for all $u_{0} \in X^{-s, b}$. As in the proof of \eqref{FirstBilinearEstimateInequality} we use the dyadic decompositions $u_{j} = \sum_{j} u_{j N_{j}}$ ($j = 0, 1, 2$) in the left hand side of \eqref{JDualityInequalityEquivalence}. 

Let 
\begin{equation}\label{J-1}
J := \int_{\mathbb{R} \times M} \overline{u_{1}} u_{2} u_{0} dx dt, 
\end{equation}
and use triangle inequality to get
\begin{equation}\label{JSplit}
|J| \leq \sum_{N_{0}, N_{1}, N_{2}}  |\int_{\mathbb{R} \times M} \overline{u_{1 N_{1}}} u_{2N_{2}} u_{0N_{0}} |.
\end{equation}
 Observe that the summation in $(\ref{JSplit})$ is taken over all triples of dyadic numbers. For abbreviation, let  $N = (N_{0}, N_{1}, N_{2})$  and
\begin{equation}\label{MM33}
J(N) := \int_{\mathbb{R} \times M} \overline{u_{1 N_{1}}} u_{2N_{2}} u_{0N_{0}} dx dt.
\end{equation}
With these notations we write  $ \sum_{N} |J(N)|$ in the following manner
\begin{equation}\label{JFirstSlip}
 \sum_{N} |J(N)| \leq \sum_{N: N_{2} \leq N_{1}} |J(N)| + \sum_{N: N_{1} < N_{2}} |J(N)|.
\end{equation}

Here too, we split the sums in four  frequency regimes as we did  in the proof of \eqref{FirstBilinearEstimateInequality}. More precisely, we write
\begin{equation}\label{JSummationN2LeqN1}
 \sum_{N: N_{2} \leq N_{1}} |J(N)| \leq \sum_{N: N_{2} \leq N_{1}, N_{0} \leq C N_{1}}|J(N)| + \sum_{N: N_{2} \leq N_{1}, N_{0} > C N_{1}}|J(N)| =: \widetilde{\Sigma_{1}} + \widetilde{\Sigma_{2}},
\end{equation}
and
\begin{equation}\label{JSummationN1LeqN2}
  \sum_{N:  N_{1} < N_{2} } |J(N)| \leq \sum_{N: N_{1} < N_{2}, N_{0} \leq C N_{2}}|J(N)| + \sum_{N:  N_{1} < N_{2}, N_{0} > C N_{2}}|J(N)|=: \widetilde{\Sigma_{3}} + \widetilde{\Sigma_{4}}.
\end{equation}

Therefore, combining  \eqref{JSplit}, \eqref{JFirstSlip}, \eqref{JSummationN2LeqN1} and  \eqref{JSummationN1LeqN2} we arrive at
\begin{equation}\label{JMajorationSigma1234}
|J| \leq \widetilde{\Sigma_{1}} + \widetilde{\Sigma_{2}} + \widetilde{\Sigma_{3}} + \widetilde{\Sigma_{4}}.
\end{equation}

In this way, we reduced the proof to estimating the each term $\widetilde{\Sigma_{j}}$ ($j=1,2,3,4$). To simplify the exposition, let $u_{N_{j}}^{(j)}:= u_{jN_{j}}$ ($j = 0, 2$) and $u_{N_{1}}^{(1)}:= \overline{u_{1N_{1}}}$. We start estimating $\widetilde{\Sigma_{1}}$.

\noindent{\textbf{Estimate for the Term $\widetilde{\Sigma_{1}}$}.} In this case we have $N_{2} \leq N_{1}$; $N_{0} \leq C N_{1}.$
Consider the expression for  $J(N)$ in \eqref{MM33}. As was done to get \eqref{I1}, we use the  H\"older's inequality followed by the property  $(ii)$ in Proposition \ref{BasicXsb}, to obtain 
\begin{equation}\label{JN1}
   |J(N)| \leq C N_{2}^{3/2} \|u^{(0)}_{N_{0}}\|_{X^{0, \frac{1}{6}}}  \|u^{(1)}_{N_{1}}\|_{X^{0, \frac{1}{6}}} \|u^{(2)}_{N_{2}}\|_{X^{0, \frac{1}{6}}}.
\end{equation}

Next, as was done to  obtain  $(\ref{I2})$, we use the Cauchy-Schwarz inequality and Lemma \ref{LemaB} to find
\begin{equation}\label{JN2}
 |J(N)| \leq C N_{2}^{s_{0} + \delta} \|u^{(0)}_{N_{0}}\|_{X^{0, b}}  \|u^{(1)}_{N_{1}}\|_{X^{0,b}} \|u^{(2)}_{N_{2}}\|_{X^{0,b}}.
\end{equation}

 Now, decomposing each function $u_{N_{j}}^{(j)}$ in $(\ref{MM33})$ with respect to the time variable, we can consider
 \begin{equation}\label{J_NL}
 J(N) = \sum_{L} J(N,L),
 \end{equation}
 where
\begin{equation}\label{J_NL2}
\qquad J(N, L) := \int_{\mathbb{R} \times M} u^{(0)}_{N_{0}L_{0}} u_{N_{1}L_{1}}^{(1)} u_{N_{2}L_{2}}^{(2)} dx dt,
\end{equation}
 and the sum is taken over all dyadic integers $L=(L_0, L_1, L_2)$. 
 
 Observe that, the estimates  $(\ref{JN1})$ and $(\ref{JN2})$ also hold if one replaces $u^{(j)}_{N_{j}}$ by $u^{(j)}_{N_{j}L_{j}}$.  Now, using the norm equivalences $ \|u_{L_{j}}\|_{X^{0,b}} \simeq L_{j}^{b} \|u_{L_{j}}\|_{L^{2}(\mathbb{R} \times M)} $, we obtain from $(\ref{JN1})$ and $(\ref{JN2})$ with $u^{(j)}_{N_{j}L_{j}}$ replacing  $u^{(j)}_{N_{j}}$ that
\begin{equation}\label{JNL1}
   |J(N, L)| \leq C N_{2}^{3/2} (L_{0}L_{1}L_{2})^{\frac{1}{6}} \prod_{j = 0}^{2}\|u^{(j)}_{N_{j}L_{j}}\|_{L^{2}(\mathbb{R} \times M)},
\end{equation}
and
\begin{equation}\label{JNL2}
 |J(N, L)| \leq C N_{2}^{s_{0} + \delta} (L_{0}L_{1}L_{2})^{b} \prod_{j = 0}^{2}\|u^{(j)}_{N_{j}L_{j}}\|_{L^{2}(\mathbb{R} \times M)}.
\end{equation}

Interpolating the estimates  \eqref{JNL1} and \eqref{JNL2}, we obtain for  $0 < \theta < 1$
\begin{equation}\label{JNL1andJNL2interpolation1}
 |J(N, L)| \leq C N_{2}^{\frac{3 \theta}{2}  + (1 - \theta)(s_{0} + \delta)} (L_{0} L_{1} L_{2})^{\frac{\theta}{6} + (1- \theta )b} \prod_{j = 0}^{2}\|u^{(j)}_{N_{j}L_{j}}\|_{L^{2}(\mathbb{R} \times M)}.
\end{equation}
Consequently, using  the Lemma \ref{InterpolationLemma}, we get
\begin{equation}\label{JNL1andJNL2interpolation2}
 |J(N, L)| \leq C N_{2}^{s'} (L_{0}L_{1}L_{2})^{b'} \prod_{j = 0}^{2}\|u^{(j)}_{N_{j}L_{j}}\|_{L^{2}(\mathbb{R} \times M)},
\end{equation}
 where $s_{0} < s'$ and $b' < \frac{1}{2}$. 
 
 Hence, summing over all dyadic triples $L =(L_{0},L_{1},L_{2})$   and choosing $b_{1}$ such that $b' < b_{1} < \frac{1}{2}$,  we get from  \eqref{JNL1andJNL2interpolation2} that
\begin{equation}
|J(N)| \leq \sum_{L} |J(N,L)| \leq C N_{2}^{s'} \sum_{L}  (L_{0}L_{1}L_{2})^{b'} \prod_{j = 0}^{2}\|u^{(j)}_{N_{j}L_{j}}\|_{L^{2}(\mathbb{R} \times M)}.
\end{equation}

Now, using the norm equivalence \eqref{X0bNormEquivalenceA},  we get
\begin{equation}
\begin{split}
|J(N)|  &= C N_{2}^{s'} \sum_{L}  (L_{0}L_{1}L_{2})^{b'-b_{1}} \|u^{(0)}_{N_{0}L_{0}}\|_{X^{0,b_{1}}} \prod_{j= 1}^{2}\|u^{(j)}_{N_{j}L_{j}}\|_{X^{0,b_{1}}} \\
 &= C N_{2}^{s'} \sum_{L_{1},L_{2}}  (L_{1}L_{2})^{b'-b_{1}}  \prod_{j= 1}^{2}\|u^{(j)}_{N_{j}L_{j}}\|_{X^{0,b_{1}}} \left(\sum_{L_{0}} L_{0}^{b'-b_{1}} \|u^{(0)}_{N_{0}L_{0}}\|_{X^{0,b_{1}}}\right).
\end{split}
\end{equation}

Therefore, using the Cauchy-Schwarz inequality successively in the summations involving $L_{0}, L_{1}, L_{2}$, applying the norm equivalences \eqref{X0bNormEquivalenceA}, we obtain that for $s_{0} < s' $ there are numbers $b_{1} < \frac{1}{2}$ such that
\begin{equation}\label{JNfinal}
|J(N)| \leq C N_{2}^{s'} \|u^{(0)}_{N_{0}}\|_{X^{0, b_{1}}}  \prod_{j = 1}^{2}\|u^{(j)}_{N_{j}}\|_{X^{0, b_{1}}}   .
\end{equation}

Now,  summing according the regime $N_{2} \leq N_{1}$, $N_{0} \leq C N_{1}$, with $ s' <  s$, we have from \eqref{JNfinal} that
\begin{equation}\label{Case1bFinal}
\begin{split}
  \widetilde{\Sigma_{1}}  &\leq C   \sum_{ N: N_{0} \leq C N_{1} }   \frac{N_{0}^{s}}{N_{1}^{s}}   N_{2}^{s' -s} \|u^{(0)}_{N_{0}}\|_{X^{-s, b_{1}}}  \|u^{(1)}_{N_{1}}\|_{X^{s, b_{1}}}  \|u^{(2)}_{N_{2}}\|_{X^{s,b_{1}}} \\
  & \leq C \sum_{ N: N_{0} \leq C N_{1} }   \Big(\frac{N_{0}}{N_{1}} \Big)^{s}   N_{2}^{s' -s} \|u^{(0)}_{N_{0}}\|_{X^{-s, b_{1}}}  \|u^{(1)}_{N_{1}}\|_{X^{s, b_{1}}}  \|u^{(2)}_{N_{2}}\|_{X^{s,b_{1}}}.
\end{split}
\end{equation}

Thus, applying the Cauchy-Schwarz inequality in the summation in $N_{2}$, we obtain
\begin{equation}\label{MMM1}
\begin{split}
\widetilde{\Sigma_{1}} &\leq C \sum_{ N_{0},N_{1}: N_{0} \leq C N_{1} }   \Big(\frac{N_{0}}{N_{1}}\Big)^{s}  \|u^{(0)}_{N_{0}}\|_{X^{-s, b_{1}}}  \|u^{(1)}_{N_{1}}\|_{X^{s, b_{1}}}  \left(\sum_{N_{2}} N_{2}^{s'-s}  \|u^{(2)}_{N_{2}}\|_{X^{s,b_{1}}}\right)
\\
&\leq C  \|u_{2}\|_{X^{s,b_{1}}} \sum_{ N_{0},N_{1}: N_{0} \leq C N_{1} }   \Big(\frac{N_{0}}{N_{1}} \Big)^{s}  \|u^{(0)}_{N_{0}}\|_{X^{-s, b_{1}}}  \|u^{(1)}_{N_{1}}\|_{X^{s, b_{1}}}.
\end{split}
\end{equation}

Finally, in view of  Lemma \ref{DyadicSummation}, we obtain from \eqref{MMM1} that
\[
\widetilde{\Sigma_{1}} \leq C \|u_{0}\|_{X^{-s, b_{1}}}  \|u_{1}\|_{X^{s, b_{1}}}   \|u_{2}\|_{X^{s,b_{1}}}.
\]

\noindent{\textbf{Estimate for the Term $\widetilde{\Sigma_{2}}$}.} In this case we have $N_{2} \leq N_{1}$; $N_{0} > C N_{1}$.

In the same way we did in the proof of \eqref{FirstBilinearEstimateInequality}, (see Lemma \ref{BoundForI1}) we split the integral $J$ into three terms and analyze each one of them separately. More precisely, we write
\[
J(N) = J_{1}(N) + J_{2}(N) + J_{3}(N),
\]
where
\begin{equation}\label{SplitIntegralJ}
\begin{cases}
 J_{1}(N) := - \frac{1}{N_{0}^{2}} \int_{\mathbb{R} \times M}  T(u^{(0)}_{N_{0}}) u^{(1)}_{N_{1}} \Delta_{g}( u^{(2)}_{N_{2}})  dx dt ,\\
 J_{2}(N) := - \frac{1}{N_{0}^{2}} \int_{\mathbb{R} \times M}  T(u^{(0)}_{N_{0}}) u^{(2)}_{N_{2}} \Delta_{g}( u^{(1)}_{N_{1}})  dx dt , \\
 J_{3}(N) := - \frac{2}{N_{0}^{2}} \int_{\mathbb{R} \times M}  T(u^{(0)}_{N_{0}}) (\nabla u^{(1)}_{N_{1}},\nabla u^{(2)}_{N_{2}})_{g}  dx dt.
\end{cases}
\end{equation}

We can obtain the following estimates for $J_{k}(N)$.
\begin{equation}\label{J1a}
 |J_{k}(N)| \leq \Big(\frac{N_{1}}{N_{0}}\Big)^{2} N_{2}^{\frac{3}{2}} \prod_{j=0}^{2}\|u^{(j)}_{N_{j}}\|_{X^{0, 1/6}},
\end{equation}
and
\begin{equation}\label{J12a}
 |J_{k}(N)| \leq \Big(\frac{N_{1}}{N_{0}}\Big)^{2} N_{2}^{s_{0} + \delta} \prod_{j=0}^{2}\|u^{(j)}_{N_{j}}\|_{X^{0,b}},
\end{equation}
 where $k = 1,2,3$. We can use the same estimates and considerations as we did to estimate the terms $I_{k}$, $k = 1,2,3$, in Lemma $\ref{BoundForI1}$. More explicitly:
\begin{enumerate}
\item[$\bullet$] For $k = 1$ we use the same arguments that were used in the estimates $(\ref{I1N2N1a})$ and $(\ref{I1N2N1b})$.
\item[$\bullet$]  For $k = 2$ we use the same arguments that were used in the estimates  $(\ref{I2N2N1a})$ and $(\ref{I2N2N1b})$.
\item[$\bullet$] For $k = 3$ we use the same arguments that were used in the estimates  $(\ref{I3N2N1a})$ and $(\ref{I3N2N1b})$.
\end{enumerate}

Hence, interpolating $(\ref{J1a})$ and $(\ref{J12a})$, for each $k = 1, 2,3$ we see that for $s_{0} < s'$, and  $b' < \frac{1}{2}$ one has
\begin{equation}\label{MMM2}
|J(\underline{N})| \leq \sum_{k =1}^{3} |J_{k}(\underline{N})| \leq C  \Big(\frac{N_{1}}{N_{0}} \Big)^{2}  N_{2}^{s'-s} \frac{N_{0}^{s}}{N_{1}^{s}} \|u^{(0)}_{N_{0}}\|_{X^{-s, b'}} \|u^{(1)}_{N_{1}}\|_{X^{s, b'}}  \|u^{(2)}_{N_{2}}\|_{X^{s,b'}}.
\end{equation}
Hence, summing in  $N = (N_{0}, N_{1}, N_{2})$, we get
\begin{equation}\label{Case2bFinal}
\begin{split}
\widetilde{\Sigma_{2}}  &\leq C \sum_{N: N_{0} > C N_{1}} \Big(\frac{N_{1}}{N_{0}}\Big)^{2 - s} N_{2}^{s' - s} \|u^{(0)}_{N_{0}}\|_{X^{-s, b'}} \|u^{(1)}_{N_{1}}\|_{X^{s, b'}}  \|u^{(2)}_{N_{2}}\|_{X^{s,b'}}    \\
 &\leq C \|u_{0}\|_{X^{-s, b'}}  \|u_{1}\|_{X^{s, b'}}   \|u_{2}\|_{X^{s,b'}}. \\
\end{split}
\end{equation}

\noindent{\bf Estimate for the terms $\widetilde{\Sigma_{3}}$ and $\widetilde{\Sigma_{4}}$.} By a symmetry argument we can prove the same 
estimates for $\widetilde{\Sigma_{3}}:N_{1} < N_{2}$; $N_{0} \leq C N_{2}.$
and $\widetilde{\Sigma_{4}}:N_{1} < N_{2}$; $N_{0} > C N_{2}.$

Finally, collecting the estimates established in $(\ref{Case1bFinal})$, $(\ref{Case2bFinal})$ and for the summations $\widetilde{\Sigma_{j}}$ $(j = 3, 4)$ we obtain the required estimate
\begin{equation*}
\begin{split}
|J| &\leq \sum_{N}|J(N)| \leq \widetilde{\Sigma_{1}} + \widetilde{\Sigma_{2}}  + \widetilde{\Sigma_{3}}  + \widetilde{\Sigma_{4}}\\
&\leq C \|u_{0}\|_{X^{-s, b}} \|u_{1}\|_{X^{s, b_{2}}}  \|u_{2}\|_{X^{s,b_{2}}},
\end{split}
\end{equation*}
where   $b_{2} > b_{1}, b'$ are chosen in a suitable manner.
\end{proof}

\section{Proof the main result}
In this section we use the estimates obtained in the previous section to prove the local well-posedness stated in Theorem \ref{local-Th} for the IVP \eqref{SDS1} . 

\noindent
\begin{proof}[Proof of Theorem \ref{local-Th}] Let  $s>\frac23$ and  $u_{0} \in H^{s}(M)$. Applying  Duhamel's formula, we can rewrite the IVP \eqref{SDS1} in  the following equivalent  integral equation  (with Dirichlet or Neumann conditions)
\begin{equation}\label{SDS-0}
 u(t) = e^{it \Delta} u_{0} - i \int_{0}^{t} e^{i (t - \tau) \Delta}  Q(u(\tau), \overline{u}(\tau)) d \tau,
\end{equation}
where $e^{it \Delta}$ denotes the evolution of the linear Schr\"odinger equation defined using Dirichlet or Neumann spectral resolution and $  Q(u, \overline{u}):=\alpha  u^{2} + \beta  \overline{u}^2 + \gamma |u|^2$. 

We define an application 
\begin{equation}\label{SDS2}
 \Phi(u)(t) := e^{it \Delta} u_{0} - i \int_{0}^{t} e^{i (t - \tau) \Delta}  Q(u(\tau), \overline{u}(\tau)) d \tau,
\end{equation}
 and  use the contraction mapping principle to find a fixed point $u$ that solves the equation \eqref{SDS-0}.
 
 For this,  let $T>0$ and $R>0$ to be chosen later and consider a ball 
\[
B_{T}^R := \{ u \in X^{s,b}_{T} ; \|u\|_{X_{T}^{s, b}} \leq R \}
\]
 in the space $ X^{s,b}_{T} $. We will show that for  sufficiently small  $T>0$ and an appropriate positive constant $R > 0$, the application $\Phi$ defined in \eqref{SDS2} is a contraction map.  In fact,  applying the linear estimates \eqref{XsbLinearEstimateA} and \eqref{XsbLinearEstimateB} from Proposition \ref{LinearEstimates1} in  \eqref{SDS2}, we obtain for $T\leq 1$
\begin{equation}\label{Pf-1}
\|\Phi (u)\|_{X_{T}^{s,b}} \leq c_{0} \|u_{0}\|_{H^{s}(M)} + c_{1} T^{1 - b - b'} \|Q(u, \overline{u}) \|_{X_{T}^{s, -b'}}.
\end{equation}

Using the bilinear estimates  \eqref{FirstBilinearEstimateInequality} and \eqref{SecondBilinearEstimateInequality}, we get from \eqref{Pf-1}
\begin{equation}\label{Pf-2}
\|\Phi (u)\|_{X_{T}^{s,b}} \leq c_{0} \|u_{0}\|_{H^{s}(M)} +  c_{1}T^{1 - b - b'} \|u\|_{X_{T}^{s,b} }^{2}.
\end{equation}

Set   $\theta_{1}:= 1-b-b' > 0$ and  $R :=  2 c_{0} \|u_{0}\|_{H^{s}(M)} $. Therefore, for $u \in B_{T}^R$ the estimate \eqref{Pf-2} yields
\begin{equation}\label{Pf-3}
\|\Phi(u)\|_{X_{T}^{s,b}} \leq \frac{R}{2} + c_{1} T^{\theta_{1}} {R}^{2}. 
\end{equation}

Hence, for a suitable $0 < T\leq 1$ such that $c_1T^{\theta_1}R<\frac12$, one can conclude that  $\Phi$ maps $B_{T}^R$ onto itself. 

Let $u, \tilde{u}\in B_T^R$ be  solutions with the same initial data $u_0$. In an analogous manner as we did above, it is easy to get
\begin{equation}\label{Pf-4}
\|\Phi (u) -\Phi (\tilde{u}) \|_{X_{T}^{s,b}} \leq C  T^{1 - b - b'} \|Q(u, \tilde{u})- Q(\tilde{u}, \overline{\tilde{u}}) \|_{X_{T}^{s,-b'}}.
\end{equation}

Observe that, we can write 
 \begin{equation}\label{Decom}
\begin{cases}
\overline{u}^2 - \overline{\tilde{u}}^2 &= \overline{(u - \tilde{u})} \overline{u} + \overline{(u - \tilde{u})} \overline{\tilde{u}}, \\
u^2 - \tilde{u}^2 &= ( u - \tilde{u}) u + (u - \tilde{u}) \tilde{u}, \\
|u|^{2} - |\tilde{u}|^{2} &= u (\overline{u - \tilde{u}}) + (u - \tilde{u}) \overline{\tilde{u}}.
\end{cases}
\end{equation}

Now, using \eqref{Decom} in \eqref{Pf-4}  and then applying the  bilinear estimates  \eqref{FirstBilinearEstimateInequality} and \eqref{SecondBilinearEstimateInequality}, we obtain
\begin{equation}\label{Pf-5}
\|\Phi(u) -\Phi(\tilde{u}) \|_{X_{T}^{s, b}}  
\leq C T^{1 - b- b'} (\|u \|_{X_{T}^{s,b} } + \|\tilde{u} \|_{X_{T}^{s,b}}) \|  u - \tilde{u}\|_{X_{T}^{s,b}} \leq CT^{\theta_1}R \|  u - \tilde{u}\|_{X_{T}^{s,b}}.
\end{equation}

If we choose $0<T\leq 1$ such that $\max\{c_1T^{\theta_1}R, \;CT^{\theta_1}R\}<\frac12$, it follows from \eqref{Pf-5} that $\Phi$ is a  contraction on the ball $B_T^R$. The Lipschitz property is obtained with a similar idea, so the details are omitted.
\end{proof}

\begin{obs}
Let $B := \mathbb{B}^{3}$ the unit ball in $\mathbb{R}^{3}$, denote the Laplacean in $B$ by $\Delta: = \Delta_{B}$ and consider the linear Schr\"odinger group $S(t) = e^{i t \Delta_{B}}$.  Anton in  \cite{RAMONA} considered radial data $u_{0}, v_{0}$  spectrally  localized at frequency $\Gamma, \Lambda$ respectively  to prove 
\[
\| S(t) u_{0} S(t) v_{0}\|_{L^{2}((0,1) \times B)} \leq C (\min (\Gamma, \Lambda))^{s} \|u_{0}\|_{L^{2}(B)} \|v_{0}\|_{L^{2}(B)},
\]
and
\[
\|(\nabla S(t) u_{0}) S(t) v_{0}\|_{L^{2}((0,1) \times B)} \leq C \Gamma(\min (\Gamma, \Lambda))^{s} \|u_{0}\|_{L^{2}(B)} \|v_{0}\|_{L^{2}(B)},
\]
for any $s>\frac12$.

With these estimates at hand, we can obtain the  bilinear estimates established in Propositions \ref{FirstBilinearEstimate} and \ref{SecondBilinearEstimate} for the quadratic NLS \eqref{SDS1} posed on $\mathbb{B}^{3}$ as well. Consequently, as in \cite{RAMONA}, we can also  establish a local well-posedness result for the quadratic NLS equation  \eqref{SDS1} with radial data in $H^{s}(\mathbb{B}^3)$ for  $s> 1/2$.
\end{obs}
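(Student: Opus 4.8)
The plan is to transplant, essentially verbatim, the proofs of Propositions~\ref{FirstBilinearEstimate} and~\ref{SecondBilinearEstimate} and of Theorem~\ref{local-Th} to the setting $M=\mathbb{B}^{3}$, working throughout on the closed subspace of radial functions and with the threshold $s_{0}=\frac12$ in place of $s_{0}=\frac23$, using Anton's estimates from \cite{RAMONA} (recalled in the statement) in the role played by Jiang's Lemmas~\ref{LemaA}--\ref{LemaD}. First I would record the ingredients needed to run the machinery of Section~3. (a) The $X^{0,b}$ transference of Anton's estimates, i.e. the exact analogues of Lemmas~\ref{LemaB} and~\ref{LemaD} with $s_{0}=\frac12$, obtained by the standard argument used in \cite{JIANG} that upgrades a space--time Strichartz-type bound to an $X^{0,b}$ bound through the Fourier representation in the modulation variable; this argument is insensitive to dimension and to the radial restriction, since $e^{it\Delta_{B}}$ preserves radial functions and hence the radial part of $X^{0,b}(\mathbb{R}\times\mathbb{B}^{3})$. (b) The embedding $X^{0,1/6}(\mathbb{R}\times\mathbb{B}^{3})\hookrightarrow L^{3}(\mathbb{R};L^{2}(\mathbb{B}^{3}))$ (Proposition~\ref{BasicXsb}(ii), valid on any compact Riemannian manifold) and the dimension--three Sobolev/Bernstein bound $\|u_{N}\|_{L^{\infty}(\mathbb{B}^{3})}\lesssim N^{3/2+}\|u_{N}\|_{L^{2}(\mathbb{B}^{3})}$ for spectrally localized $u_{N}$, the arbitrarily small loss being harmless since $\frac32>\frac12$.

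Next I would reprove Lemma~\ref{BoundForI1} in this setting. The first bound is produced exactly as in \eqref{I1}: H\"older in the form $\|u_{2}\|_{L^{3}_{t}L^{\infty}_{x}}\|u_{0}\|_{L^{3}_{t}L^{2}_{x}}\|u_{1}\|_{L^{3}_{t}L^{2}_{x}}$, the bound $N_{2}^{3/2}$ on $\|u_{2}\|_{L^{\infty}_{x}}$, and $X^{0,1/6}\hookrightarrow L^{3}_{t}L^{2}_{x}$; the second is produced as in \eqref{I2} by Cauchy--Schwarz followed by the radial bilinear estimate of \cite{RAMONA}, giving the power $N_{2}^{1/2+\delta}$ in the space $X^{0,b}$, $b>\frac12$. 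One then decomposes in the modulation variables, interpolates the two bounds, and invokes the analogue of Lemma~\ref{InterpolationLemma} with $s_{0}=\frac12$: because $\frac32>\frac12$, the inequalities $s'>\frac{3\theta}{2}+(s_{0}+\delta)(1-\theta)$ and $b'>\frac{\theta}{6}+(\frac12+\varepsilon)(1-\theta)$ remain solvable for $\theta,\varepsilon$ small with $b'<\frac12$, so one lands at $N_{2}^{s'}(L_{0}L_{1}L_{2})^{b'}$ and, after Cauchy--Schwarz in the $L_{j}$'s, at \eqref{FirstBoundForI1}. For the high--$N_{0}$ bound \eqref{SecondBoundForI1} one repeats the mechanism of Lemma~\ref{BoundForI1}: write $u^{(0)}_{N_{0}}=-N_{0}^{-2}\Delta_{g}T(u^{(0)}_{N_{0}})$, integrate by parts via Green's theorem (the boundary terms vanish under Dirichlet or Neumann conditions), expand $\Delta_{g}(u^{(1)}_{N_{1}}u^{(2)}_{N_{2}})$ by \eqref{LaplacianOfProduct}, and estimate the three pieces $I_{1},I_{2},I_{3}$ as before, the bare--Laplacian pieces using $\|\Delta_{g}u^{(j)}_{N_{j}}\|\sim N_{j}^{2}\|u^{(j)}_{N_{j}}\|$ together with the radial bilinear estimate, and $I_{3}$ using $\|\nabla u_{N}\|_{L^{2}}\lesssim N\|u_{N}\|_{L^{2}}$ together with the radial gradient bilinear estimate of \cite{RAMONA}. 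With \eqref{FirstBoundForI1} and \eqref{SecondBoundForI1} at hand, the dyadic sums $\Sigma_{1},\dots,\Sigma_{4}$ (and the tilded analogues arising for $u_{1}\overline{u_{2}}$) are summed by Lemma~\ref{DyadicSummation} exactly as in Section~3, producing the two bilinear estimates on the radial subspace of $X^{s,b}(\mathbb{R}\times\mathbb{B}^{3})$ for every $s>\frac12$.

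Finally I would run the contraction argument of the proof of Theorem~\ref{local-Th} with no change: the linear estimates of Proposition~\ref{LinearEstimates1} hold on $\mathbb{B}^{3}$, the bilinear estimates just obtained control $\|Q(u,\overline u)\|_{X^{s,-b'}_{T}}$, and for $s>\frac12$ and $T$ small $\Phi$ is a contraction on a small ball of $X^{s,b}_{T}(\mathbb{R}\times\mathbb{B}^{3})$. The one extra verification is that $\Phi$ preserves the radial subspace, which is immediate: $e^{it\Delta_{B}}$ maps radial to radial, $Q(u,\overline u)$ is radial whenever $u$ is, and Duhamel's integral preserves radiality; hence the fixed point is radial. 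The main obstacle is purely this bookkeeping — one must check that every function entering the multilinear expressions, after application of the operators $T$, $V$, the gradient, and the spectral truncations, remains radial, so that Anton's estimates, known only for radial data, are legitimately applicable at each step; once this is confirmed, no genuinely new analytic difficulty appears, which is why the assertion is recorded as a remark rather than as a theorem.
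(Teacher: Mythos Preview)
Your proposal is correct and follows exactly the approach the paper intends: the remark itself contains no proof beyond the assertion that Anton's bilinear and gradient--bilinear estimates on $\mathbb{B}^{3}$ (with threshold $s_{0}=\tfrac12$) can be substituted for Jiang's Lemmas~\ref{LemaA}--\ref{LemaD}, after which the machinery of Section~3 and the contraction argument of Section~4 run unchanged. Your outline supplies precisely the bookkeeping the remark leaves implicit---the $X^{0,b}$ transference, the dimension-three Sobolev exponent $3/2+$, the recomputation of Lemma~\ref{InterpolationLemma} with $s_{0}=\tfrac12$, and the verification that radiality is preserved by $e^{it\Delta_{B}}$, by $Q$, and by the operators $T,V$, spectral projectors, and $\nabla$---and identifies correctly that none of these adjustments introduces a new analytic difficulty.
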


\section{Appendix}
In this appendix, we will  prove the crucial duality argument used in \eqref{DualityInequalityEquivalence}.  The aim is to prove that there exists an isometric isomorphism $\Phi: X^{-s, -b}(\mathbb{R} \times M) \rightarrow (X^{s,b}(\mathbb{R} \times M))^{\ast}$ such that 
$ \| \Phi(f)\| = \| f \|_{X^{-s, -b}}$ . For this, we need to introduce some definitions and notations. 
To begin, let us define (for $f \in C^{\infty}_{0} (\mathbb{R} \times M)$)
\[
J^{s} f = \sum_{k} \langle \mu_{k} \rangle^{s/2} P_{k} f ,
\]
and via Fourier transform, 
\[
\Lambda_{k}^{b} f (t) = (\langle \tau + \mu_{k} \rangle^{b} \hat{f}\check{)}(t).
\]

In this manner, we can define the operator 
\[
J^{s} \Lambda^{b} f := \sum_{k} \langle \mu_{k} \rangle^{s/2} P_{k} [\Lambda_{k}^{b} f (t)].
\]
Using this definition and considering $u \in X^{s,b} \cap L^{2}_{tx}$, $v \in X^{-s,-b} \cap L^{2}_{tx}$, we have
\[
\langle J^{s} \Lambda^{b} v , J^{-s} \Lambda^{-b} u \rangle_{L^{2}_{tx}} = \langle v(t) , u(t) \rangle_{L^{2}_{tx}}. 
\]
In fact, using $L^{2}$-orthogonality and Plancherel's theorem, we obtain 
\begin{equation}
\begin{split}
\langle J^{s} \Lambda^{b} v , J^{-s} \Lambda^{-b} u \rangle_{L^{2}_{tx}} &= \sum_{k} \int_{\mathbb{R} \times M}  P_{k}[\Lambda_{k}^{b} v (t)] \overline{P_{k}[\Lambda_{k}^{-b} u (t)]} dg dt \\
&= \sum_{k} \int_{\mathbb{R} \times M}  P_{k}[\langle \tau + \mu_{k} \rangle ^{b} \hat{v} (\tau)\check{]}(t) \overline{P_{k}[\langle \tau + \mu_{k} \rangle^{-b} \hat{u}(\tau) \check{]} (t) }dg dt \\
&= \sum_{k} \int_{\mathbb{R} \times M}  P_{k}[\langle \tau + \mu_{k} \rangle ^{b} \hat{v} (\tau)] \quad{} \overline{P_{k}[\langle \tau + \mu_{k} \rangle^{-b} \hat{u}(\tau) ]  }dg d\tau \\
&= \sum_{k} \int_{\mathbb{R} \times M}  P_{k}\hat{v} (\tau) \quad{} \overline{P_{k} \hat{u}(\tau)   }dg d\tau.
\end{split}
\end{equation}

Now, using that $\widehat{P_{k} f } = P_{k} \hat{f}$, we conclude 
\begin{equation}
\begin{split}
\langle J^{s} \Lambda^{b} v , J^{-s} \Lambda^{-b} u \rangle_{L^{2}_{tx}} &= \sum_{k} \int_{\mathbb{R} \times M}  \widehat{P_{k}v} (\tau) \quad{} \overline{ \widehat{P_{k} u}(\tau)   }dg d\tau \\
& = \sum_{k} \int_{\mathbb{R} \times M}  \widehat{P_{k}v} (\tau)  \widehat{ \overline {P_{k} u}}(-\tau) dg d\tau \\
 & = \sum_{k} \int_{ M} \int_{\mathbb{R}} P_{k}v(t)    \overline {P_{k} u}(t) dg dt \\
& = \sum_{k}  \int_{\mathbb{R}} \langle P_{k}v(t)    P_{k} u(t) \rangle_{L^{2}(M)} dt \\
 & =   \langle v(t), u(t) \rangle_{L^{2}( \mathbb{R} \times M)} . 
\end{split}
\end{equation}

Now, we can state the following lemma. 
\begin{lema}
Let  $\langle \cdot, \cdot \rangle $ denotes an inner product in $L^{2}_{tx}$. 
 Let $\Phi: X^{-s, -b}(\mathbb{R} \times M) \rightarrow (X^{s,b}(\mathbb{R} \times M))^{\ast}$ be defined by 
\[
\Phi_{h} (f) = \langle  J^{s} \Lambda^{b} f, J^{-s} \Lambda^{-b}  h \rangle
\]
Then $\Phi$ is an isometric isomorphism and we have 
$\Phi_{h}(f) = \langle f, h \rangle$, whenever 
$f \in X^{s, b} \cap L^{2}_{tx}$ and $h \in X^{-s, -b} \cap L^{2}_{tx}$. 
\end{lema}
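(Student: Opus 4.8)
The plan is to reduce everything to the single observation that the (formal) operators $J^{s}\Lambda^{b}$ and $J^{-s}\Lambda^{-b}$ are mutually inverse isometric isomorphisms between the Bourgain spaces and the ``flat'' space $L^{2}_{tx}:=L^{2}(\mathbb{R}\times M)$. Concretely, for $u\in C_{0}^{\infty}(\mathbb{R}\times M)$ the time Fourier transform of $J^{s}\Lambda^{b}u$ equals $\sum_{k}\langle\mu_{k}\rangle^{s/2}\langle\tau+\mu_{k}\rangle^{b}\widehat{P_{k}u}(\tau)$, so Plancherel's theorem together with the $L^{2}$-orthogonality of the $P_{k}$ and the definition \eqref{XsbNorm} give $\|J^{s}\Lambda^{b}u\|_{L^{2}_{tx}}=\|u\|_{X^{s,b}}$; by density this extends to an isometry on all of $X^{s,b}$, whose formal inverse $J^{-s}\Lambda^{-b}$ is an isometry $L^{2}_{tx}\to X^{s,b}$, so $J^{s}\Lambda^{b}:X^{s,b}\to L^{2}_{tx}$ is onto. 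The same reasoning with $(s,b)$ replaced by $(-s,-b)$ shows that $J^{-s}\Lambda^{-b}:X^{-s,-b}(\mathbb{R}\times M)\to L^{2}_{tx}$ is an isometric isomorphism as well.

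Next I would prove that $\Phi$ is an isometry. For $h\in X^{-s,-b}$ and $f\in X^{s,b}$, Cauchy--Schwarz in $L^{2}_{tx}$ gives
\[
|\Phi_{h}(f)|=|\langle J^{s}\Lambda^{b}f,\;J^{-s}\Lambda^{-b}h\rangle|\le\|J^{s}\Lambda^{b}f\|_{L^{2}_{tx}}\|J^{-s}\Lambda^{-b}h\|_{L^{2}_{tx}}=\|f\|_{X^{s,b}}\|h\|_{X^{-s,-b}},
\]
so $\Phi_{h}\in(X^{s,b})^{*}$ with $\|\Phi_{h}\|\le\|h\|_{X^{-s,-b}}$. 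For the reverse inequality I would test against the unique $f_{0}\in X^{s,b}$ with $J^{s}\Lambda^{b}f_{0}=J^{-s}\Lambda^{-b}h$ (which exists by the first step): then $\|f_{0}\|_{X^{s,b}}=\|h\|_{X^{-s,-b}}$ and $\Phi_{h}(f_{0})=\|J^{-s}\Lambda^{-b}h\|_{L^{2}_{tx}}^{2}=\|h\|_{X^{-s,-b}}^{2}$, whence $\|\Phi_{h}\|\ge\|h\|_{X^{-s,-b}}$. Hence $\|\Phi_{h}\|=\|h\|_{X^{-s,-b}}$, and in particular $\Phi$ is injective. Since the second slot of $\langle\cdot,\cdot\rangle$ is conjugate-linear, $\Phi$ is conjugate-linear; this is the usual convention in the duality of complex Hilbert spaces, and the statement is to be read that way.

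To see that $\Phi$ is onto, take $L\in(X^{s,b})^{*}$. Then $L\circ J^{-s}\Lambda^{-b}\in(L^{2}_{tx})^{*}$, so by the Riesz representation theorem there is $g\in L^{2}_{tx}$ with $L\big(J^{-s}\Lambda^{-b}w\big)=\langle w,g\rangle$ for all $w\in L^{2}_{tx}$. Setting $h:=J^{s}\Lambda^{b}g\in X^{-s,-b}$, one has $J^{-s}\Lambda^{-b}h=g$, and therefore for every $f\in X^{s,b}$,
\[
L(f)=L\big(J^{-s}\Lambda^{-b}(J^{s}\Lambda^{b}f)\big)=\langle J^{s}\Lambda^{b}f,\;g\rangle=\langle J^{s}\Lambda^{b}f,\;J^{-s}\Lambda^{-b}h\rangle=\Phi_{h}(f),
\]
so $L=\Phi_{h}$ and $\Phi$ is an isometric isomorphism. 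Finally, the identity $\Phi_{h}(f)=\langle f,h\rangle$ for $f\in X^{s,b}\cap L^{2}_{tx}$ and $h\in X^{-s,-b}\cap L^{2}_{tx}$ is exactly the computation already displayed just before the statement of the lemma, namely $\langle J^{s}\Lambda^{b}v,\,J^{-s}\Lambda^{-b}u\rangle_{L^{2}_{tx}}=\langle v,u\rangle_{L^{2}(\mathbb{R}\times M)}$, applied with $v=f$ and $u=h$.

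The main obstacle is the bookkeeping in the first step: one must make sure that the formal multipliers $J^{s}\Lambda^{b}$, a priori defined only on $C_{0}^{\infty}(\mathbb{R}\times M)$, genuinely extend to bounded, bijective isometries between the completions $X^{s,b}$ and $L^{2}_{tx}$, and that $J^{s}\Lambda^{b}$ and $J^{-s}\Lambda^{-b}$ really are inverse to one another on these completions. Once this is in place, the remaining steps are routine applications of the Cauchy--Schwarz inequality and the Riesz representation theorem, with only the sesquilinearity convention needing mild attention.
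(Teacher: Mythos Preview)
Your proposal is correct and follows essentially the same route as the paper: Cauchy--Schwarz for $\|\Phi_{h}\|\le\|h\|_{X^{-s,-b}}$, a direct computation (equivalently, your choice of the extremizer $f_{0}$) for the reverse inequality, Riesz representation composed with $J^{-s}\Lambda^{-b}$ for surjectivity, and the pre-lemma identity for $\Phi_{h}(f)=\langle f,h\rangle$. Your explicit first step, spelling out that $J^{s}\Lambda^{b}:X^{s,b}\to L^{2}_{tx}$ is an isometric isomorphism, and your remark on conjugate-linearity are welcome clarifications that the paper leaves implicit.
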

\begin{proof}
For $f \in X^{s,b}$ and $h \in X^{-s, -b}$, we have 
\begin{equation}
\begin{split}
|\Phi_{h} (f)| &= | \langle  J^{s} \Lambda^{b} f, J^{-s} \Lambda^{-b}  h \rangle| \\
&\leq \| J^{s} \Lambda^{b} f\|_{L^{2}}  \| J^{-s} \Lambda^{-b} h\|_{L^{2}} \\
&= \| f\|_{X^{s,b}} \| h\|_{X^{-s,-b}} .
\end{split}
\end{equation}
Hence $\Phi_{h} \in (X^{s,b})^{\ast}$ with $\| \Phi_{h} \| \leq \|h \|_{X^{-s, -b}}$. Moreover,
\begin{equation}
\begin{split}
\|\Phi_{h} \| & = \sup_{\| f \|_{X^{s,b}} \leq 1} |\langle  J^{s} \Lambda^{b} f, J^{-s} \Lambda^{-b}  h \rangle| \\
&= \sup_{\| \ell \|_{X^{s,b}} \leq 1} |\langle  \ell, J^{-s} \Lambda^{-b}  h \rangle| \\
&=\| J^{-s} \Lambda^{-b}  h \|_{L_{tx}^{2}} \\
&=\| h \|_{X^{-s, -b}}. 
\end{split}
\end{equation}

It remains to show that $\Phi$ is onto. Let $y$ be a bounded linear functional on $X^{s,b}$. Then 
$z = y \circ J^{-s} \Lambda^{-b} $ is a bounded linear functional on $L^{2}_{tx}$ and by the Riesz's representation theorem there exists $\tilde{h} \in L^{2}_{tx}$ with $z(\tilde{f}) = \langle \tilde{f}, \tilde{h} \rangle$ for all $\tilde{f} \in L^{2}_{tx}$. Now, note that $h: = J^{s} \Lambda^{b} \tilde{h}$ belongs to $X^{-s, -b}$ and it is easy to show that $y(f) = \Phi_{h} (f)$ for all $f \in X^{s, b}$. Finally, let $f \in X^{s, b} \cap L^{2}_{tx}$ and $h \in X^{-s, -b} \cap L^{2}_{tx}$. From the above computations we have
\[
\langle f, h \rangle = \langle J^{s} \Lambda^{b} f, J^{-s} \Lambda^{-b} h \rangle, 
\]
and the proof is completed. 
\end{proof}

\bibliographystyle{plain}

\end{document}